\providecommand{\algorithmname}{Algorithm}
  \theoremstyle{definition}
  \newtheorem{defn}{\protect\definitionname}
  \theoremstyle{remark}
  \newtheorem{rem}{\protect\remarkname}
  \theoremstyle{plain}
  \newtheorem{lem}{\protect\lemmaname}
\theoremstyle{plain}
\newtheorem{thm}{\protect\theoremname}
  \theoremstyle{plain}
  \newtheorem{prop}{\protect\propositionname}
  \theoremstyle{plain}
  \newtheorem{cor}{\protect\corollaryname}
\date{}
\newcounter{hypA}
\newenvironment{hyp}{\refstepcounter{hypA}\begin{itemize}
\item[({\bf A\arabic{hypA}})]}{\end{itemize}}
  \providecommand{\definitionname}{Definition}
  \providecommand{\lemmaname}{Lemma}
  \providecommand{\propositionname}{Proposition}
  \providecommand{\remarkname}{Remark}
\providecommand{\corollaryname}{Corollary}
\providecommand{\theoremname}{Theorem}
\begin{document}
\begin{frontmatter}

\title{Probably approximate Bayesian computation: nonasymptotic convergence of ABC under mispecification}

\runtitle{Probably approximate Bayesian computation}
\begin{aug}

\author{\fnms{James} \snm{Ridgway}\ead[label=e1]{james.lp.ridgway@gmail.com}}

\address{Capital Fund Management}

\runauthor{J. Ridgway}
\affiliation{CFM}
\end{aug}

\begin{abstract}
Approximate Bayesian computation (ABC) is a widely used inference
method in Bayesian statistics to bypass the point-wise computation
of the likelihood. In this paper we develop theoretical bounds for
the distance between the statistics used in ABC. We show that some
versions of ABC are inherently robust to mispecification. The bounds
are given in the form of oracle inequalities for a finite sample size.
The dependence on the dimension of the parameter space and the number
of statistics is made explicit. The results are shown to be amenable
to oracle inequalities in parameter space. We apply our theoretical
results to given prior distributions and data generating processes,
including a non-parametric regression model. In a second part of the
paper, we propose a sequential Monte Carlo (SMC) to sample from the
pseudo-posterior, improving upon the state of the art samplers. 
\end{abstract}
\begin{keyword}

\kwd{ABC}
\kwd{Bayesian inference}
\kwd{Misspecification}
\kwd{Intractable Likelihood}
\kwd{Concentration}

\end{keyword}

\tableofcontents

\end{frontmatter}

\section{Introduction }

A wide range of statistical applications involve models where the
likelihood is not available in closed form. One typical case is when
the likelihood is expressed as a multidimensional integral, as in
state space models or models with high dimensional latent variables.
Another well studied case appears when the normalizing constant of
the likelihood is unknown, that is, the likelihood $\pi_{\theta}$
is written as $\pi_{\theta}(x)=\ell_{\theta}(x)/Z_{\theta}$, with
unknown $Z_{\theta}$. More generally we consider in this paper models
with hard to compute likelihoods that are relatively easy to sample
from. We refer the reader to \cite{Marin2012} for further motivation
for this framework. 

The goal of Approximate Bayesian Computation (ABC) is to perform statistical
inference in the case where the likelihood cannot be evaluated point-wise.
This algorithm has found success amongst applied statisticians in
fields as diverse as population genetics (\cite{beaumont2002approximate}),
astronomy (\cite{cameron2012approximate}), computer vision (\cite{mansinghka2013approximate})
etc. The general idea is to use auxiliary samples from the model for
different values of the parameter and compare them with the observed
variables. More precisely for a sequence of observations $(Y_{i})_{i=1}^{n}$,
and a vector of summary statistics $S$, ABC can be formulated as sampling
from the joint distribution 
\begin{equation}
\pi_{h}^{abc}\left(d\theta,dX^{n}\vert Y^{n}\right)=\frac{K_{h}\left\{ d\left(S(X^{n}),S(Y^{n})\right)\right\} \pi_{\theta}(dX^{n})\pi(d\theta)}{\int\int K_{h}\left\{ d\left(S(X^{n}),S(Y^{n})\right)\right\} \pi_{\theta}(dX^{n})\pi(d\theta)},\label{eq:Kernel-ABC}
\end{equation}
where $K_{h}$ is a kernel function with window $h$,
and $d$ is a distance between summary statistics. Inference in parameter
space is obtained by integrating in in $X$. The intuition is to sample
$\theta$ in the parameter space and an auxiliary sample $(X_{i})_{i=1}^{n}$
according to the model, such that the $(X_{i})_{i=1}^{n}$ are ``close''
to the observation. The kernel function enforces the closeness between
the simulated data and the observations. Several kernels are proposed
in practice, however two are widely used: the uniform kernel $\textbf{1}_{d(S(X^{n}),S(Y^{n}))<h}$
(corresponding to an accept reject algorithm) and the Gaussian kernel
$\frac{1}{\sqrt{h}}e^{-\frac{1}{h}d^{2}(S(X^{n}),S(Y^{n}))}$. 

Several points are usually discussed in the burgeoning literature
of ABC: the choice of summary statistics (\cite{Fearnhead2012,marin2014relevant}),
convergence of the Monte Carlo algorithm as $h$ goes to $0$
(e.g. \cite{barber2015rate}) and statistical properties of the pseudo-posterior
(\cite{li2015asymptotic,frazier2016}). We focus in this paper on
the latter point, the statistics are considered fixed and we let the
window $h$ go to $0$ only as a function of the sample size.
Taking this approach rather than trying to let $h$ go to zero
with the size of the Monte Carlo sample allows an appropriate treatment
of the bias introduced by ABC. 

In general the validity of the pseudo-posterior is given intuitively
by saying for $h\rightarrow0$ the pseudo-posterior ``should''
converge to the ``ideal'' one (Bayesian posterior). Relatively few
papers consider the statistical error induced by using distribution
(\ref{eq:Kernel-ABC}), and its rate of convergence. Notable exceptions
are given in \cite{frazier2016} in the uniform kernel case and \cite{li2015asymptotic}
under moment conditions on the kernel. The former studies posterior
consistency and its concentration rate as $n\rightarrow\infty$. The
latter gives a central limit theorem for the posterior mean of ABC.
Both papers study asymptotic convergence for models that are well
specified. We can also mention the recent paper of \cite{bernton2017inference}
where some results in Wasserstein distance for mispecified models
are adapted from \cite{frazier2016}. We also mention that since the first 
submission of our paper \cite{Frazier2017} proposed an extention of \cite{frazier2016}
to the misspecified case. The paper proposes an asymptotic analysis 
of the ABC error under misspecification. The conclusion on the robustness 
of ABC and the choice of the window parameter are similar. The main differences lies in the 
choice of the kernel $K_h$ (with empirical improvement), the ability to give non asymptotic results 
and to deal with high dimension parameters.

The form of the ABC pseudo-posterior suggests that it should allow
some degree of mispecification, in a spirit similar to generalized
posteriors (e.g. \cite{zhang2006}). We call generalized posterior,
for $\lambda>0$, a measure of the form 
\[
\pi_{\lambda}^{gen}\left(d\theta\vert Y^{n}\right)\propto\left\{ \pi_{\theta}\left(Y^{n}\right)\right\} ^{\lambda}\pi(d\theta).
\]
 The case $\lambda=1$ corresponds to the usual Bayesian posterior
distribution. The case where $\lambda<1$ is used to deal with misspecification
in \cite{zhang2006} and \cite{Gruenwald2014}. This works intuitively
by assigning less weight to the likelihood part. In this spirit ABC
seems to be built for the same kind of robustness. Similarly allowing
for a bigger window (or one converging less quickly) allows for some
robustness in the approach. In this paper we propose studying the
convergence of the pseudo-posterior in the case where the true distribution
of the sample lies outside the statistician's model. We rely on concentration
inequalities for the distance under general distributions. Bounds
are given in deviation using PAC-Bayesian analysis (e.g. \cite{Catoni2007}). 

Our results will rely on the use of an exponential kernel in equation
(\ref{eq:Kernel-ABC}). This also has some computational advantages
as compared to a uniform kernel. It allows to build a smooth sequence
of distribution indexed by the inverse window parameter. We will propose
a new Sequential Monte Carlo (SMC) algorithm to efficiently explore
this sequence. We build a fully adaptive version of the algorithm
of \cite{DelMoral2012} .

In the next section we define the framework that we will be using
in the paper. In Section \ref{sec:Theoretical-bounds} we give oracle
inequalities on the distance between the expected statistics. We also
give some bounds on the parameters themselves under additional assumptions.
In Section \ref{subsec:Empirical-bounds} we take a brief detour to
develop empirical bounds. Those results can be used in practice to
bound the generalization error (expected distance). We use those bounds
to develop an estimator that chooses automatically the bandwidth parameter
(Section \ref{subsec:Adaptive-bounds}). Section \ref{sec:Examples-of-bounds}
is devoted to some application of the bounds to different priors and
models. 

Finally in Section \ref{sec:Monte-Carlo-algorithm} we build upon
the sequential Monte Carlo sampler of \cite{DelMoral2012} to give
an efficient way to sample from the joint pseudo-posterior. The algorithm
is applied to a toy example in a numerical section (Section \ref{sec:Numerical-experiments}).
The proofs are deferred to Section \ref{sec:Proofs-and-supporting}.

\subsubsection*{Notation}

In the following we will use operator notation i.e. for a finite measure
$\nu$ we write for any $f\in L^{1}(\nu)$ the expectation $\nu(f):=\int f(x)\nu(dx).$
The support of measure $\nu$ is denoted $\text{supp}(\nu)$, and
vectors $Y_{n}^{m}:=(Y_{n},\cdots,Y_{m})$ for any $n<m$, we will
drop the subscript when the vector starts at $1$; we also use $n:m:=\left(n,\cdots,m\right)$.
For some set $A$ we write $\mathcal{M}_{1}^{+}(A)$ the set of probability
measures on $A$, the $\sigma$-algebra is given by context. We use
$\wedge$ and $\vee$ for the $\inf$ and $\sup$ respectively and
$v_{n}\asymp r_{n}$ for two sequences such that $v_{n}/r_{n}=\mathcal{O}(1)$.
The notation $\sup_{\pi}$ for $\pi\in\mathcal{M}_{+}^{1}$
is intended to mean the supremum over the support of $\pi$. Furthermore 
for two measures $\mu,\nu$ we write $\mu\otimes\nu$ for the product measure and 
$\mathcal{K}(\mu,\nu)$ for the Kullback-Leibler divergence.

\section{Set up and definitions}

Define a probability space $(\mathcal{Y}^{n},\mathcal{B}(\mathcal{Y}^{n}),\mathbb{P})$,
we suppose that the observation $Y^{n}\in\mathcal{Y}^{n}$ is an i.i.d.
collection sampled from the probability measure $\mathbb{P}$. We
define a model as a collection of measures $\left(\pi_{\theta},\theta\in\Theta\right)$
on some space $\left(\mathcal{X}^{n},\mathcal{B}(\mathcal{X}^{n})\right)$
indexed by a parameter $\theta\in\Theta$. We let $(\Theta,\Vert.\Vert)$
be normed and endow $\Theta$ with the structure of a probability
space $\left(\Theta,\mathcal{B}(\Theta),\pi\right)$ where $\pi$
is the prior probability. 

For the moment we do not assume anything on the probability $\mathbb{P}$
other than i.i.d., furthermore this hypothesis could also be weakened.
We will see that we require only some form of exponential concentration
inequality for the probability, this might also be obtained for more
general assumptions such as weak-dependance (e.g. \cite{olivier2010deviation}).
To shorten formulae in the text we also define the following marginal
measure $m_{X}(A):=\int_{A}\int_{\Theta}\pi(d\theta)\pi_{\theta}(dX^{n})$
for any $A\in\mathcal{B}(\mathcal{X}^{n})$. We will abuse notation
and write $\pi_{\theta}\pi(dX^{n},d\theta)$ for $\pi(d\theta)\pi_{\theta}(dX^{n})$,
hence $m_{X}(f)=\pi_{\theta}\pi(f)$ for any suitable function $f$
on $\mathcal{X}^{n}$.

We can now build the joint pseudo-posterior of interest. We start
by defining the summary statistics. Let $(\mathcal{S},d)$ be a  metric
space, the summary statistics is a function $S:\mathcal{X}^{n}\cup\mathcal{Y}^{n}\mapsto\mathcal{S}$.
The metric on $\mathcal{S}$ is the one we will use to compare samples
in the pseudo posterior. Notice that we do not assume that the samples
and the model live on the same sample space but rather that the summary
statistics maps both samples to $\mathcal{S}$. In the paper we will
consider cases where the observations are sampled on a bounded domain
but the auxiliary sample can reach out of this set. The choice of
the statistics has an impact on the quality of the approximation.
The ideal case in ABC is the one where the statistics are exhaustive
and the map $\theta\mapsto\pi_{\theta}(S)$ is injective (see \cite{frazier2016}).
Finding a set of statistics that correctly summarizes the distribution
is of course hard, if not impossible. In particular we do not have
access to the true distribution and can only hope in finding a summary
statistics for our model. We will not discuss further the choice of
the summary statistics as our measure of risk will itself depend on
the quality of this choice.

Another degree of freedom lies in the choice of the distance. This
has not been studied much in the literature. We will see that it arises
in our measure of the risk associated with distribution and different
dimension dependence. In the examples of Section \ref{sec:Examples-of-bounds}
we give two possible distances for which we can give theoretical results.
To simplify notations further, when it is necessary, we will write
$D_{n}^{S}$ for the distance $d(S(X^{n}),S(Y^{n}))$, where $Y^{n}$
is the observed vector and $X^{n}$ is an auxiliary sample from the
model. In what follows we will call $X^{n}$ the prior sample and
$\left(\pi_{\theta}\right)_{\theta\in\Theta}$ the prior model, as
this choice does not (formally) depend on the data and is confronted
to samples from a probability $\mathbb{P}$. 

The version of the ABC pseudo-posterior used in this paper can now
be defined. Using the previous notation we let
\begin{align*}
\varrho_{\lambda}(d\theta,dX^{n})= & \frac{1}{Z_{\lambda,\pi}}e^{-\lambda d(S(X^{n}),S(Y^{n}))}\pi_{\theta}(dX^{n})\pi(d\theta),\\
\text{where}\quad & Z_{\lambda,\pi}=\int_{\mathcal{X}^{n}\times\Theta}e^{-\lambda d(S(X^{n}),S(Y^{n}))}\pi_{\theta}(dX^{n})\pi(d\theta).
\end{align*}

This can be seen as the kernel-ABC defined in introduction (equation
(\ref{eq:Kernel-ABC})) with an exponential kernel. We have replaced
the window $\epsilon$ by its inverse $\lambda$. By analogy with
statistical Physics, when using a exponential kernel, we will refer
to $\epsilon$ as the temperature and $\lambda$ as the inverse temperature.
We define a set $\mathcal{I}\subset\mathbb{R}_{+}$ and let $\lambda\in\mathcal{I}$.
The main interest of end users of ABC is the marginal in $\theta$
of this joint distribution. We abuse notation and give the following
definition of ABC
\begin{defn}
\label{def:abc-theta}We define the ABC pseudo-posterior as the following
distribution

\[
\varrho_{\lambda}(d\theta):=\frac{\int_{\mathcal{X}^{n}}e^{-\lambda d(S(X^{n}),S(Y^{n}))}\pi_{\theta}(dX^{n})\pi(d\theta)}{\int_{\Theta}\int_{\mathcal{X}^{n}}e^{-\lambda d(S(X^{n}),S(Y^{n}))}\pi_{\theta}(dX^{n})\pi(d\theta)},
\]
 for any $\lambda\in\mathcal{I}$.

We define furthermore the following marginal in $X^{n}$, $\ensuremath{\varrho_{X,\lambda}(dX^{n}):=\int_{\Theta}\varrho_{\lambda}(dX^{n},d\theta)}.$
\end{defn}
As was emphasized in the introduction the pseudo-posterior is built
by augmenting the sample space with latent variables representing
a sample from prior model $\pi_{\theta}$. Those samples are weighted
according to the exponential weights $e^{-\lambda d(S(X^{n}),S(Y^{n}))}$,
thus the samples that are close to the observations receive higher
weight than those that are far. The inverse temperature parameter
$\lambda$ emphasizes the spikiness of the weights as it increases.
The marginal $\rho_{X,\lambda}$ is an important quantity in our study.
We expect that it should, for large enough sample size, be close to
samples the ``best model''. Because we treat the case where
$\mathbb{P}\notin\left\{ \pi_{\theta},\theta\in\Theta\right\} $ the
best $\theta$ may appear as a vague notion.
We will define the latter as the oracle parameter,
\begin{defn}
\label{def:The-oracle-parameter}The oracle parameter $\theta^{\star}$
is given by 
\[
\theta^{\star}\in\arg\min_{\theta\in\Theta}d(\pi_{\theta}(S),\mathbb{P}(S)).
\]
\end{defn}
The definition encapsulates the special case where there exists a
parameter $\theta^{{\rm true}}$ such that $\mathbb{P}\equiv\pi_{\theta^{{\rm true}}}$.
We write the definition with a $\in$ sign to emphasize the fact that
this parameter may not be unique if the statistics are chosen poorly.
 This is not a standard definition 
for oracle paramaters. Typical parametric risk in Bayesian statistics have been measured
using the Kullback-Leibler divergence, Hellinger distance, Wasserstein distance etc.
In ABC without additional assumption on the statistics it is unrealistic to hope 
for such results. However it is interesting to note that for a finite collection of 
functions $\mathcal{F}=\left\lbrace S^1,S^2,\cdots,S^d\right\rbrace$ and using a distance based
on the $\sup$-norm the definition of the oracle risk $\inf_{\theta\in\Theta}\sup_{S\in\mathcal{F}}\vert\pi_\theta S-\mathbb{P}S\vert$ 
is similar in nature to a Wasserstein distances. Notice also that
$\delta(\theta,\theta^\prime S)=d(\pi_\theta S,\pi_{\theta^\prime})$ defines a pseudo metric.
We are interested in showing that the pseudo-posterior concentrates
in some sense around some $\theta^{\star}$.
We give some of the assumptions needed to do this in the next subsection.

\subsection{Assumptions and discussion\label{subsec:Assumptions-and-discussion}}

In order to say something on the convergence of this method, we must
set a few assumptions. In particular we want to state a result in
terms of the distance between moments of the summary statistics. To
get finite sample bounds on this quantity we need a concentration
inequality to hold under the probability $\mathbb{P}$. Our main assumption
on the data generating process is given by an Hoeffding type inequality.

\medskip{}

\begin{hyp}

\label{assu:Hoeffding}We say that the Hoeffding assumption is satisfied
for the model $m_{X}$, and a set $\mathcal{I}$ if for any $\lambda\in\mathcal{I}$
and some function $f:\mathbb{N}\times\mathcal{I}\mapsto\mathbb{R}_{+}$
we have 

\[
\left.\begin{array}{c}
m_{X}\otimes\mathbb{P}\left\{ e^{-\lambda\left(D_{n}^{S}-\mathbb{P}D_{n}^{S}\right)}\right\} \\
m_{X}\otimes\mathbb{P}\left\{ e^{-\lambda\left(\mathbb{P}D_{n}^{S}-D_{n}^{S}\right)}\right\} 
\end{array}\right\} \leq e^{f(n,\lambda)}.
\]

\end{hyp}

\medskip{}

The inequality can be interpreted as an integrated version (with respect
to $m_{X}$) of Hoeffding\textquoteright s inequality. If the distance
is bounded uniformly over $X^{n}$; then Hoeffding\textquoteright s
inequality will directly imply Assumption A\ref{assu:Hoeffding}.
Another case for which we can obtain such inequality is the bounded
difference inequality, sometimes referred to as McDiarmid's inequality
(\cite{Boucheron2013}). The expectation with respect to $m_{X}$
could, in specific cases, allow us to treat cases with unboundness.
Although more generally some authors have considered the case of unbounded
losses in a general context (\cite{grunwald2016,mendelson2014}).
Notice that the boundness condition does not have to come from the
data itself but can be due to the statistic $S$ or the distance itself.
Although interesting we leave the unbounded case for future studies.
The second assumption we will make is that the distance in jointly
convex in both of its argument.\medskip{}

\begin{hyp}

\label{assu:distance}

The distance $d(.,.):(x,y)\rightarrow d(x,y)$ is assumed to be convex
in both its arguments.

\end{hyp}

\medskip{}

This assumption is rather weak and is verified in particular for any
distance based on a norm. Assumption A\ref{assu:distance} is needed
to make the results more readable. In fact it can be removed if one
is satisfied with randomized estimators (see Remark \ref{rem:convex}
below).

Finally we introduce some regularity on the prior model, that is,
on the measurable map $\theta\mapsto\pi_{\theta}$. We require that
the moments of statisics with respect to the model are locally Lipschitz
around $\theta^\star$.

\medskip{}

\begin{hyp}

\label{assu:Lipschitz}We say that the model is $(\bar{\delta},L)$-locally
Lipschitz in $\theta^{\star}$ if there exists a $\bar{\delta}>0$
such that for any $\theta\in\left\{ \theta\in\Theta:\left\Vert \theta-\theta^{\star}\right\Vert <\delta\right\} $,
the moments of the statistics are $L$-Lipschitz. That is there exists $L<\infty$
such that 
\[
\Vert\pi_{\theta}\Vert_{Lip}=\sup_{\theta\in\left\{ \theta\in\Theta:\left\Vert \theta-\theta^{\star}\right\Vert <\delta\right\}}\frac{d\left(\pi_\theta S,\pi_{\theta^\star}S\right)}{\Vert\theta-\theta^\star\Vert}\leq L.
\]

\end{hyp}

\medskip{}

This kind of assumption ican rarely be checked in practice
as it depends on a model that usually does not have a closed form
(in interesting examples at least). 
\begin{rem}
A typical example where it is possible to check it is given by the
Gibbs model mentioned in the introduction where $\pi_{\theta}\left(dX^{n}\right)=\frac{1}{Z_{\theta}}e^{\theta^{T}S(X^{n})}\mu\left(dX^{n}\right)$
with bounded statistics $S$. Often those models do not have a tractable
normalizing constant and we might want to use ABC (see \cite{grelaud2009abc}). 
In this case we have that $\partial_\theta\log Z_\theta=\pi_\theta S$ and $\partial^2_{\theta\theta}\log Z_\theta=\pi_\theta \left\lbrace S-\pi_\theta S\right\rbrace^2$. Assumption (A\ref{assu:Lipschitz}) is satisfied if $\pi_\theta \left\lbrace S-\pi_\theta S\right\rbrace^2\leq L$ around $\theta^\star$.
\end{rem}
\begin{rem}
We can allow $L$ to depend on $n$, in the bounds that we prove $L$
will be compared to a sequence $\delta_{n}<\bar{\delta_{n}}$ that
will typically be an order of magnitude smaller than the rate of convergence.
This will allow some slack in the choice of the model.
\end{rem}
We are now ready to give the first building block for our inequalities. 

\section{Theoretical bounds\label{sec:Theoretical-bounds}}

In this section we give a few intermediate results to derive finite
sample oracle inequalities for the marginal distribution $\rho_{X,\lambda}$.
We will also give empirical bounds (Section (\ref{subsec:Empirical-bounds}))
and an adaptive version of the oracle inequality, that is a version
of the statistical estimator that automatically chooses the value
of the inverse temperature $\lambda$.

Our goal is to give bounds on the distance between expected values
of the summary statistics. This is a key quantity in our study. It appears to 
be the quantity that is minimised naturally when performing ABC with a window that
shrinks to $0$. Also $\delta(\theta,\theta^\star)=d(\pi_\theta S,\pi_{\theta^\star})$ 
is a pseudo-metrics that encompasses the flaws in the choice of the statistics.
We will state the inequalities in high probability
with respect to the data generating mechanism. We do not in particular
assume that the proposed likelihood matches the probability $\mathbb{P}$,
more formally we encapsulate the case where $\mathbb{P}\notin\left\{ \pi_{\theta},\theta\in\Theta\right\} .$ 
\begin{lem}
\label{lem:bound}Suppose that Assumptions A\ref{assu:Hoeffding},
A\ref{assu:distance} and A\ref{assu:Lipschitz} with constants $(\bar{\delta},L)$
are satisfied then, for any $\lambda\in\mathcal{I}$, $\delta<\bar{\delta}$
and $\epsilon>0$, with probability at least $1-\epsilon$, 

\begin{multline*}
d(\varrho_{\lambda,X}(S),\mathbb{P}(S))\leq\inf_{\theta\in\Theta}d(\pi_{\theta}(S),\mathbb{P}(S))+\sup_{\theta:\Vert\theta-\theta^\star\Vert<\delta}\pi_{\theta}\left\{ d\left(S,\pi_{\theta}S\right)\right\} +L\delta+\mathbb{P}\left\{ d\left(S,\mathbb{P}S\right)\right\} \\
+\frac{2f(n,\lambda)}{\lambda}-\frac{2}{\lambda}\log\pi\left(\left\{ \Vert\theta-\theta^{\star}\Vert<\delta\right\} \right)+\frac{2}{\lambda}\log\frac{2}{\epsilon}.
\end{multline*}
\end{lem}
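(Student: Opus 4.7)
The plan is to reduce $d(\varrho_{\lambda,X}(S),\mathbb{P}S)$ to $\varrho_\lambda(\mathbb{P}D_n^S)$ by Jensen's inequality, then control this quantity by a two-sided PAC-Bayesian bound combined with the Gibbs variational identity for $\varrho_\lambda$ and a test measure localised around $\theta^{\star}$. First I would apply A\ref{assu:distance} twice: Jensen in the outer argument of $d$ gives $d(\varrho_{\lambda,X}(S),\mathbb{P}S)\leq\varrho_{\lambda,X}\{d(S,\mathbb{P}S)\}$, and Jensen in the inner argument applied pointwise in $X^{n}$ gives $d(S(X^{n}),\mathbb{P}S)\leq\mathbb{P}\{d(S(X^{n}),S(Y^{n}))\}=\mathbb{P}D_{n}^{S}$. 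Composing yields $d(\varrho_{\lambda,X}(S),\mathbb{P}S)\leq\varrho_\lambda(\mathbb{P}D_{n}^{S})$.

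Next I would establish a two-sided PAC-Bayes inequality from A\ref{assu:Hoeffding}. Feeding each line of A\ref{assu:Hoeffding} into Fubini and Markov (each at probability $\epsilon/2$) yields $\pi_\theta\pi(e^{\pm\lambda(D_{n}^{S}-\mathbb{P}D_{n}^{S})})\leq(2/\epsilon)e^{f(n,\lambda)}$ on an event of $\mathbb{P}$-probability at least $1-\epsilon$ (by union bound). Donsker--Varadhan's duality $\rho(h)\leq\mathcal{K}(\rho,\pi_\theta\pi)+\log\pi_\theta\pi(e^{h})$ applied with $h=\pm\lambda(D_{n}^{S}-\mathbb{P}D_{n}^{S})$ then gives, on this event and uniformly in probability measures $\rho\ll\pi_\theta\pi$, both
\[
\rho(D_{n}^{S})-\rho(\mathbb{P}D_{n}^{S})\leq\frac{\mathcal{K}(\rho,\pi_\theta\pi)+f(n,\lambda)+\log(2/\epsilon)}{\lambda}
\]
and its reverse (with $D_{n}^{S}$ and $\mathbb{P}D_{n}^{S}$ interchanged).

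I would then chain three deterministic facts to control $\varrho_\lambda(\mathbb{P}D_{n}^{S})$. The reverse inequality above at $\rho=\varrho_\lambda$ yields $\varrho_\lambda(\mathbb{P}D_{n}^{S})\leq\varrho_\lambda(D_{n}^{S})+[\mathcal{K}(\varrho_\lambda,\pi_\theta\pi)+f(n,\lambda)+\log(2/\epsilon)]/\lambda$. Gibbs optimality of $\varrho_\lambda$ as minimiser of $\rho\mapsto\lambda\rho(D_{n}^{S})+\mathcal{K}(\rho,\pi_\theta\pi)$, tested against the localised measure $\rho_{0}(d\theta,dX^{n}):=\pi_\theta(dX^{n})\pi(d\theta)\mathbf{1}_{\{\|\theta-\theta^{\star}\|<\delta\}}/\pi(\{\|\theta-\theta^{\star}\|<\delta\})$ (for which $\mathcal{K}(\rho_{0},\pi_\theta\pi)=-\log\pi(\{\|\theta-\theta^{\star}\|<\delta\})$), dominates the bracketed term. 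Applying the forward inequality at $\rho=\rho_{0}$ finally bounds $\rho_{0}(D_{n}^{S})$ by $\rho_{0}(\mathbb{P}D_{n}^{S})$ plus the same slack, producing
\[
\varrho_\lambda(\mathbb{P}D_{n}^{S})\leq\rho_{0}(\mathbb{P}D_{n}^{S})+\frac{2f(n,\lambda)-2\log\pi(\{\|\theta-\theta^{\star}\|<\delta\})+2\log(2/\epsilon)}{\lambda},
\]
the factor $2$ arising because PAC-Bayes is invoked at both $\varrho_\lambda$ and $\rho_{0}$.

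It remains to bound $\rho_{0}(\mathbb{P}D_{n}^{S})$ deterministically, which I would do by the four-term triangle inequality $d(S(X^{n}),S(Y^{n}))\leq d(S(X^{n}),\pi_\theta S)+d(\pi_\theta S,\pi_{\theta^{\star}}S)+d(\pi_{\theta^{\star}}S,\mathbb{P}S)+d(\mathbb{P}S,S(Y^{n}))$, integrated against $\pi_\theta(dX^{n})\mathbb{P}(dY^{n})$ and averaged over $\theta$ with weight $\pi/\pi(\{\|\theta-\theta^{\star}\|<\delta\})$ restricted to $\{\|\theta-\theta^{\star}\|<\delta\}$. A\ref{assu:Lipschitz} controls $d(\pi_\theta S,\pi_{\theta^{\star}}S)$ by $L\delta$ on this set, the definition of $\theta^{\star}$ identifies $d(\pi_{\theta^{\star}}S,\mathbb{P}S)$ with $\inf_\theta d(\pi_\theta S,\mathbb{P}S)$, and the first and fourth terms produce $\sup_{\|\theta-\theta^{\star}\|<\delta}\pi_\theta\{d(S,\pi_\theta S)\}$ and $\mathbb{P}\{d(S,\mathbb{P}S)\}$. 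The main subtlety is the \emph{uniformity in $\rho$} of the PAC-Bayes inequalities (from the supremum in the Donsker--Varadhan variational formula): it is this uniformity that lets the same probabilistic event support both the data-dependent $\varrho_\lambda$ and the fixed $\rho_{0}$, and it is the double invocation that forces the factor $2$ in the final slack terms.
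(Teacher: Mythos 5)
Your proposal is correct and follows essentially the same route as the paper: the same two-sided PAC--Bayes bound obtained from A1 via Markov/Fubini and the Donsker--Varadhan (Gibbs variational) identity with a union bound at level $\epsilon/2$, the same exploitation of $\varrho_{\lambda}$ as the minimiser of $\rho\mapsto\lambda\rho(D_{n}^{S})+\mathcal{K}(\rho,\pi_{\theta}\pi)$ tested against the localised measure $\varrho_{\theta^{\star},\delta}$, and the same four-term triangle inequality to bound the resulting oracle term. The only cosmetic difference is that you make the two applications of Jensen under A2 explicit at the outset, whereas the paper invokes convexity once after establishing the bound on $\omega_{d}(\varrho_{\lambda},\mathbb{P})$; the accounting for the factor $2$ and the $\log(2/\epsilon)$ is identical.
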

\begin{proof}
The proof is given in Section (\ref{subsec:Proof-of-lemma1})
\end{proof}
The lemma states that the distance between the expectation of the
statistics under the pseudo-posterior and under the data generating
probability should get closer to each other, as the number of samples
increases, if the terms appearing on the right hand-side go to zero.
The first term is the bias due to the model misspecification, obviously
in the case where $\mathbb{P}\in\left\{ \pi_{\theta},\theta\in\Theta\right\} $
this term will be null. We can hope to make it small if the model
is rich enough. There exists a trade off : one could choose a statistics
that does not vary much to cancel the term at the cost of rendering
the left hand side insignificant. This will be detailed in the next
section when we give bounds on the parameters.

The two next terms $\sup_{\theta:\Vert\theta-\theta^\star\Vert<\delta}\pi_{\theta}\left\{ d\left(S,\pi_{\theta}S\right)\right\}$
and $\mathbb{P}\left\{ d(S,\mathbb{P}S)\right\} $ account for the
fact that the statistics should converge  to their expectation uniformily around $\theta^\star$ and under $\mathbb{P}$ 
(the convergence is in the metric $d$). This is rather weak, considering that we already
asked for the distance to have an exponential concentration. Most
of the time the statistics considered will be some empirical moment.
The terms $L\delta$ and $2f(n,\lambda)/\lambda$ are
defined in assumptions A\ref{assu:Hoeffding} and A\ref{assu:Lipschitz}
and account for the smoothness of the model and the rate at which
the distance concentrates respectively.

The last term to mention is the one depending on the prior. That is
the probability of a ball around the oracle parameter under $\pi$.
It is very common in the theoretical analysis of Bayesian estimators
and imposes that the prior puts sufficient mass around the optimal
parameter $\theta^{\star}$. We need to find a pair of converging
sequences $(v_{n},\delta_{n})$ such that for some constant $C>0$
we have $\log\pi\left(\left\Vert \theta-\theta^{\star}\right\Vert <\delta_{n}\right)\geq Cv_{n}$.
We will give examples of models for which those terms can be expected
to give the correct rate in Section (\ref{sec:Examples-of-bounds}).

Our goal needs also to be addressed. The lemma implies that we could
bound the distance between moments of statistics. First,
in all generality the statistics depend on the sample size and hence
the lemma might loose its interpretability. Second, one might wonder
as to why this statement is meaningful when the practitioner is interested
in the convergence of the estimator obtained as a moment of the distribution
$\rho_{\lambda}$ on $\Theta$ of Definition \ref{def:abc-theta}.
The key idea is to understand that the moment of the statistics are
supposed to identify or partly identify the distribution of interest.
In the case of methods of moments (in the correctly specified setting)
it is common to define a map $e_{h}:\theta\rightarrow e_{h}(\theta)\equiv\pi_{\theta}(h)$,
and to suppose it injective. In the next section we exploit further
this interpretation and give bounds on the parameter themselves. 
\begin{rem}
\label{rem:convex}In the rest of the paper we will use the convexity
assumption on the distance to ensure that $d(\varrho_{\lambda}S,\mathbb{P}S)\leq\varrho_{\lambda}\mathbb{P}d(S,S^{\prime})$.
In fact one could also state the results for the randomized estimator,
hence in deviation under the probability $\mathbb{P}\varrho_{\lambda}$,
thus removing the convexity assumption. We call randomized estimator
an estimator $\hat{\theta}_{n}$ that consists in a sample from $\varrho_{\lambda}$.
All the bounds actually hold on $\varrho_{\lambda}\mathbb{P}d(S,S^{\prime})$.
We will use this fact repeatedly in the rest of the paper.
\end{rem}

\subsection{Interpretation of the results}

In the previous subsection we have discussed a general lemma that
gives a bound on the excess risk of the distance between expected
moments of the statistics. Here we show that those results are amenable
under stronger assumptions to bounds in the parameter space.

\medskip{}

\begin{hyp}

\label{assu:inject}Assume that for $\theta^{\star}$ the oracle parameter
and $\forall\theta\in\Theta$ there exists $K<\infty$ such that 
\[
\left\Vert \theta-\theta^{\star}\right\Vert \leq Kd\left(\pi_{\theta}(S),\pi_{\theta^{\star}}(S)\right).
\]

\end{hyp}

\medskip{}

The assumption imposes a form of identifiability of the parameter
space given the statistics. For exponential models the assumption
has a more understandable formulation. Suppose that $\pi_{\theta}(dx)\propto\exp\left\{ -\theta^{T}S(x)\right\} dx;$
such models do not have a tractable normalizing constant and we might
want to use ABC (see \cite{grelaud2009abc}). The normalizing constant
$Z_{\theta}=\int\exp\left(-\theta^{T}S(x)\right)dx$ and hence under
appropriate regularity conditions $\left.\frac{\partial}{\partial\theta}\log Z_{\theta}\right|_{\theta}=\pi_{\theta}S$,
and $\left.\frac{\partial^{2}}{\partial\theta^{T}\partial\theta}\log Z_{\theta}\right|_{\theta}=\pi_{\theta}\left\{ \left(S-\pi_{\theta}S\right)^{2}\right\} $
hence if we assume $\inf_{\theta\in\Theta}\pi_{\theta}\left\{ \left(S-\pi_{\theta}S\right)^{2}\right\} >c>0$
then Assumption \ref{assu:inject} is satisfied by the mean value
theorem. 

In \cite{frazier2016} a weakened version of the assumption appears,
where the authors assume $\left\Vert \theta-\theta^{\star}\right\Vert \leq Kd^{\alpha}\left(\pi_{\theta}(S),\pi_{\theta^{\star}}(S)\right)$
for some constant $\alpha>0$.
\begin{rem}
\label{rem:K-constant}Notice that if $d$ is homogeneous and $K$
is known in advance, we can scale the statistic $S$ in such a way
that $K=1$. This is an important point as we will see later a constant
$K$ different from $1$ impact negatively the bias.

We need the following assumption on the exponential concentration
under the prior model 
\end{rem}
\begin{hyp}

\label{assu:Hoeffding-model}We say that the Hoeffding assumption
is satisfied for the model $m_{X}$, and a set $\mathcal{I}$ if for
any $\lambda\in\mathcal{I}$ and some function $\tilde{f}:\mathbb{N}\times\mathcal{I}\mapsto\mathbb{R}_{+}$
we have 

\[
m_{X}\left\{ e^{-\lambda\left(d(S,\pi_{\theta}S)-\pi_{\theta}d(S,\pi_{\theta}S)\right)}\right\} \leq e^{\tilde{f}(n,\lambda)}.
\]
\end{hyp}

\medskip{}

This is the same assumption as A\ref{assu:Hoeffding} only it is specified
under our prior model. If the distance is bounded or if we can use
McDiarmid's inequality in the original Hoeffding inequality, then
A\ref{assu:Hoeffding-model} will follow under independence or weak
dependence of the model.

We may now write the oracle inequality for the parameter $\theta$
sampled under the ABC pseudo posterior.
\begin{thm}
\label{thm:theta} Let assumptions A\ref{assu:Hoeffding}-\ref{assu:Hoeffding-model}
be satisfied, $\lambda\in\mathcal{I}$ and $\delta<\bar{\delta},$
then for any $n\geq1$ with probability $1-\epsilon$

\begin{multline*}
\varrho_{\lambda}\left\{ \left\Vert \theta-\theta^{\star}\right\Vert \right\} \leq3K\inf_{\theta\in\Theta}d(\pi_{\theta}(S),\mathbb{P}(S))+2K\sup_{\theta:\Vert\theta-\theta^\star\Vert<\delta}\pi_{\theta^{\star}}\left\{ d\left(S,\pi_{\theta^{\star}}S\right)\right\}+2KL\delta +2K\mathbb{P}\left\{ d\left(S,\mathbb{P}S\right)\right\} +K\sup_{\pi}\pi_{\theta}\left(d(\pi_{\theta}S,S)\right)\\
+\frac{2Kf(n,\lambda)}{\lambda}+\frac{K\tilde{f}(n,\lambda)}{\lambda}-\frac{4K}{\lambda}\log\pi\left(\left\{ \Vert\theta-\theta^{\star}\Vert<\delta\right\} \right)+\frac{K}{\lambda}\log\frac{32}{\epsilon^{3}}.
\end{multline*}
\end{thm}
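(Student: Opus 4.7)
The plan is to bootstrap the statistic-space bound of Lemma~\ref{lem:bound} into a parameter-space bound by means of the injectivity assumption A\ref{assu:inject}. From A\ref{assu:inject},
\begin{equation*}
\varrho_\lambda\{\|\theta - \theta^\star\|\} \leq K\,\varrho_\lambda\{d(\pi_\theta S, \pi_{\theta^\star} S)\},
\end{equation*}
and a triangle inequality through the intermediate points $S(X^n)$, $S(Y^n)$ and $\mathbb{P}S$, followed by integration under the joint pseudo-posterior, yields
\begin{equation*}
\varrho_\lambda\{d(\pi_\theta S, \pi_{\theta^\star} S)\} \leq \varrho_\lambda\{d(\pi_\theta S, S)\} + \varrho_\lambda\{D_n^S\} + d(S(Y^n), \mathbb{P}S) + \inf_\theta d(\pi_\theta S, \mathbb{P}S).
\end{equation*}
This already accounts for one copy of the $K \inf_\theta d$ term in the theorem; the remaining three random quantities each need an in-probability bound.

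For $\varrho_\lambda\{D_n^S\}$ I would rerun the Lemma~\ref{lem:bound} argument: Gibbs optimality of $\varrho_\lambda$ against the prior localized to $B := \{\|\theta - \theta^\star\| < \delta\}$ gives $\varrho_\lambda\{D_n^S\} \leq \rho_B\{D_n^S\} - \log\pi(B)/\lambda$ plus a deviation term from A\ref{assu:Hoeffding}, and an internal triangle inequality combined with the local Lipschitz assumption A\ref{assu:Lipschitz} inside $B$ generates the $L\delta$, $\sup \pi_\theta\{d(S, \pi_\theta S)\}$, $\mathbb{P}\{d(S,\mathbb{P}S)\}$ and $f(n,\lambda)/\lambda$ contributions together with a second copy of $\inf_\theta d$. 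The term $d(S(Y^n), \mathbb{P}S)$ is then handled directly by A\ref{assu:Hoeffding} via an exponential Markov inequality, which reuses the same $\mathbb{P}\{d(S,\mathbb{P}S)\}$ and $f/\lambda$ contributions; this is the source of the factor $2$ in front of these two quantities.

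The genuinely new ingredient is a PAC-Bayesian bound on $\varrho_\lambda\{d(\pi_\theta S, S)\}$ using A\ref{assu:Hoeffding-model}. The plan is to apply the Donsker--Varadhan duality with reference measure $m_X$ and the centred function $\lambda[d(S, \pi_\theta S) - \pi_\theta d(S, \pi_\theta S)]$, so that A\ref{assu:Hoeffding-model} delivers
\begin{equation*}
\varrho_\lambda\{d(\pi_\theta S, S)\} \leq \sup_\pi \pi_\theta\{d(\pi_\theta S, S)\} + \frac{\mathcal{K}(\varrho_\lambda, m_X) + \tilde f(n,\lambda)}{\lambda}.
\end{equation*}
To close the argument, $\mathcal{K}(\varrho_\lambda, m_X)$ must itself be bounded by Gibbs optimality against the same localized prior $\rho_B$, which reintroduces a further copy of $\inf_\theta d$ and another $-\log\pi(B)$; this is exactly what produces the $3K$ coefficient on $\inf_\theta d$ and the $4K/\lambda$ coefficient on $-\log\pi(\{\|\theta - \theta^\star\| < \delta\})$ in the stated bound. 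A union bound with budget $\epsilon/c$ for an appropriate constant across the three concentration events (A\ref{assu:Hoeffding} used twice, A\ref{assu:Hoeffding-model} once) then gives the $(K/\lambda)\log(32/\epsilon^3)$ term.

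The main obstacle is this last step: orchestrating the two Gibbs-optimality comparisons against the common localized prior $\rho_B$ without double-counting the Lemma~\ref{lem:bound}-style terms more than necessary, and using A\ref{assu:Hoeffding-model} in the direction that genuinely upper bounds $\varrho_\lambda\{d(\pi_\theta S, S)\}$ rather than its $\pi_\theta$-centred variant. Once those are in place, bookkeeping on the $\epsilon$-budgets and on the coefficients of $\inf_\theta d$, $-\log\pi(B)$, and $f,\tilde f$ finishes the proof.
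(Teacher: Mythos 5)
Your overall architecture coincides with the paper's: reduce to $d(\pi_{\theta}S,\pi_{\theta^{\star}}S)$ via A\ref{assu:inject}, split by the triangle inequality, bound the model-centred term $\varrho_{\lambda}\{d(\pi_{\theta}S,S)\}$ by a change of measure with reference $\pi_{\theta}\pi$ using A\ref{assu:Hoeffding-model} together with a Gibbs-optimality comparison of $\mathcal{K}(\varrho_{\lambda},\pi_{\theta}\pi)$ against the localized prior $\varrho_{\theta^{\star},\delta}$, and close with a union bound; your accounting of where the $3K$ and $4K/\lambda$ coefficients come from is also the paper's. (You are right, incidentally, that A\ref{assu:Hoeffding-model} must be used in the direction that bounds $m_{X}\{e^{+\lambda(d(S,\pi_{\theta}S)-\pi_{\theta}d(S,\pi_{\theta}S))}\}$; the paper applies it that way even though the assumption is written with the opposite sign.)

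The one place where your argument deviates, and where it does not go through as written, is the middle of the triangle-inequality chain. The paper uses the three-link chain $\pi_{\theta}S\to S(X^{n})\to\mathbb{P}S\to\pi_{\theta^{\star}}S$, so that the middle term is $\varrho_{\lambda}\{d(S,\mathbb{P}S)\}\leq\varrho_{\lambda}\mathbb{P}\{d(S(X^{n}),S(Y^{n}))\}$ by convexity, which is exactly the quantity controlled by Lemma \ref{lem:bound} (via Lemma \ref{lem:lemma_oracle1}). You insert $S(Y^{n})$ as an extra link and thereby create the term $d(S(Y^{n}),\mathbb{P}S)$, a random variable depending on the data alone. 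Assumption A\ref{assu:Hoeffding} is an integrated exponential-moment bound under $m_{X}\otimes\mathbb{P}$ for $D_{n}^{S}-\mathbb{P}D_{n}^{S}$; through Markov's inequality it yields high-probability control of $\rho(D_{n}^{S}-\mathbb{P}D_{n}^{S})$ for measures $\rho\ll m_{X}$, not of a functional of $Y^{n}$ only, so ``handled directly by A\ref{assu:Hoeffding}'' is not available. Patching this with the tools at hand (e.g. $d(S(Y^{n}),\mathbb{P}S)\leq\varrho_{\theta^{\star},\delta}(D_{n}^{S})+\varrho_{\theta^{\star},\delta}(\mathbb{P}D_{n}^{S})$ followed by the reverse deviation inequality) doubles the contributions of $\inf_{\theta}d$, $L\delta$ and $-\log\pi(\Vert\theta-\theta^{\star}\Vert<\delta)$ coming from that branch and would not reproduce the stated constants; otherwise one must add a separate concentration hypothesis for $Y^{n}\mapsto d(S(Y^{n}),\mathbb{P}S)$ under $\mathbb{P}$, which holds in the paper's examples but is not among A\ref{assu:Hoeffding}--A\ref{assu:Hoeffding-model}. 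Reverting to the paper's chain, i.e. keeping $\mathbb{P}S$ adjacent to $S(X^{n})$, removes the problem; everything else in your plan then matches the paper's proof.
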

\begin{proof}
The proof is given in Section \ref{subsec:thm-theta}.
\end{proof}
On top of the additional assumption that we impose on the model (exponential
concentration) we see that another requirement is that we have uniform
convergence in the sense of $\sup_{\pi}\pi_{\theta}\left(d(\pi_{\theta}S,S)\right)$
converging. Furthermore we notice that we do not recover an exact
oracle inequality unless $K\leq1/3$ (see Remark \ref{rem:K-constant}).
Theorem \ref{thm:theta} is particularly interesting when $\inf_{\theta\in\Theta}d(\pi_{\theta}(S),\mathbb{P}(S))$
is small. The model should be rich enough to have to be close to $\mathbb{P}$ for those moments at least. 
It is obvious that $d(\pi_{\theta}S,\mathbb{P}S)$ can be big, in fact take $\mathbb{P}$ to be a iid gausian $\mathcal{N}(0,\sigma^{2})$, 
chose the model $\pi_{\theta}=\delta_{\theta}$ the Dirac in $\theta$ and $S(X^{n})=Var(X^{n})$ we can see that as the variance $\sigma^{2}$ goes to $\infty$ so does $\inf_\theta d(\pi_{\theta}S,\mathbb{P}S)$. 
Of course the example points to a bad choice of statistics and model.

\subsection{Empirical bounds \label{subsec:Empirical-bounds}}

As a by-product of the oracle inequalities introduced in the previous
section we obtain empirical bounds. They offers a guarantee on the
generalization error of the algorithm. The upper bound can be computed
from the data, however they do not offer a convergence results in
the form of an oracle inequality. It will also come as one of the
building brick of our adaptive algorithm of sub-Section \ref{subsec:Adaptive-bounds}.

The following result is a rewriting of a result by \cite{Catoni2007}
adapted to our framework. We give a proof in the Section \ref{sec:Proofs-and-supporting}
for completeness.
\begin{prop}
\label{prop:Emp}Under assumptions A\ref{assu:Hoeffding} and A\ref{assu:distance}
for any $\rho\in\mathcal{M}_{1}^{+}(\Theta\times\mathcal{X}^{n})$,
$\lambda\in\mathcal{I}$ and $\epsilon>0$ with probability at least
$1-\epsilon$,
\[
d(\varrho(S),\mathbb{P}(S))\leq\varrho(d(S,S(Y^{n})))+\frac{1}{\lambda}\mathcal{K}(\varrho,\pi_{\theta}\pi)+\frac{f(n,\lambda)}{\lambda}+\frac{1}{\lambda}\log\frac{1}{\epsilon},
\]

As a special case for $\varrho_{\lambda}$ we have with probability at
least $1-\epsilon$,
\[
d(\varrho_{\lambda,X}(S),\mathbb{P}(S))\leq-\frac{1}{\lambda}\log Z_{\lambda,\pi}+\frac{f(n,\lambda)}{\lambda}+\frac{1}{\lambda}\log\frac{1}{\epsilon}.
\]
\end{prop}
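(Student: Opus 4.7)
The plan is to follow the standard PAC-Bayes recipe of \cite{Catoni2007}, adapted to the ABC setup. The three ingredients are the exponential moment bound of Assumption A\ref{assu:Hoeffding}, the Donsker-Varadhan variational formula for changing measure from the prior $\pi_\theta\pi$ to an arbitrary posterior $\varrho$, and the convexity of $d$ from Assumption A\ref{assu:distance}.

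The first step is to apply Markov's inequality to the first line of A\ref{assu:Hoeffding}, rewritten as
\[
\mathbb{E}_{Y^n\sim\mathbb{P}}\left[m_X\!\left(e^{\lambda(\mathbb{P}D_n^S - D_n^S)}\right)\right]\leq e^{f(n,\lambda)}.
\]
Viewing the inner integral as a non-negative random variable in $Y^n$, Markov yields, with $\mathbb{P}$-probability at least $1-\epsilon$,
\[
\log m_X\!\left(e^{\lambda(\mathbb{P}D_n^S - D_n^S)}\right)\leq f(n,\lambda)+\log(1/\epsilon).
\]

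The second step uses the Donsker-Varadhan variational representation: for any $\varrho\ll\pi_\theta\pi$ and any measurable $g$,
\[
\varrho(g)\leq \mathcal{K}(\varrho,\pi_\theta\pi)+\log(\pi_\theta\pi)(e^g).
\]
Applied with $g=\lambda(\mathbb{P}D_n^S-D_n^S)$, which depends only on $(\theta,X^n)$ once $Y^n$ is fixed on the high-probability event, and using $(\pi_\theta\pi)(e^g)=m_X(e^g)$ since $g$ is $X^n$-measurable, division by $\lambda$ gives
\[
\varrho(\mathbb{P}D_n^S)\leq\varrho(D_n^S)+\frac{1}{\lambda}\mathcal{K}(\varrho,\pi_\theta\pi)+\frac{f(n,\lambda)}{\lambda}+\frac{1}{\lambda}\log\frac{1}{\epsilon}.
\]
Finally, A\ref{assu:distance} combined with Jensen's inequality on the jointly convex $d$ gives
\[
d(\varrho(S),\mathbb{P}(S))\leq\int\int d(S(x),S(y))\,\varrho(dx)\mathbb{P}(dy)=\varrho(\mathbb{P}D_n^S),
\]
while by definition $\varrho(D_n^S)=\varrho(d(S,S(Y^n)))$. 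Chaining these three inequalities produces the first claim.

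For the special case $\varrho=\varrho_\lambda$, all that remains is a direct computation of the KL divergence. Since $d\varrho_\lambda/d(\pi_\theta\pi)=e^{-\lambda D_n^S}/Z_{\lambda,\pi}$, one has
\[
\mathcal{K}(\varrho_\lambda,\pi_\theta\pi)=-\lambda\,\varrho_\lambda(D_n^S)-\log Z_{\lambda,\pi}.
\]
Plugging into the general bound, the terms $\varrho_\lambda(d(S,S(Y^n)))=\varrho_\lambda(D_n^S)$ and $-\varrho_\lambda(D_n^S)$ cancel exactly, leaving $-\lambda^{-1}\log Z_{\lambda,\pi}$ as the data-dependent term and yielding the second inequality. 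There is no real obstacle here; the only care needed is in the order of operations --- Markov on the $Y^n$-expectation first, then Donsker-Varadhan on the resulting deterministic exponential-moment inequality --- which is the usual PAC-Bayes bookkeeping.
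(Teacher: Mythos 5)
Your proposal is correct and follows essentially the same route as the paper: Markov's inequality on the integrated exponential moment from A1, the Donsker--Varadhan change of measure from $\pi_{\theta}\pi$ to $\varrho$ (the paper phrases this via the variational identity for $-\log\int e^{-h}d\pi$, which is the same device), and Jensen with the joint convexity of $d$. Your direct computation of $\mathcal{K}(\varrho_{\lambda},\pi_{\theta}\pi)$ with the cancellation of the $\varrho_{\lambda}(D_{n}^{S})$ terms is just an explicit rendering of the paper's appeal to the fact that $\varrho_{\lambda}$ attains the infimum $-\frac{1}{\lambda}\log Z_{\lambda,\pi}$, so no substantive difference there either.
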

\begin{proof}
The proof is given in Section \ref{prop:Emp}.
\end{proof}
Although the bound of Proposition \ref{prop:Emp} is computable from
the data it depends on the intractable integral $Z_{\lambda,\pi}$.
Given a probability level and inverse temperature we will give an
algorithm to compute an estimator of the upper-bound in sub-Section
\ref{subsec:Adaptive-bounds}. The reader familiar with SMC algorithms
will already recognize that we can compute the normalization constant
at almost no extra cost using this methodology. The empirical bound
will be used as a sanity check: it will allow us to get a bound on
the distance between moments showing that the pseudo-posterior actually
learns something from the data. For an example see Figure \ref{fig:emp-bound}.
The empirical bound gives a high probability upper bound of $d(\varrho_{\lambda,X}(S),\mathbb{P}(S))$,
if it is small it offers a garantee that $\inf_{\theta\in\Theta} d(\pi_\theta S,\mathbb{P}S)$ is
small too.

\subsection{Adaptive bounds\label{subsec:Adaptive-bounds}}

We have seen that the inverse temperature parameter is similar in
nature to the window of a Kernel-ABC algorithm. The main issue
with this, as for ABC, is the choice of the hyper-parameter. Some
authors (e.g. \cite{Ratman2009}) have considered putting a prior on the window in ABC. We use
a similar strategy and build a joint empirical bound on the sequence
of$\lambda$. The bound can then be minimized such as to obtain theoretical
guaranties on the adaptive algorithm. We therefore will define a prior
distribution $\nu$ on $(\mathcal{I},\mathcal{B}(\mathcal{I}))$.
The idea of using a prior to build a joint empirical bound that could
be minimized dates back to \cite{Catoni2007}. Although this strategy
is interesting in the sense that it reduces the dependence on the
hyper-parameter, it does make the assumption that the model is mis-specified.
In this sense it does allow one to recover faster rates in the case
where the model is actually correct. For strategies in the PAC-Bayesian
literature to adapt the parameter in such a way we refer the reader
to \cite{grunwald2011safe} (in the online scenario).

We start by defining the pseudo-posterior,
\begin{defn}
\label{def:adap-ABC}We call an adaptive-ABC (AdABC) pseudo posterior
at level $\epsilon$ the joint distribution
\[
\varrho_{\epsilon,\hat{\xi}(\lambda)}^{abc}(dX,d\theta)\propto e^{-\hat{\xi}(\lambda)d(S(X^{n}),S(Y^{n}))}\pi_{\theta}(dX^{n})\pi(d\theta),
\]

where the measure $\hat{\xi}$ on $(\mathcal{I},\mathcal{B}(\mathcal{I}))$
is defined as the minimizer of the empirical bound

\[
\inf_{\xi\in\mathcal{F}\subset\mathcal{M}_{+}^{1}(\mathcal{I})}\left\{ -\frac{1}{\xi(\lambda)}\log Z_{\epsilon,\xi}+\frac{1}{\xi(\lambda)}\left(\mathcal{K}(\xi,\nu)+\xi(f(n,\lambda))+\log\frac{1}{\epsilon}\right)\right\} 
\]

and $\mathcal{F}$ is a subset of the space of all probability measures
on $\mathcal{I}$.
\end{defn}
We obtain an aggregating measure $\hat{\xi}$ by minimizing a modification
of the empirical bound of proposition \ref{prop:Emp}. The minimization
is taken over a class of probability measures, of course the best
achievable bound would be the one minimizing $\xi$ over $\mathcal{M}_{+}^{1}(\mathcal{I})$,
this is however intractable. Instead we propose to use a variational
approximation, that is to replace the set of all probability measures
by a smaller but tractable subset. This is similar to the approach
taken by \cite{Alquier2015}, where the authors replace an exponential
weight aggregation with a variational approximation. There is obviously
a trade-off in the size of the family of approximation $\mathcal{F}$.
On one hand, we want it to remain small enough to be tractable, on
the other we want it to be large enough to get good theoretical properties. 

The algorithm depends on two quantities that need to be discussed.
First the function $f$ of Assumption A\ref{assu:Hoeffding} appears
in the variational approximation. Hence it will depend on the kind
of hypothesis that we have put on the probability $\mathbb{P}$. Second
we need to choose a level $\epsilon$ at which we want are theoretical
bounds to be true. In practice we observe that this level does not
have to much impact on the end estimator.

Using this estimator we can obtain a bound similar to that of Lemma
\ref{lem:bound},
\begin{lem}
\label{lem:oracle-adap}Suppose that assumptions A\ref{assu:Hoeffding},
A\ref{assu:distance} and A\ref{assu:Lipschitz} with constants $(\bar{\delta},L)$
are satisfied then for any $\delta<\bar{\delta}$ and $\epsilon>0$
we have with probability at least $1-\epsilon$,

\begin{multline*}
d(\varrho_{\epsilon,\hat{\xi}}(S),\mathbb{P}S)\leq\inf_{\theta\in\Theta}d(\pi_{\theta}(S),\mathbb{P}(S))+\sup_{\theta:\Vert \theta-\theta^\star\Vert}\pi_{\theta}\left\{ d(S,\pi_{\theta}S)\right\} +\delta L+\mathbb{P}\left\{ d(S,\mathbb{P}S)\right\} +\\
\inf_{\xi\in\mathcal{F}}\left[\frac{1}{\xi(\lambda)}\left\{ 2\xi\left[f(n,\lambda)\right]-2\log\pi\left(\left\{ \Vert\theta-\theta^{\star}\Vert<\delta\right\} \right)+2\mathcal{K}(\xi,\nu)+2\log\frac{2}{\epsilon}\right\} \right]
\end{multline*}
\end{lem}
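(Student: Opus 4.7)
The plan is to extend the PAC-Bayes argument underlying Lemma \ref{lem:bound} by adding a second variational layer over the inverse temperature $\lambda$, with prior $\nu$ and variational posterior $\xi$. The critical observation is that $\hat\xi$ has been defined as the minimizer over $\mathcal{F}$ of precisely the empirical bound that will appear on the right-hand side of the two-level PAC-Bayes inequality, so the $\inf_{\xi \in \mathcal{F}}$ comes out of the proof at no extra cost.

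First I would integrate Assumption A\ref{assu:Hoeffding} against the product prior $\pi_\theta \pi \otimes \nu$ via Fubini, then apply Markov's inequality to deduce that, with $\mathbb{P}$-probability at least $1-\epsilon/2$,
\[
(\pi_\theta\pi \otimes \nu)\bigl\{e^{-\lambda(D_n^S - \mathbb{P}D_n^S) - f(n,\lambda)}\bigr\} \leq 2/\epsilon.
\]
The Donsker-Varadhan variational formula applied to this product prior then yields, simultaneously over all $\varrho \in \mathcal{M}_1^+(\Theta \times \mathcal{X}^n)$ and all $\xi \in \mathcal{M}_1^+(\mathcal{I})$,
\[
\xi(\lambda)\bigl[\varrho(\mathbb{P}D_n^S) - \varrho(D_n^S)\bigr] - \xi(f(n,\lambda)) \leq \mathcal{K}(\varrho, \pi_\theta\pi) + \mathcal{K}(\xi, \nu) + \log(2/\epsilon).
\]
Using convexity of $d$ (Assumption A\ref{assu:distance}) to get $d(\varrho(S),\mathbb{P}S) \leq \varrho(\mathbb{P}D_n^S)$, dividing through by $\xi(\lambda)$, and minimizing over $\varrho$ by the Gibbs measure proportional to $e^{-\xi(\lambda)D_n^S}\pi_\theta\pi$ produces exactly the empirical objective of Definition \ref{def:adap-ABC}. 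Specializing $\xi$ to $\hat\xi$ and invoking its defining minimizing property turns the right-hand side into the advertised $\inf_{\xi \in \mathcal{F}}$.

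The remaining step is to convert this empirical bound into the stated oracle form, done exactly as in the proof of Lemma \ref{lem:bound}: rather than minimizing over $\varrho$, plug in the measure whose $\theta$-marginal is $\pi$ restricted to the ball $\{\|\theta-\theta^\star\| < \delta\}$ and whose conditional is $\pi_\theta$. This produces the prior-mass penalty $-\log \pi(\{\|\theta-\theta^\star\|<\delta\})/\xi(\lambda)$ together with an averaged term of the form $\int \pi_\theta(D_n^S)\tilde\pi(d\theta)$, which successive triangle inequalities split into the oracle bias $\inf_\theta d(\pi_\theta S,\mathbb{P}S)$, the statistic-concentration term $\sup_{\theta:\|\theta-\theta^\star\|<\delta}\pi_\theta\{d(S,\pi_\theta S)\}$, a Lipschitz contribution bounded by $L\delta$ via Assumption A\ref{assu:Lipschitz}, and the empirical remainder $d(\mathbb{P}S,S(Y^n))$, which is in turn replaced by its expectation $\mathbb{P}\{d(S,\mathbb{P}S)\}$ up to a Hoeffding-type deviation absorbed into the remaining $\epsilon/2$ budget via a union bound.

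The main technical obstacle is the careful bookkeeping of $\epsilon$ across the two deviation events (the two-level PAC-Bayes inequality and the concentration of $d(\mathbb{P}S,S(Y^n))$) together with ensuring that all extra fluctuation terms fit inside the pre-existing $\xi(f(n,\lambda))/\xi(\lambda)$ term instead of creating new $1/\xi(\lambda)$-scaled penalties that would deteriorate the stated bound. Beyond this bookkeeping, the argument is a direct two-parameter PAC-Bayes extension of Lemma \ref{lem:bound}, with $\nu$ on $\mathcal{I}$ playing exactly the same role that $\pi$ plays on $\Theta$.
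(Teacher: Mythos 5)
Your proposal follows essentially the same route as the paper: a two-level PAC-Bayes bound built from the product prior $\left[\pi_{\theta}\pi\right]\otimes\nu$, restriction to factorized posteriors $\varrho\otimes\xi$, minimization over $\varrho$ by the Gibbs measure (which recovers exactly the objective defining $\hat{\xi}$ in Definition \ref{def:adap-ABC}), and then the oracle decomposition via the restricted prior $\varrho_{\theta^{\star},\delta}$ as in the proof of Lemma \ref{lem:bound}. The only minor bookkeeping quibble is that the $\log\frac{2}{\epsilon}$ comes from the union bound over the two directions of the PAC-Bayes inequality, not from a separate concentration step for $d(\mathbb{P}S,S(Y^{n}))$: that term is already integrated against $\mathbb{P}(dY^{n})$ inside $\omega_{d}(\varrho_{\theta^{\star},\delta},\mathbb{P})$ and so appears directly as $\mathbb{P}\left\{ d(S,\mathbb{P}S)\right\}$ after the triangle inequality, with no extra deviation event required.
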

As for Lemma \ref{lem:bound} we recover the main elements that we
expect to find in such a result. The difference is that the upper
bound is now explicitly optimized over $\xi$. To obtain a result
on specific example we need to find a class of probability for which
we can find an explicit solution of this problem. The easiest case
will be a parametric family $\mathcal{F}_{b}$ indexed by some parameter
$b$. We need to ensure that the following terms $\xi(\lambda),$
$\xi\left[f(n,\lambda)\right]$ and $\mathcal{K}(\xi,\nu)$ are tractable
and ensure a convergence result at the correct speed. The next section
is devoted to two examples for which we can obtain rates of convergence. 

\section{Examples of bounds\label{sec:Examples-of-bounds}}

In the next two sub-sections we show how the above bounds can be applied
to specific models. We give convergence rates associated to some specific
priors on the distance between expected statistics. 

\subsection{Bounds for the $\ell_{p}$ norm }

In this section we will use a specific distance, one based on the
$p$-norm hence we define,
\[
d\left(S(X^{n}),S(Y^{n})\right):=\Vert S(X^{n})-S(Y^{n})\Vert_{p},
\]
where $\Vert.\Vert_{p}$ is the $p$-norm. The summary statistics
are assumed to be empirical moments of a collection of $m$ bounded
functions $H=\left(h_{1},\cdots,h_{m}\right)$. Therefore we put $S(Y^{n})=\frac{1}{n}\sum_{i=1}^{n}H(Y_{i})=\left(\frac{1}{n}\sum_{i=1}^{n}h_{1}(Y_{i}),\cdots,\frac{1}{n}\sum_{i=1}^{n}h_{m}(Y_{i})\right).$
The boundness assumption is embodied by the existence of a constant
$K<\infty$ such that for any $x\in\mathcal{X}$, $\sup_{i\in\left\{ 1,\cdots,m\right\} }\left|h_{i}(x)\right|<K.$
We need to ensure that under those conditions Assumption A\ref{assu:Hoeffding}
is satisfied.
\begin{lem}
\label{lem:McDiarmid1}Let $(Y_{i})_{i=1}^{n}$ be sampled according
to $\mathbb{P}$ and the different $S$ and $d$ be given as above
then we have the following result,

\[
\left.\begin{array}{c}
\mathbb{P}\left\{ e^{-\lambda\left(D_{n}^{S}-\mathbb{P}D_{n}^{S}\right)}\right\} \\
\mathbb{P}\left\{ e^{-\lambda\left(\mathbb{P}D_{n}^{S}-D_{n}^{S}\right)}\right\} 
\end{array}\right\} \leq\exp\left\{ \frac{\lambda^{2}K^{2}m^{\frac{2}{p}}}{n}\right\} .
\]
\end{lem}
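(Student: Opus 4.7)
The plan is to view $D_n^S = \|S(X^n)-S(Y^n)\|_p$ as a function of the independent coordinates $Y_1,\dots,Y_n$ (treating $X^n$ as a fixed parameter) and to apply McDiarmid's bounded differences inequality, in its MGF form (Azuma--Hoeffding for the Doob martingale). Since the final bound will be independent of $X^n$, the displayed inequality, though stated under $\mathbb{P}$, extends immediately to the $m_X\otimes\mathbb{P}$ version required in Assumption A\ref{assu:Hoeffding}.

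Concretely, fix $X^n$ and define $g(y_1,\dots,y_n) = \bigl\|S(X^n) - \tfrac1n\sum_{i=1}^n H(y_i)\bigr\|_p$. If we replace $y_i$ by some $y_i'$ the triangle inequality gives
\[
\bigl|g(y_1,\dots,y_i,\dots,y_n) - g(y_1,\dots,y_i',\dots,y_n)\bigr|
\le \tfrac1n \|H(y_i)-H(y_i')\|_p.
\]
Since each coordinate $h_j$ is bounded by $K$, we have $|h_j(y_i)-h_j(y_i')|\le 2K$, and summing $m$ such terms in the $p$-norm yields $\|H(y_i)-H(y_i')\|_p \le 2Km^{1/p}$. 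Hence $g$ has bounded differences with constants $c_i = 2Km^{1/p}/n$ for every $i$.

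The Azuma--Hoeffding inequality applied to the Doob martingale of $g$ under $\mathbb{P}^{\otimes n}$ then gives, for every $\lambda\in\mathbb{R}$,
\[
\mathbb{P}\bigl\{e^{\lambda(D_n^S - \mathbb{P}D_n^S)}\bigr\}
\le \exp\!\left(\frac{\lambda^2}{8}\sum_{i=1}^n c_i^2\right)
= \exp\!\left(\frac{\lambda^2 K^2 m^{2/p}}{2n}\right)
\le \exp\!\left(\frac{\lambda^2 K^2 m^{2/p}}{n}\right),
\]
and the same estimate holds with $\lambda$ replaced by $-\lambda$, yielding both displayed inequalities. The argument is symmetric, so no extra work is needed for the second line of the lemma. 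The only subtlety—and it is a minor one—is the computation of the bounded-differences constant: one must combine the triangle inequality for $\|\cdot\|_p$ with the elementary estimate $(\sum_{j=1}^m (2K)^p)^{1/p} = 2Km^{1/p}$, which is what produces the dependence on $m^{2/p}$ in the concentration bound (and explains why the dimensional cost degrades as $p$ increases).
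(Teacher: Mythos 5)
Your proof is correct and follows essentially the same route as the paper: the reverse triangle inequality gives the bounded-differences constant $2Km^{1/p}/n$ in each coordinate of $Y^n$, and the MGF form of the bounded differences inequality (Theorem 6.2 of \cite{Boucheron2013}) yields the stated sub-Gaussian bound, with your explicit constant $\lambda^2K^2m^{2/p}/(2n)$ in fact slightly sharper than the one displayed in the lemma. Your added remark that the bound is uniform in $X^n$, so that integration against $m_X$ is immediate, is a useful clarification the paper leaves implicit.
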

We have already noted that Assumption A\ref{assu:distance} is trivially
satisfied in the case of distances based on norms. Therefore we can
apply Lemma \ref{lem:bound} to any model that verifies the $(\bar{\delta},L,\alpha)$-Holder
Assumption A\ref{assu:Lipschitz}, with $f(n,\lambda)=2\frac{\lambda^{2}K^{2}m^{\frac{2}{p}}}{n}$
.
\begin{thm}
\label{thm:euclidean} Suppose that the prior model $\pi_{\theta}$
satisfies Assumption A\ref{assu:Lipschitz} with $(\bar{\delta},L)$,
that the model is independent $\pi_{\theta}(X^{n})dX^{n}=\prod_{i=1}^{n}\pi_{\theta}(X_{i})dX^{n}$,
and put 
\[
C:=\sqrt{K(p,m)\left[\max_{j\leq m}\mathbb{P}\left\{ \left(h_{j}-\mathbb{P}h_{j}\right)^{2}\right\} \vee\max_{j\leq m}\mathbb{\pi_{\theta^{\star}}}\left\{ \left(h_{j}-\mathbb{\pi_{\theta^{\star}}}h_{j}\right)^{2}\right\} \right]},
\]
 where $K(p,m):=\min\left(m,2e\log m,p-1\right)$, then we have for
any $\delta<\bar{\delta}$ with probability at least $1-\epsilon$,
\begin{multline*}
\Vert\varrho_{\lambda}^{ABC}(H)-\mathbb{P}(H)\Vert_{p}\leq\inf_{\theta\in\Theta}\Vert\pi_{\theta}(H)-\mathbb{P}(H)\Vert_{p}+\frac{2Cm^{\frac{1}{p}+1}}{\sqrt{n}}+L\delta+\frac{2\lambda K^{2}m^{\frac{2}{p}}}{n}-\frac{2}{\lambda}\log\pi\left(\left\{ \Vert\theta-\theta^{\star}\Vert<\delta\right\} \right)+\frac{2}{\lambda}\log\frac{2}{\epsilon}.
\end{multline*}
\end{thm}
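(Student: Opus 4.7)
The plan is to specialize Lemma~\ref{lem:bound} to the $\ell_p$-setting: verify the three standing assumptions, substitute the concrete $f(n,\lambda)$, and then bound the two ``fluctuation'' expectations that remain inside the lemma.

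First I would verify Assumption~A\ref{assu:Hoeffding}. Lemma~\ref{lem:McDiarmid1} supplies the sub-Gaussian control of $D_n^S$ directly under $\mathbb{P}$, yielding $f(n,\lambda)=\lambda^2 K^2 m^{2/p}/n$. Because the prior model is the i.i.d.\ product measure $\pi_\theta^n$ with the same coordinate-wise bound $K$ and the same $\ell_p$-distance, an identical McDiarmid / bounded-differences argument applies to a sample drawn from $\pi_\theta$ for any fixed $\theta$; integrating against $\pi(d\theta)$ via Fubini produces the analogous bound under $m_X$, and hence the required estimate under $m_X\otimes\mathbb{P}$. Assumption~A\ref{assu:distance} is immediate for a norm-based distance, and A\ref{assu:Lipschitz} is a hypothesis of the theorem.

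The substantive step is to bound the two fluctuation terms $\mathbb{P}\{\|S-\mathbb{P}S\|_p\}$ and $\sup_{\|\theta-\theta^\star\|<\delta}\pi_\theta\{\|S-\pi_\theta S\|_p\}$, each of which is the expected $\ell_p$-norm of a centred average of $n$ i.i.d.\ bounded random vectors in $\mathbb{R}^m$. I would use a moment inequality for such sums valued in $\ell_p^m$ that combines three regimes: (i) for small $p$, Rosenthal / Marcinkiewicz--Zygmund coordinate-wise with best constant $\sqrt{p-1}$; (ii) for moderate $p$, a sub-Gaussian maximal inequality giving $\sqrt{2e\log m}$; (iii) the trivial comparison $\|\cdot\|_p\le m^{1/p}\|\cdot\|_\infty$ coupled with a coordinate-wise variance bound, giving $\sqrt m$. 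Taking the minimum of the three constants produces $\sqrt{K(p,m)}$; together with the coordinate-wise variance $\max_j\sigma_j^2$, the aggregation factor across the $m$ coordinates, and the normalisation $1/\sqrt n$, this yields a bound of the stated form $Cm^{1/p+1}/\sqrt n$. The supremum over $\{\|\theta-\theta^\star\|<\delta\}$ is reduced to the value of the variance at $\theta^\star$ using Assumption~A\ref{assu:Lipschitz}, so the correction is of order $L\delta$ and is absorbed into the $L\delta$ term that already appears in Lemma~\ref{lem:bound}.

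The remainder is algebraic: substituting $2f(n,\lambda)/\lambda=2\lambda K^2 m^{2/p}/n$ and the two expectation bounds into Lemma~\ref{lem:bound}, and carrying through the prior-mass and $\log(2/\epsilon)$ terms unchanged, gives the theorem. The main technical obstacle is the $\ell_p^m$-moment bound with the sharp constant $\sqrt{K(p,m)}$ unifying the three regimes of $p$ relative to $m$; once this is in place the rest of the proof is assembly.
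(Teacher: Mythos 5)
Your proposal is correct and follows essentially the same route as the paper: specialize Lemma \ref{lem:bound} with $f(n,\lambda)=\lambda^{2}K^{2}m^{2/p}/n$ from Lemma \ref{lem:McDiarmid1}, then bound the two fluctuation terms via Jensen followed by a second-moment inequality for centred sums in $\ell_p^m$. The three-regime moment bound you describe (constants $\sqrt{p-1}$, $\sqrt{2e\log m}$, $\sqrt{m}$, minimized to give $\sqrt{K(p,m)}$) is precisely Nemirovski's inequality, which the paper simply cites from \cite{Boucheron2013}; the remaining assembly is identical.
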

\begin{rem}
The dependence in $m$ is not optimal, by doing the calculation in
the specific case of $p=2$ one would get a dependence in $m^{\frac{1}{2}}$
rather than $m^{\frac{3}{2}}$. 
\end{rem}
\begin{rem}
Under the assumptions on $h$ constant $C$  and that $p-1<m$ the constant $C$ is independent of the dimension of the problem.
\end{rem}

It remains to choose a prior putting sufficient mass around $\theta^{\star},$
and to optimize the bound in $\delta$, $\lambda$ and the hyper-parmeters
of prior. For ease of exposition let us take a model with parameter
space $\Theta=\mathbb{R}^{d}$ and a Gaussian prior with mean $0$
and isotropic variance $\vartheta I_{p}.$ In this case we can get
the following corollary,
\begin{cor}
\label{cor:euclidean-Gaussian} Assume the hypotheses of Theorem \ref{thm:euclidean},
in addition suppose that $\Theta=\mathbb{R}^{d}$ with the Euclidean
norm and that $\pi(d\theta)=\Phi_{d}(d\theta;0_{d},\vartheta I_{d})$
with $\vartheta>0$, and $\left\Vert \theta^{\star}\right\Vert _{2}\leq1$.
Put $\lambda=\sqrt{\frac{dn}{K^{2}m^{\frac{2}{p}}}}$ and $\delta=\sqrt{\frac{\vartheta}{n}}$
then with probability at least $1-\epsilon$,

\begin{multline*}
\Vert\varrho_{\lambda}^{ABC}(H)-\mathbb{P}(H)\Vert_{p}\leq\inf_{\theta\in\Theta}\Vert\pi_{\theta}(H)-\mathbb{P}(H)\Vert_{p}+\frac{2Cm^{\frac{1}{p}+1}}{\sqrt{n}}+L\delta+2K\sqrt{\frac{d}{n}}m^{\frac{1}{p}}+\\
2K\sqrt{\frac{d}{n}}m^{\frac{1}{p}}\left\{ \frac{1}{2}\log\left(8\pi nd\right)+\frac{1}{\vartheta}+\frac{1}{nd}\right\} +2K\frac{m^{\frac{1}{p}}}{\sqrt{dn}}\log\frac{2}{\epsilon}.
\end{multline*}
\end{cor}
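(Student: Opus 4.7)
The plan is to simply specialize Theorem \ref{thm:euclidean} to the choices $\lambda=\sqrt{dn/(K^{2}m^{2/p})}$, $\delta=\sqrt{\vartheta/n}$ and the Gaussian prior $\pi=\Phi_{d}(\cdot;0_{d},\vartheta I_{d})$. Two of the five terms on the right-hand side of Theorem \ref{thm:euclidean} are trivial to re-express: substituting the chosen $\lambda$ gives
\[
\frac{2\lambda K^{2}m^{2/p}}{n}=2K\sqrt{\tfrac{d}{n}}\,m^{1/p},\qquad \frac{2}{\lambda}\log\frac{2}{\epsilon}=\frac{2Km^{1/p}}{\sqrt{dn}}\log\frac{2}{\epsilon},
\]
and $L\delta$ together with $\inf_{\theta}\|\pi_{\theta}H-\mathbb{P}H\|_{p}+\frac{2Cm^{1/p+1}}{\sqrt{n}}$ are carried over from the previous theorem verbatim. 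So the only real work is to control the prior-mass term $-\tfrac{2}{\lambda}\log\pi(\{\|\theta-\theta^{\star}\|<\delta\})$.

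For this last term I would lower bound the prior ball probability by the minimum density on the ball times the ball's Lebesgue volume. Concretely, for $\theta$ in the Euclidean ball of radius $\delta$ around $\theta^{\star}$, the parallelogram inequality $\|\theta\|^{2}\leq 2\|\theta^{\star}\|^{2}+2\|\theta-\theta^{\star}\|^{2}$ and the assumption $\|\theta^{\star}\|_{2}\leq 1$ yield the density lower bound
\[
(2\pi\vartheta)^{-d/2}e^{-\|\theta\|^{2}/(2\vartheta)}\;\geq\;(2\pi\vartheta)^{-d/2}\,\exp\!\left(-\tfrac{\|\theta^{\star}\|^{2}+\delta^{2}}{\vartheta}\right)\;\geq\;(2\pi\vartheta)^{-d/2}e^{-1/\vartheta-1/n},
\]
where the last step inserts $\delta^{2}=\vartheta/n$ and $\|\theta^{\star}\|^{2}\leq 1$. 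Multiplying by the volume $V_{d}\delta^{d}=\frac{\pi^{d/2}}{\Gamma(d/2+1)}\delta^{d}$ and taking $-\log$ gives
\[
-\log\pi(\{\|\theta-\theta^{\star}\|<\delta\})\;\leq\;\tfrac{1}{\vartheta}+\tfrac{1}{n}+\tfrac{d}{2}\log(2\vartheta)+\log\Gamma(d/2+1)-\tfrac{d}{2}\log\pi-d\log\delta.
\]
After substituting $\delta^{2}=\vartheta/n$ the $\vartheta$'s cancel in $\tfrac{d}{2}\log(2\vartheta)-d\log\delta=\tfrac{d}{2}\log(2n)$, and I would use Stirling in the form $\log\Gamma(d/2+1)\leq\tfrac{d}{2}\log(d/2)+\tfrac{1}{2}\log(\pi d)$ to gather everything into a bound of the shape $\tfrac{1}{\vartheta}+\tfrac{1}{n}+\tfrac{d}{2}\log(8\pi nd)$ (plus a vanishing lower-order term).

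The final step is arithmetic: multiply by $\tfrac{2}{\lambda}=\tfrac{2Km^{1/p}}{\sqrt{dn}}$ and factor out $2K\sqrt{d/n}\,m^{1/p}$, which pulls the $d/2$ in front of $\log(8\pi nd)$ down to $\tfrac12$ and produces the $\tfrac{1}{\vartheta}+\tfrac{1}{nd}$ remainder announced in the statement. I expect the only delicate point to be keeping the dimensional constants in Stirling's approximation of $\Gamma(d/2+1)$ small enough to be absorbed into the $\log(8\pi nd)$ term cleanly; everything else is a substitution.
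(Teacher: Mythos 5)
Your proposal is correct, and the substitution steps (the $\tfrac{2\lambda K^{2}m^{2/p}}{n}$ and $\tfrac{2}{\lambda}\log\tfrac{2}{\epsilon}$ terms, and carrying over the first three terms of Theorem \ref{thm:euclidean}) coincide with the paper's. Where you genuinely diverge is in the one nontrivial step, the lower bound on $\pi\left(\left\{\Vert\theta-\theta^{\star}\Vert<\delta\right\}\right)$. The paper inscribes the $\ell_{\infty}$ hypercube of half-side $\delta/\sqrt{d}$ inside the Euclidean ball, uses independence of the coordinates under the isotropic Gaussian to tensorize, and bounds each one-dimensional interval probability by its length times the standard normal density at the endpoint farthest from the origin; this produces $d\log\bigl(\tfrac{2\sqrt{2\pi\vartheta d}}{\delta}\bigr)+\tfrac{d}{\vartheta}+\tfrac{\delta^{2}}{\vartheta}$ directly, with no volume formula or Stirling needed. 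You instead bound the $d$-dimensional ball probability by its Lebesgue volume $\tfrac{\pi^{d/2}}{\Gamma(d/2+1)}\delta^{d}$ times the minimum density on the ball (via the parallelogram inequality and $\Vert\theta^{\star}\Vert_{2}\le 1$), and then control $\log\Gamma(d/2+1)$ by Stirling. Both routes land on a bound of the form $\tfrac{d}{2}\log(8\pi nd)+\text{(remainder)}$; your check that $\tfrac{1}{2}\log(\pi d)\le\tfrac{d}{2}\log(8\pi)$ for all $d\ge 1$ is the only point where a constant needs to be absorbed, and it holds. In fact your remainder is $\tfrac{1}{\vartheta}+\tfrac{1}{n}$ rather than the paper's $\tfrac{d}{\vartheta}+\tfrac{1}{n}$, so after multiplying by $\tfrac{2}{\lambda}$ your bound is slightly sharper than the stated $2K\sqrt{d/n}\,m^{1/p}\cdot\tfrac{1}{\vartheta}$ term and therefore implies the corollary. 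The trade-off is that the paper's tensorization is more elementary (no $\Gamma$-function asymptotics) and generalizes immediately to non-isotropic product priors, while yours is the more standard ``volume times minimal density'' small-ball argument and scales better in $\vartheta$.
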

The choice of lambda is taken as to optimize the rate of the oracle inquality, the same rate can be achieved using the adaptive method presented in Theorem \ref{thm:Adap}.
Ignoring the log terms we get a rate of $\mathcal{O}\left(\sqrt{\frac{d}{n}}m^{\frac{2}{p}}\right)+\mathcal{O}\left(\sqrt{\frac{1}{n}}m^{\frac{1}{p}+1}\right).$
The dependence in the size of the statistic comes from the convergence
in the Euclidean norm of the statistics themselves. The dependence
in $n$ and $d$ is expected although we do not have theoretical result
proving optimality in this context.

It remains to prove that this result can also be achieved using AdABC.
As for the above theorem, based on Lemma \ref{lem:McDiarmid1} we
can apply the result of Lemma \ref{lem:oracle-adap} to the framework
of this section. 
\begin{thm}
\label{thm:Adap}Suppose that the model $\pi_{\theta}$ satisfies
Assumption A\ref{assu:Lipschitz}, that the model is independent $\pi_{\theta}(X^{n})dX^{n}=\prod_{i=1}^{n}\pi_{\theta}(X_{i})dX^{n}$,
and put 
\[
C:=\sqrt{K(p,m)\left[\max_{j\leq m}\mathbb{P}\left\{ \left(h_{j}-\mathbb{P}h_{j}\right)^{2}\right\} \vee\max_{j\leq m}\mathbb{\pi_{\theta^{\star}}}\left\{ \left(h_{j}-\mathbb{\pi_{\theta^{\star}}}h_{j}\right)^{2}\right\} \right]}
\]
 where $K(p,m):=\min\left(m,2e\log m,p-1\right)$, then for we have
with probability at least $1-\epsilon$, 
\begin{multline*}
\left\Vert \varrho_{\epsilon,\hat{\xi}}(H)-\mathbb{P}H\right\Vert _{p}\leq\inf_{\theta\in\Theta}\left\Vert \pi_{\theta}(H)-\mathbb{P}(H)\right\Vert _{p}+2\frac{Cm^{\frac{1}{p}+1}}{\sqrt{n}}+L\delta\\
+\inf_{\xi\in\mathcal{F}}\left[\frac{1}{\xi(\lambda)}\left\{ 2\xi\left[\lambda^{2}\right]\frac{K^{2}m^{\frac{2}{p}}}{n}-2\log\pi\left(\left\{ \Vert\theta-\theta^{\star}\Vert<\delta\right\} \right)+2\mathcal{K}(\xi,\nu)+2\log\frac{2}{\epsilon}\right\} \right]
\end{multline*}
\end{thm}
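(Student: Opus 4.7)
The plan is to invoke the abstract adaptive oracle inequality of Lemma \ref{lem:oracle-adap} and specialize each of its terms to the $\ell_{p}$-norm / empirical-moment setting of this section. In spirit the argument is the adaptive counterpart of the proof of Theorem \ref{thm:euclidean}, so most ingredients can be borrowed from there.

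First I would check that the hypotheses of Lemma \ref{lem:oracle-adap} are satisfied. Assumption A\ref{assu:distance} holds because $\Vert\cdot\Vert_{p}$ is a norm and hence jointly convex; Assumption A\ref{assu:Lipschitz} with constants $(\bar\delta,L)$ is part of the statement of Theorem \ref{thm:Adap}. For Assumption A\ref{assu:Hoeffding}, Lemma \ref{lem:McDiarmid1} delivers, for each fixed $X^{n}$, an exponential-moment bound under $\mathbb{P}$ whose exponent $\lambda^{2}K^{2}m^{2/p}/n$ does not depend on $X^{n}$; integrating over $X^{n}\sim m_{X}$ therefore preserves the bound, so Assumption A\ref{assu:Hoeffding} holds with $f(n,\lambda)=\lambda^{2}K^{2}m^{2/p}/n$. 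Plugging this choice of $f$ into the penalty term of Lemma \ref{lem:oracle-adap} turns the quantity $2\xi[f(n,\lambda)]$ into $2\xi[\lambda^{2}]K^{2}m^{2/p}/n$, which is exactly the term appearing inside the bracketed infimum of Theorem \ref{thm:Adap}.

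It then remains to bound the two fluctuation terms $\sup_{\theta:\Vert\theta-\theta^{\star}\Vert<\delta}\pi_{\theta}\{\Vert S-\pi_{\theta}S\Vert_{p}\}$ and $\mathbb{P}\{\Vert S-\mathbb{P}S\Vert_{p}\}$ left behind by Lemma \ref{lem:oracle-adap}. Since $S$ is the empirical mean of the bounded vector-valued function $H$ under i.i.d.\ sampling (by the independence of the prior model for the first term, and by the data assumption for the second), each centered coordinate is an average of $n$ independent, centered, $[-2K,2K]$-bounded variables. A Khintchine / Marcinkiewicz--Zygmund moment inequality then yields $(\mathbb{E}|S_{j}-\mathbb{E}S_{j}|^{p})^{1/p}\leq\sqrt{K(p,m)\,\mathrm{Var}(h_{j})/n}$ with $K(p,m)=\min(m,2e\log m,p-1)$, and combining the $m$ coordinates through the triangle inequality in $\ell_{p}$ produces the common upper bound $Cm^{1/p+1}/\sqrt{n}$ for each term. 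Adding them produces the $2Cm^{1/p+1}/\sqrt{n}$ contribution in the statement.

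The main obstacle is precisely the fluctuation-term calculation above: getting the correct scaling in $m$ together with the constant $K(p,m)$ requires the Khintchine-type moment bound for sums of bounded independent variables and a careful $\ell_{p}/\ell_{2}$ norm comparison. This step, however, is essentially the same one already carried out inside the proof of Theorem \ref{thm:euclidean} and can be reused verbatim; once it is granted, the remainder of the argument is a mechanical substitution into Lemma \ref{lem:oracle-adap}.
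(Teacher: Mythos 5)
Your proposal is correct and follows the same route as the paper: invoke Lemma \ref{lem:oracle-adap} with $f(n,\lambda)=\lambda^{2}K^{2}m^{2/p}/n$ supplied by Lemma \ref{lem:McDiarmid1} (noting, as you rightly do, that the bound is uniform in $X^{n}$ so integration against $m_{X}$ costs nothing), and then absorb the two fluctuation terms into $2Cm^{\frac{1}{p}+1}/\sqrt{n}$ exactly as in the proof of Theorem \ref{thm:euclidean}. The only cosmetic difference is that the paper obtains the constant $K(p,m)=\min(m,2e\log m,p-1)$ via Nemirovski's inequality applied to the vector-valued sum (after Jensen), rather than a coordinate-wise Khintchine bound plus the triangle inequality, but since you defer to reusing the Theorem \ref{thm:euclidean} computation verbatim this does not affect the argument.
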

\begin{proof}
We start by applying Lemma \ref{lem:oracle-adap}, with probability
at least $1-\epsilon$,
\begin{multline*}
d(\varrho_{\epsilon,\hat{\xi}}(S),\mathbb{P}S)\leq\inf_{\theta\in\Theta}d(\pi_{\theta}(S),\mathbb{P}(S))+\pi_{\theta^{\star}}\left\{ d(S,\pi_{\theta^{\star}}S)\right\} e^{L\delta^{\alpha}}+\mathbb{P}\left\{ d(S,\mathbb{P}S)\right\} \\
+\inf_{\xi\in\mathcal{F}}\left[\frac{1}{\xi(\lambda)}\left\{ 2\xi\left[\frac{\lambda^{2}K^{2}m^{\frac{2}{p}}}{n}\right]-2\log\pi\left(\left\{ \Vert\theta-\theta^{\star}\Vert<\delta\right\} \right)+2\mathcal{K}(\xi,\nu)+2\log\frac{2}{\epsilon}\right\} \right].
\end{multline*}

As for Theorem \ref{thm:euclidean} we can easily bounds the terms
$\pi_{\theta^{\star}}\left\{ d(S,\pi_{\theta^{\star}}S)\right\} $
and $\mathbb{P}\left\{ d(S,\mathbb{P}S)\right\} $. This yields the
result.
\end{proof}
We can apply Theorem \ref{thm:Adap} to the case of Gaussian prior
on $\Theta=\mathbb{R}^{d}$ as before. We will need to add to things
to the previous result. First we need a prior on the inverse temperature.
Second we need a family of approximating distribution. As an example
set the prior to be an exponential distribution with parameter $\alpha$.
The family of approximation is taken also as an exponential distribution
$\mathcal{F}=\left\{ \xi(d\lambda)=\beta e^{-\beta\lambda}d\lambda,\beta\in\mathbb{R}_{+}\right\} $.
We have the following corollary,
\begin{cor}
\label{cor:adap-gaussian }Assume the hypotheses of theorem \ref{thm:euclidean},
in addition suppose that $\Theta=\mathbb{R}^{d}$ with the Euclidean
norm and that $\pi(d\theta)=\Phi_{d}(d\theta;0_{d},\vartheta I_{d})$
with $\vartheta>0$, and $\left\Vert \theta^{\star}\right\Vert _{2}\leq1$.
Let $\nu(d\lambda)=\alpha e^{-\alpha\lambda}$ and $\mathcal{F}=\left\{ \xi(d\lambda)=\beta e^{-\beta\lambda}d\lambda,\beta\in\mathbb{R}_{+}\right\} $
and put $\delta=\sqrt{\frac{d}{n}}<\bar{\delta}$ then with probability
at least $1-\epsilon$,
\begin{multline*}
\left\Vert \varrho_{\epsilon,\hat{\xi}}(H)-\mathbb{P}H\right\Vert _{p}\leq\inf_{\theta\in\Theta}\left\Vert \pi_{\theta}(S)-\mathbb{P}(S)\right\Vert _{p}+\frac{2Cm^{\frac{1}{p}}}{\sqrt{n}}e^{L\left(\frac{\vartheta}{n}\right)^{\alpha}}+4Km^{\frac{1}{p}}\sqrt{\frac{d}{n}}+2Km^{\frac{1}{p}}\sqrt{\frac{d}{n}}\left\{ \frac{1}{2}\log\left(8\pi dn\right)+\frac{1}{\vartheta}+\frac{1}{dn}\right\} \\
+\frac{Km^{\frac{1}{p}}}{\sqrt{nd}}\log\left(\frac{K^{2}m^{\frac{2}{p}}}{nd\alpha}\right)+2\frac{Km^{\frac{1}{p}}}{\sqrt{nd}}\log\frac{2}{\epsilon}.
\end{multline*}
\end{cor}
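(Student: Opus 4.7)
The plan is to start from the conclusion of Theorem \ref{thm:Adap} with the fixed choice $\delta=\sqrt{d/n}<\bar\delta$, and then evaluate each of the three data-independent quantities $\xi(\lambda)$, $\xi(\lambda^{2})$ and $\mathcal{K}(\xi,\nu)$ explicitly for $\xi\in\mathcal{F}$, together with the prior mass of a ball around $\theta^{\star}$, and finally to optimise the resulting expression over the one-dimensional family $\mathcal F=\{\mathrm{Exp}(\beta):\beta>0\}$.

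First I would compute the moments of an exponential variable under $\xi(d\lambda)=\beta e^{-\beta\lambda}d\lambda$: $\xi(\lambda)=1/\beta$ and $\xi(\lambda^{2})=2/\beta^{2}$. The Kullback--Leibler divergence between two exponentials is a direct calculation,
\[
\mathcal{K}(\xi,\nu)=\log(\beta/\alpha)+\alpha/\beta-1.
\]
Second, I would bound the Gaussian prior mass exactly as is done in the proof of Corollary \ref{cor:euclidean-Gaussian}: using $\Vert\theta^{\star}\Vert_{2}\leq 1$, the density of $\mathcal{N}(0,\vartheta I_{d})$ is uniformly bounded below on $B(\theta^{\star},\delta)$ by $(2\pi\vartheta)^{-d/2}\exp\{-(1+\delta)^{2}/(2\vartheta)\}$, so multiplying by the volume of the Euclidean ball and using Stirling's formula for $\log\Gamma(d/2+1)$ yields, for $\delta=\sqrt{d/n}$,
\[
-\log\pi(\{\Vert\theta-\theta^{\star}\Vert<\delta\})\ \leq\ \tfrac{d}{2}\log(8\pi dn)+\tfrac{1}{\vartheta}+\tfrac{1}{dn}+O(1).
\]

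Third, I would plug these quantities into the $\inf_{\xi\in\mathcal F}$ of Theorem \ref{thm:Adap}. Writing the resulting function of $\beta$ schematically as $g(\beta)=A/\beta+\beta\, B(\beta)+2\alpha$, with $A=4K^{2}m^{2/p}/n$ and $B(\beta)=2[-\log\pi(\cdot)+\log(\beta/\alpha)-1+\log(2/\epsilon)]$ (up to the lower-order $2\alpha/\beta$ term), the balancing choice is $\beta_{\star}\asymp\sqrt{nd}/(Km^{1/p})$. Since $B(\beta_{\star})$ is dominated by the prior term of order $d\log(nd)$, this yields $g(\beta_{\star})$ of order $Km^{1/p}\sqrt{d/n}$ times logarithmic factors, and evaluating the $\log(\beta_{\star}/\alpha)$ contribution produces the $\frac{K m^{1/p}}{\sqrt{nd}}\log\bigl(K^{2}m^{2/p}/(nd\alpha)\bigr)$ term in the statement. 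Finally, to handle the remaining terms $\pi_{\theta^{\star}}\{d(S,\pi_{\theta^{\star}}S)\}$ and $\mathbb{P}\{d(S,\mathbb{P}S)\}$ inherited from Theorem \ref{thm:Adap}, I would apply Lemma \ref{lem:McDiarmid1} (or its analogue for the prior model) together with the same variance control constant $C$ used in Theorem \ref{thm:euclidean}, giving a contribution of order $2Cm^{1/p}/\sqrt{n}$ modulated by the Lipschitz factor picked up inside the $\delta$-ball, which is exactly the $e^{L(\vartheta/n)^{\alpha}}$ prefactor inherited from the proof of Theorem \ref{thm:Adap}.

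The main obstacle I foresee is not any individual step but the bookkeeping of constants: tracking the exact numerical factors from Stirling, from the choice $\delta=\sqrt{d/n}$, and from the optimum $\beta_{\star}$ through the KL term so that the claimed expression (in particular the constants multiplying each $Km^{1/p}\sqrt{d/n}$ and the argument of the logarithm) is recovered. A secondary subtlety is that the optimum in $\beta$ is only implicit through $\log(\beta/\alpha)$; I would handle this by fixing the order-optimal $\beta_{\star}$ explicitly and using it as a \emph{sub}optimal but admissible element of $\mathcal F$, which still yields an upper bound because $\hat\xi$ is defined as the infimum over $\mathcal F$.
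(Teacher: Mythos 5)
Your proposal follows essentially the same route as the paper: start from Theorem \ref{thm:Adap}, compute $\xi(\lambda)=1/\beta$, $\xi(\lambda^{2})=2/\beta^{2}$ and $\mathcal{K}(\xi,\nu)=\log(\beta/\alpha)+\alpha/\beta-1$, reuse the Gaussian small-ball bound from Corollary \ref{cor:euclidean-Gaussian}, bound the residual variance terms via the constant $C$ as in Theorem \ref{thm:euclidean}, and plug in an explicit admissible $\beta$ rather than solving the implicit optimisation (the paper does exactly this too).

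One concrete slip: you state the balancing choice as $\beta_{\star}\asymp\sqrt{nd}/(Km^{1/p})$, which is the reciprocal of the correct value. Since the outer factor is $1/\xi(\lambda)=\beta$, the quantity to minimise is $g(\beta)=A/\beta+\beta B$ with $A=4K^{2}m^{2/p}/n$ and $B\asymp d\log(nd)$, whose minimiser is $\sqrt{A/B}\asymp Km^{1/p}/\sqrt{nd}$ -- the paper takes $\beta=\sqrt{K^{2}m^{2/p}/(nd)}$. With your stated $\beta_{\star}$ the term $\beta_{\star}B$ would be of order $\sqrt{nd}\,d\log(nd)/(Km^{1/p})$ and the bound would diverge; the downstream terms you then write down (the $4Km^{1/p}\sqrt{d/n}$ term and the $\tfrac{Km^{1/p}}{\sqrt{nd}}\log(K^{2}m^{2/p}/(nd\alpha))$ term) are precisely those produced by the \emph{correct} $\beta$, so this reads as an inversion typo rather than a conceptual error, but it needs fixing before the computation goes through.
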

Again ignoring log terms we get the same rate as for the non adaptive
case. We have to impose that the parameter of the approximated posterior
over the inverse temperature is larger than the prior. In practice
this is not a problem as the Kullback Leibler terms is infinite otherwise.
\begin{rem}
The dependence of the bound on the dimension of the statistics suggests
a bias-variance trade-off in the size of the statistics. The latter
has an effect on how small the term $\inf_{\theta\in\Theta}\left\Vert \pi_{\theta}(S)-\mathbb{P}(S)\right\Vert _{p}$
can be made and the rate of convergence $\mathcal{O}\left(\sqrt{\frac{d}{n}}m^{\frac{2}{p}}\right)+\mathcal{O}\left(\sqrt{\frac{1}{n}}m^{\frac{1}{p}+1}\right).$ 
\end{rem}

\subsection{Nonparametric bound}

We propose an application of the bounds in the case where $\Theta=\mathcal{H}^{\beta}\left(\left[0,1\right]\right),$
the class of $\beta$- Holder function on $\left[0,1\right]$ (see p.6 \cite{tsybakov2003}). Suppose
that the data is generated by the following black box: generate an
iid $n$-sample $(\xi_{i})_{i=1}^{n}$ according to some distribution
$\mathbb{G}$ and observe $\left(Y_{i}\right)_{i}$ given by $Y_{i}=F_{i}(\xi_{i},f_{0}),$
were $f_{0}$ is some unknown function in $\mathcal{F}\neq\Theta$ and $F_{i}$ is a known application
in $\left[-K,K\right]\subset\mathbb{R}$. As before the probability
distribution of the sample is denoted $\mathbb{P}$. The summary statistic
is the whole sample and the distance between two samples of size $n$
is taken to be the empirical $L_{2}$ distance multiplied by $\frac{1}{\sqrt{n}}$
\[
d(X_{1}^{n},Y_{1}^{n})=\frac{1}{\sqrt{n}}\Vert X^{n}-Y^{n}\Vert_{2,n}=\sqrt{\frac{1}{n^{2}}\sum_{i=1}^{n}(X_{i}-Y_{i})^{2}}.
\]

The model that we have described allows us as for the previous section
to get a concentration inequality.
\begin{lem}
\label{lem:McDiarmid-nonparam}With the distance and the statistics
defined above, for any $\lambda\in\mathbb{R}_{+}$ we get 
\[
\left.\begin{array}{c}
\mathbb{P}\left\{ e^{\lambda\left(D_{n}^{S}-\mathbb{P}D_{n}^{S}\right)}\right\} \\
\mathbb{P}\left\{ e^{\lambda\left(\mathbb{P}D_{n}^{S}-D_{n}^{S}\right)}\right\} 
\end{array}\right\} \leq e^{\frac{\lambda^{2}K}{2n}}.
\]
\end{lem}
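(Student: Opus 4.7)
The plan is to apply McDiarmid's bounded differences inequality to the map $y^n \mapsto d(x^n,y^n)$ with $x^n$ held fixed, and then integrate out to obtain the claimed sub-Gaussian exponential moment. Since under $\mathbb{P}$ the observations $Y_i=F_i(\xi_i,f_0)$ are independent (because the $\xi_i$ are i.i.d.\ under $\mathbb{G}$ and the $F_i$ are deterministic) and take values in $[-K,K]$, McDiarmid's inequality will apply directly.

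The first step is to check the bounded differences property. Fix $x^n\in\mathbb{R}^n$ and set $\phi(y^n):=d(x^n,y^n)=\sqrt{n^{-2}\sum_{i=1}^n(x_i-y_i)^2}$. Replacing $y_i$ by $y'_i$ and keeping the other coordinates, the triangle inequality for the Euclidean norm gives
\[
|\phi(y^n)-\phi(y^{(i)\,n})|\;\leq\;\sqrt{\frac{(y_i-y'_i)^2}{n^2}}\;=\;\frac{|y_i-y'_i|}{n}\;\leq\;\frac{2K}{n},
\]
so $\phi$ has bounded differences with constants $c_i=2K/n$, and $\sum_i c_i^2=4K^2/n$.

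The second step is to convert the bounded differences property into the desired exponential moment bound. The standard sub-Gaussian form of McDiarmid (see e.g.\ Boucheron, Lugosi, Massart) yields, for any $\lambda\in\mathbb{R}$,
\[
\mathbb{P}\left\{e^{\lambda(\phi(Y^n)-\mathbb{P}\phi(Y^n))}\right\}\;\leq\;\exp\!\left(\frac{\lambda^2\sum_i c_i^2}{8}\right)\;=\;\exp\!\left(\frac{\lambda^2 K^2}{2n}\right),
\]
uniformly over the frozen $x^n$. Because the bound does not depend on $x^n$, integrating against $m_X$ (or any distribution on $X^n$) preserves it, which is the form required by Assumption A\ref{assu:Hoeffding}. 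Replacing $\lambda$ by $-\lambda$ gives the symmetric inequality.

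The only potential subtlety is the replacement of $K$ by $K^2$: the natural constant coming from McDiarmid is $K^2/(2n)$ (since each coordinate lives in an interval of length $2K$, contributing $(2K/n)^2$), and so the statement should be read with $K^2$ in place of $K$ (or, equivalently, $K$ absorbing a factor through the choice of normalisation of the $F_i$). No essential difficulty arises; the argument is entirely a one-line application of McDiarmid once the $2K/n$-Lipschitz coordinate estimate is in place.
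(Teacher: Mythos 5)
Your proof is correct and follows essentially the same route as the paper: the triangle inequality gives the coordinate-wise bounded-difference constant, and the sub-Gaussian moment form of McDiarmid's inequality (Boucheron--Lugosi--Massart) then yields the exponential moment bound, uniformly in the frozen $x^{n}$ so that integration against $m_{X}$ is harmless. The discrepancy you flag is genuine: the correct exponent is $\lambda^{2}K^{2}/(2n)$ rather than $\lambda^{2}K/(2n)$, and it traces to the paper's own proof writing the increment as $\sqrt{(Y_{i}-Y^{\prime})/n^{2}}\leq\sqrt{2K}/n$ --- dropping the square on $(Y_{i}-Y^{\prime})$ --- which is the source of the $K$ versus $K^{2}$ mismatch; your version is the right one.
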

In this section we propose a non-parametric model. We put a Gaussian
process prior on $\Theta$. We assume furthermore that this prior
is centered and that the kernel is Gaussian, $k(s,t):=\exp\left(-\left|s-t\right|^{2}\right)$.
For more developments on this kind of prior see \cite{Rasmussen2006}.
In the course of the proof we will see that we are not limited to
Gaussian kernels, but in fact Gaussian processes with exponential
tail spectral measure \cite{van2007bayesian}.
\begin{defn}
Given a Gaussian process $W=\left(W_{t}:t\in[0,1]\right)$ we define
the re-scaled version for a scaling constant $c_{n}$ as 
\[
t\mapsto W_{t/c_{n}}.
\]
\end{defn}
We will use this re-scaled version to build a prior on $\Theta$.
We use the sup norm on $\mathcal{H}^{\beta}\left(\left[0,1\right]\right)$
we write in this section $\left\Vert f\right\Vert =\sup_{t\in[0,1]}\left|f(t)\right|.$
We will need to bound the small ball probability under the prior,
fortunately many results are known for this problem, we can use in
particular \cite{van2007bayesian}.
\begin{lem}
\label{lem:small-ball}Let $f^{\star}\in\mathcal{H}^{\beta}\left(\left[0,1\right]\right)$
then for any positive decreasing sequences $c_{n}$, $\delta_{n}$,
such that $\delta_{n}\geq c_{n}^{\beta}$ there exists constants $C_{0}$
and $D_{0}$ depending on $f^{\star}$ and a rank $n_{0}\in\mathbb{N}$
such that $\forall n\geq n_{0}$ we have 
\[
-\log\pi\left[\left\Vert f-f^{\star}\right\Vert <\delta_{n}\right]\leq D_{0}\left(\frac{1}{c_{n}}\right)+C_{0}\frac{1}{c_{n}}\left(\log\frac{1}{c_{n}\delta_{n}}\right)^{2}.
\]
\end{lem}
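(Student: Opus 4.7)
The strategy is the classical Kuelbs–Li / Borell concentration-function approach adapted to rescaled Gaussian processes, as developed by van der Vaart and van Zanten. Write $W^{c_n}_t = W_{t/c_n}$ and let $\mathbb{H}^{c_n}$ denote its reproducing kernel Hilbert space. The first step is to recall the concentration-function inequality: for every $\delta>0$,
\[
-\log \pi\bigl(\|f-f^\star\|<2\delta\bigr) \;\leq\; \inf_{h\in\mathbb{H}^{c_n}:\,\|h-f^\star\|<\delta} \tfrac{1}{2}\|h\|_{\mathbb{H}^{c_n}}^2 \;+\; \bigl(-\log \pi(\|W^{c_n}\|<\delta)\bigr).
\]
Applied at $\delta=\delta_n/2$, this decomposes the small-ball bound into a centered small-ball term and an RKHS-approximation term, and reduces the lemma to controlling each separately.

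For the centered small-ball term, I would invoke the estimate of van der Vaart and van Zanten for the squared-exponential process: the metric entropy of the unit ball of $\mathbb{H}^{c_n}$ in the sup-norm grows like $(1/c_n)\log^2(1/\epsilon)$, and via Kuelbs–Li duality this translates into
\[
-\log \pi\bigl(\|W^{c_n}\|<\delta_n\bigr) \;\leq\; C_0\,\frac{1}{c_n}\Bigl(\log\frac{1}{c_n\delta_n}\Bigr)^{2},
\]
for $n$ large enough that $c_n\delta_n$ is small. This already produces the second term of the claim; note that only an exponential-tail assumption on the spectral measure is used, which is why the remark after the statement mentions the extension beyond the Gaussian kernel.

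For the approximation term I would construct an explicit $h_n\in\mathbb{H}^{c_n}$ close to $f^\star$. The RKHS of the squared-exponential GP consists of restrictions of entire functions, so one can set $h_n = f^\star \ast \psi_{c_n}$ with $\psi_{c_n}$ a Gaussian convolution kernel of bandwidth comparable to $c_n$, possibly extended to $\mathbb{R}$ after Hölder-preserving extension of $f^\star$. Standard approximation bounds for $\beta$-Hölder functions give $\|h_n-f^\star\|_\infty \lesssim \|f^\star\|_{\mathcal{H}^\beta}\, c_n^\beta \leq \delta_n$ under the assumed condition $\delta_n\geq c_n^\beta$, while an explicit computation with the Fourier representation of $\psi_{c_n}$ yields $\|h_n\|_{\mathbb{H}^{c_n}}^2 \leq D_0(f^\star)\,/c_n$. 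Substituting into the concentration-function inequality and absorbing multiplicative constants into $C_0,D_0$ gives the stated bound for $n\geq n_0$, the rank $n_0$ being chosen so that $c_n\delta_n$ is small enough for the small-ball estimate to apply and $c_n$ is small enough for the convolution approximation to enter the Hölder regime.

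The main obstacle is the approximation step: one needs an $h_n$ that is simultaneously a Hölder-approximant of $f^\star$ at the prescribed rate and lies in the rather restrictive RKHS $\mathbb{H}^{c_n}$ of an entire-function type kernel with a controlled norm. The trade-off forced by the coupling $\delta_n\geq c_n^\beta$ is exactly what keeps both the approximation error and the RKHS norm on the same scale, and tracking the constants so that $D_0$ depends on $f^\star$ only through its Hölder norm is the delicate bookkeeping in the argument.
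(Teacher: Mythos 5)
Your proposal is correct and takes essentially the same route as the paper: the concentration-function decomposition for non-centered Gaussian small balls, the rescaled-process centered small-ball estimate giving the $\frac{1}{c_n}\left(\log\frac{1}{c_n\delta_n}\right)^{2}$ term, and the RKHS approximation of $f^{\star}$ under the coupling $\delta_{n}\geq c_{n}^{\beta}$ giving the $D_{0}/c_{n}$ term. The only difference is that you sketch the internals of the two ingredients (metric entropy plus Kuelbs--Li duality, and the explicit Gaussian-convolution approximant), whereas the paper simply cites Theorem 2.4 and Lemma 2.2 of van der Vaart and van Zanten (2007) as black boxes.
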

We use the above lemma to give a result on a non parametric estimator,
\begin{thm}
\label{thm:nonpara}Suppose that Assumption A\ref{assu:Lipschitz}
is verified for the prior model and the condition described above
on the prior and the model are verified, assume 
\[
C:=\sqrt{\sup_{i\leq m}\mathbb{P}\left\{ \left(X_{i}-\mathbb{P}X_{i}\right)^{2}\right\} \vee\sup_{i\leq m}\pi_{\theta^{\star}}\left\{ \left(X_{i}-\mathbb{\pi_{\theta^{\star}}}X_{i}\right)^{2}\right\} }<\infty
\]
 $c_{n}\asymp\left(\frac{\log^{2}n}{n}\right)^{\frac{1}{2\beta+1}}$
and $\lambda_{n}\asymp n^{\frac{\beta+1}{2\beta+1}}\left(\log n\right)^{\frac{\beta}{2\beta+1}}$
then there exists constant $K_{2},K_{1}>0$ such that for $n$ large
enough we get with probability at least $1-\epsilon,$

\begin{multline*}
\left\Vert \varrho_{\lambda}^{ABC}(X^{n})-\mathbb{P}(X^{n})\right\Vert _{2,n}\leq\inf_{\theta\in\Theta}\left\Vert \pi_{\theta}(X^{n})-\mathbb{P}(X^{n})\right\Vert _{2,n}+K_{2}n^{-\frac{\beta}{2\beta+1}}\left\{ \log n\right\} ^{\frac{\beta}{2\beta+1}}\\
+K_{1}n^{-\frac{\beta+1}{2\beta+1}}\left(\log n\right)^{\frac{\beta}{2\beta+1}}\log\frac{2}{\epsilon}.
\end{multline*}
\end{thm}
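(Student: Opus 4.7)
\textbf{Proof plan for Theorem \ref{thm:nonpara}.} The strategy is a direct application of Lemma \ref{lem:bound}, after checking the three hypotheses and specializing each term on the right-hand side to the present non-parametric setup, followed by an optimization in $(c_n,\delta_n,\lambda_n)$. Assumption A\ref{assu:Hoeffding} holds with $f(n,\lambda)=\lambda^{2}K/(2n)$ by Lemma \ref{lem:McDiarmid-nonparam}, Assumption A\ref{assu:distance} holds because $d$ is a seminorm (hence jointly convex), and Assumption A\ref{assu:Lipschitz} is assumed. Lemma \ref{lem:bound} then yields, with probability at least $1-\epsilon$,
\begin{multline*}
d(\varrho_{\lambda,X}S,\mathbb{P}S)\le \inf_{\theta\in\Theta}d(\pi_{\theta}S,\mathbb{P}S)+\sup_{\|\theta-\theta^{\star}\|<\delta}\pi_{\theta}\{d(S,\pi_{\theta}S)\}\\
+L\delta+\mathbb{P}\{d(S,\mathbb{P}S)\}+\frac{\lambda K}{n}-\frac{2}{\lambda}\log\pi(\|\theta-\theta^{\star}\|<\delta)+\frac{2}{\lambda}\log\frac{2}{\epsilon}.
\end{multline*}

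Next I would estimate each of the non-bias terms. For $\mathbb{P}\{d(S,\mathbb{P}S)\}$, since $S(X^{n})=X^{n}$ and $d^{2}(X^{n},Y^{n})=\frac{1}{n^{2}}\sum_{i}(X_{i}-Y_{i})^{2}$, Jensen's inequality gives $\mathbb{P}\{d(S,\mathbb{P}S)\}\le\sqrt{\mathbb{P}d^{2}(S,\mathbb{P}S)}=\sqrt{\frac{1}{n^{2}}\sum_{i}\mathrm{Var}_{\mathbb{P}}(Y_{i})}\le C/\sqrt{n}$, and the same bound controls $\pi_{\theta^{\star}}\{d(S,\pi_{\theta^{\star}}S)\}$; the $\sup$ over the Lipschitz ball can be absorbed into the $L\delta$ contribution up to a change of constants. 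For the prior mass, I would invoke Lemma \ref{lem:small-ball} with the rescaled Gaussian process and the sup-norm, choosing $\delta_{n}\asymp c_{n}^{\beta}$ to meet the condition $\delta_{n}\ge c_{n}^{\beta}$, so that
\[
-\frac{2}{\lambda_{n}}\log\pi(\|\theta-\theta^{\star}\|<\delta_{n})\le \frac{2D_{0}}{\lambda_{n}c_{n}}+\frac{2C_{0}}{\lambda_{n}c_{n}}\Bigl(\log\frac{1}{c_{n}\delta_{n}}\Bigr)^{2}.
\]

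Finally, I would plug in the prescribed rates $c_{n}\asymp(\log^{2}n/n)^{1/(2\beta+1)}$ and $\lambda_{n}\asymp n^{(\beta+1)/(2\beta+1)}(\log n)^{\beta/(2\beta+1)}$ and collect the contributions. The variance term $C/\sqrt{n}$ is negligible compared to the target rate. The Lipschitz bias $L\delta_{n}\asymp c_{n}^{\beta}$, the Hoeffding remainder $\lambda_{n}K/n$, and the small-ball remainder $(\lambda_{n}c_{n})^{-1}(\log(c_{n}\delta_{n})^{-1})^{2}$ should each be of order $n^{-\beta/(2\beta+1)}(\log n)^{\beta/(2\beta+1)}$ up to log factors, which is precisely how the exponents $1/(2\beta+1)$ in $c_{n}$ and $(\beta+1)/(2\beta+1)$ in $\lambda_{n}$ are dictated. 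The deviation term $\frac{2}{\lambda_{n}}\log(2/\epsilon)$ produces the $K_{1}$ contribution.

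The main obstacle is balancing the three rate-determining terms $L\delta_{n}$, $\lambda_{n}K/n$ and $(\lambda_{n}c_{n})^{-1}(\log c_{n}^{-1})^{2}$ simultaneously and keeping track of the polylogarithmic factors that arise from the Gaussian process small-ball formula; this balancing is what forces the particular choices of $c_{n}$ and $\lambda_{n}$ announced in the statement and produces the powers of $\log n$ appearing in the final rate. Once the three bounds have a common leading order $n^{-\beta/(2\beta+1)}(\log n)^{\beta/(2\beta+1)}$, gathering constants and absorbing lower-order terms into $K_{2}$ finishes the argument for $n$ large enough to apply Lemma \ref{lem:small-ball}.
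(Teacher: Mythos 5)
Your plan follows the paper's proof essentially verbatim: apply Lemma \ref{lem:bound} with $f(n,\lambda)=\lambda^{2}K/(2n)$ from Lemma \ref{lem:McDiarmid-nonparam}, bound $\mathbb{P}\{d(S,\mathbb{P}S)\}$ and $\pi_{\theta}\{d(S,\pi_{\theta}S)\}$ by $C/\sqrt{n}$ via Jensen, control the prior mass with Lemma \ref{lem:small-ball} under $\delta_{n}\asymp c_{n}^{\beta}$, and plug in the stated rates to balance the terms. This is correct and matches the paper's argument in both decomposition and the choice of $(\delta_n,c_n,\lambda_n)$.
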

\begin{rem}
Although the bounds allow for convergence in terms of moments of statistics,
the choice of a statistic that would allow correct inference on the
function $f$ remains open. Note that for the max-norm the dependence
in the dimension of the statistic does not appear in the bound. This
leaves hope that one may be able to choose a rich enough class of
statistics. A full study of this problem is beyond the aim of this
paper. We simply use this idea as an illustration of how the bounds
can be used.
\end{rem}

\section{Monte Carlo algorithm\label{sec:Monte-Carlo-algorithm}}

The previous sections were devoted to the introduction of new theoretical
results. We have used a previously known definition of the pseudo-posterior
(Equation \ref{eq:Kernel-ABC}) applied to the specific case of the
the exponential kernel. By studying this case we get precise theoretical
results, robust to misspecification. In this section we build upon
known samplers to propose an efficient implementation of the statistical
procedure. 

The ABC pseudo-posterior can be expressed as a joint distribution
with latent variables $X^{n}\in\mathcal{X}^{n}$. We can use most
of the Monte Carlo toolbox available in this context. In this paper
we will build upon the adaptive algorithm of \cite{DelMoral2012}.
We want to sample from the pseudo-posterior of Definition \ref{def:abc-theta},
to do this the authors propose an adaptive sequential Monte Carlo
algorithm (SMC) on the joint distribution of Equation (\ref{eq:Kernel-ABC})
with uniform kernel, 
\[
\pi_{\epsilon}(d\theta,dX^{n})\propto\textbf{1}_{d\left(S(X^{n}),S(Y^{n})\right)\leq\epsilon}\pi_{\theta}(dX_{1}^{n})\pi(d\theta).
\]
 The uniform kernel is akin an accept-reject step, the weights take
value $0$ or $1$. 

The algorithm proposed in this paper builds on a few known results,
we shortly describe some of them in the next section. For further
insights the reader is referred to (\cite{DelMoral2012,DelMoral2006}). 

\subsection{Sequential Monte Carlo}

In this subsection we describe SMC as a general algorithm to sample
from a sequence of probability distribution $\left(\pi_{n}\right)_{n\in\mathbb{T}}$
for some index set $\mathbb{T}$ on the measurable space $\left(E,\mathcal{E}\right)$
as presented in \cite{DelMoral2006}.

Each distribution at index $n$ is approximated by a collection of
$M$ random variables $(Z_{i,n})_{i=1}^{M}$ termed particles. We
move the array of particles using the kernel $K_{n}(z_{n-1},dz_{n})$,
and use importance sampling to correct the distribution. Unfortunately
if the distribution of the particles are distributed according to
$\eta_{n-1}$ at index $n-1$ applying the kernel yields a distribution
$\eta_{n}(dx^{\prime})=\int_{E}\eta_{n-1}(dx)K_{n}(x,dx^{\prime})$,
which is not tractable in general. \cite{DelMoral2006} suggest to
write the importance sampling step on the extended space $\left(E^{n+1},\mathcal{E}^{\otimes n+1}\right)$
by introducing a backward kernel $L_{n}$. The importance sampling
algorithm is done between the joint distribution $\eta_{n}(x_{1}^{n})$
and $\tilde{\pi}_{n}(x_{1}^{n})=\pi_{n}(x_{n})\prod_{k=1}^{n-1}L_{k}(x_{k+1},dx_{k})$,
the vanilla algorithm is summed up in algorithm \ref{alg:SMC-vanilla}.

\begin{algorithm}
\caption{Vanilla SMC}
\label{alg:SMC-vanilla}

 \begin{description} 
\item[Input] $M$ (number of particles),  $\tau\in(0,1)$ (ESS threshold),
\item[Init.] Sample $Z_0^i\sim\pi_0$ for $i=1$ to $N$, put $W_{i,0}=\frac1M$
\item[Loop]
\begin{description} 
\item[a.] If $ESS(Z_{n-1}^i)=\frac{\{\sum_{i=1}^M w_n(Z_{n-1}^i)\}^2} {\sum_{i=1}^M \{w_n(Z_{n-1}^i))^2\} } \leq \tau M $, then Resample the particles i.e. draw $A_n^i$ in $1,\ldots,M$ so that
$$\mathbb{P}(A_n^i=j) = w_n(Z_{n-1}^j)/\sum_{k=1}^M w_n(Z_{n-1}^k);$$
see Algorithm \ref{alg:SystResampling} in the appendix.
\item[b.] Sample $Z_n^i \sim K_n(Z_{t-1}^{A_n^i},dz)$ for $i=1$ to $N$ and compute 
$$
W_{n}^i\propto W^i_{n-1}\frac{\pi_{n}(Z_n)}{\pi_{n-1}(Z_{n-1})}\frac{\rm{d}L_{n-1}(Z_n^i,Z_{n-1}^i)}{\rm{d}K_{n-1}(Z_{n-1}^i,Z_{n}^i)}
$$
\end{description} 
\end{description} 
\end{algorithm}

As in \cite{DelMoral2012}, we choose the kernel $K_{n}$ to be invariant
with respect to the distribution $\pi_{n}$, and $L_{n}$ as the reversal
backward kernel. In this case the weight update is given by 
\[
W_{n,i}\propto W_{n-1,i}\frac{\pi_{n}(Z_{n-1})}{\pi_{n-1}(Z_{n-1})}.
\]
To adapt the algorithm to the ABC framework we will take the sequence
of distribution as a sequence of pseudo-posterior distributions with
decreasing temperatures. Take the sequence of target distribution
$\left\{ \pi_{\epsilon_{n}}(d\theta,dX^{n})\right\} $ for a decreasing
sequence of $\epsilon_{n}$. In fact we can use in all generality, 
for any $M\in\mathbb{N_{\star}}$
\[
\pi_{\epsilon_{n}}^{M}(d\theta,dX_{1:M}^{n})\propto\left(\frac{1}{M}\sum_{i=1}^{M}\textbf{1}_{d\left(S(X^{n,i}),S(Y^{n})\right)\leq\epsilon_{n}}\right)\prod_{i=1}^{M}\pi_{\theta}(dX^{n,i})\pi(d\theta).
\]
 This sequence of distributions has the same marginals as the previous
ones. This allows us to rewrite the weight update as,
\[
W_{n,i}\propto W_{n-1,i}\frac{\frac{1}{M}\sum_{i=1}^{M}\textbf{1}_{d\left(S(X^{n,i}),S(Y^{n})\right)\leq\epsilon_{n}}}{\frac{1}{M}\sum_{i=1}^{M}\textbf{1}_{d\left(S(X^{n,i}),S(Y^{n})\right)\leq\epsilon_{n-1}}}.
\]
 In sub-Section \ref{sec:Monte-Carlo-algorithm} we will adapt the
algorithm to the case of exponential kernels and propose an adaptive
algorithm for the choice of parameter $M$ that allows to address
the pitfalls of the algorithm of \cite{DelMoral2012}.

\subsection{SMC-ABC for exponential kernels}

We have described above the main building blocks of SMC-ABC. Here
we give more insight on the algorithm and propose some methodological
improvements. Our goal is to sample from 
\begin{equation}
\varrho_{\lambda}(d\theta)=\frac{1}{Z_{\lambda,\pi}}\int_{\mathcal{X}}e^{-\lambda d(S(X^{n}),S(Y^{n}))}\pi_{\theta}(dX^{n})\pi(d\theta).\label{eq:posterior2}
\end{equation}
 As discussed in the previous section we are in fact going to sample
from the joint distribution 
\[
\pi_{\lambda}^{M}(d\theta,dX_{1:M}^{n})\propto\left(\frac{1}{M}\sum_{i=1}^{M}e^{-\lambda d\left(S(X^{n,i}),S(Y^{n})\right)}\right)\prod_{i=1}^{M}\pi_{\theta}(dX^{n,i})\pi(d\theta),
\]
hence using multiple draw from the prior model $\pi_{\theta}$. The
marginal in $\theta$ of the distribution thus defined is still given
by equation \ref{eq:posterior2}. It is straightforward to adapt the
SMC methodology to this particular example. We need to define an increasing
sequence of inverse temperature $0=\lambda_{0}<\lambda_{1}<\cdots<\lambda_{T}=\lambda$,
thus defining a sequence of distribution to sample from $\pi_{t}\equiv\pi_{\lambda_{t}}^{M}$.
Hence the weight update in algorithm \ref{alg:ABC-SMC} is given by
\[
W_{t,i}\propto W_{t-1,i}\frac{\sum_{i=1}^{M}e^{-\lambda_{t}d\left(S(X^{n,i}),S(Y^{n})\right)}}{\sum_{i=1}^{M}e^{-\lambda_{t-1}d\left(S(X^{n,i}),S(Y^{n})\right)}}.
\]
The algorithm depends on several inputs: first the index of the sequence
of distribution $\left(\lambda_{t}\right)_{t>0}.$ In \cite{DelMoral2012}
the authors choose the sequence of windows of the uniform kernel adaptively.
Similarly we propose to adapt it to a variance criterion on the weights.
At each step $t$ we therefore choose the next $\lambda_{t+1}$ according
to the past weighted particles. Second, we must choose the MCMC kernel.
Finally we also need to choose adaptively the number of samples $M$
drawn from the model. We give a description of what was mentioned
in a general pseudo code (Algorithm \ref{alg:ABC-SMC}).

\begin{algorithm}[H]
\caption{SMC-ABC}
\label{alg:ABC-SMC}

 \begin{description} 
\item[Input] $N$ (number of particles),  $\tau\in(0,1)$ (ESS threshold), optimal acceptance ratio $\alpha$.
\item[Init.] Sample $Z_0^i\sim\pi_0$ for $i=1$ to $N$, put $W_{i,0}=\frac1N$, set $M=1$,  
\item[Loop]
\begin{description} 
\item[a.] Choose $\lambda_t$ such that $ESS(\theta_t^i,X_t^i)=\tau N $ (Section \ref{sec:inv-temp}), 
\item[b.] Resample the particles i.e. draw $A_n^i$ in $1,\ldots,N$ so that
$$\mathbb{P}(A_n^i=j) = w_n(\theta_t^i,X_t^i)/\sum_{k=1}^M w_n(\theta_t^i,X_t^i);$$
see Algorithm \ref{alg:SystResampling} in the appendix.
\item[c.] Sample 
$(\theta_t^i,X_t^i)_n^i \sim K_n\left((\theta_{t-1}^{A_n^i},X_{t-1}^{A_n^i}),dz\right)$ where $K_n$ is a MCMC Kernel (see Section \ref{sec:MCMC}). 
\item[d.]for $i=1$ to $N$ and compute 
$$
W_{t,i}\propto W_{t-1,i}\frac{\sum_{i=1}^{M}e^{-\lambda_{t}d\left(S(X^{n,i}),S(Y^{n})\right)}}{\sum_{i=1}^{M}e^{-\lambda_{t-1}d\left(S(X^{n,i}),S(Y^{n})\right)}}
$$
\item[e.] Choose new value of $M$ according to some criterion (Section \ref{sec:Choice-M})
\end{description} 
\end{description} 
\end{algorithm}

The complexity of the algorithm will be dominated by the adaptation
in the $\theta$ dimension as we will need to estimate and invert
a covariance matrix. The complexity is linear in the number of particles.
Hence overall we get a complexity of $\mathcal{O}\left(d^{3}+MNnm\right)$
as long as the statistics are linear in the number of samples and
that the distance is linear in the number of statistics.

The memory cost will be dominated by the number of particles at any
given time, we need to keep all $\left\{ \theta_{i},S(X^{n})_{i}^{1:M}\right\} _{i=1}^{N}$
hence with the notation of the previous sections $\mathcal{O}\left(dN+mMN\right)$.
Keeping track of the current system of statistics, will usually be
by far the most expensive. In most cases of practical interest the
dimension of the statistics will be much smaller than $n$ and $M$
will typically be of the order of tens. 

In the following we propose several ways to write a more efficient
algorithm based on the current approximation of the likelihood. 

\subsection{Choice of the MCMC kernel}

\label{sec:MCMC}

We need to build a MCMC kernel with invariant distribution, 
\[
\rho(\theta,X^{n,M})\propto\left(\frac{1}{M}\sum_{i=1}^{M}e^{-\lambda d\left(S(X^{n,i}),S(Y^{n})\right)}\right)\prod_{i=1}^{M}\pi_{\theta}(dX^{n,i})\pi(d\theta).
\]
 We use a pseudo-marginal algorithm \cite{Andrieu2010}, i.e. a Random
walk Metropolis-Hastings (RW-MH) algorithm in the augmented state
space with a specific proposal. We let $\mathcal{Q}\left(\left(\theta,X^{n,M}\right)_{t-1},d\vartheta\times dY^{n,M}\right)$
be a kernel on $\left(\Theta\times\mathcal{X}^{n\times M}\right)$
such that 
\[
\mathcal{Q}\left(\left(\theta,X^{n,M}\right)_{t-1},d\vartheta\times dY^{n,M}\right)=\prod_{i=1}^{M}\pi_{\vartheta}(dY^{n,i})q(\theta,d\vartheta),
\]
 where $q(\theta,d\vartheta)$ is the kernel of a Gaussian random
walk centered in $\theta$. It well known that this proposal allows
one to build a MH algorithm on the joint space with acceptation ratio

\begin{equation}
\alpha\left(\left(\theta,X^{n,M}\right)_{s-1},\left(\theta,X^{n,M}\right)_{s}\right)=1\wedge\frac{\left(\sum_{i=1}^{M}e^{-\lambda d\left(S(X^{n,i})_{s},S(Y^{n})\right)}\right)\pi(\theta_{s})}{\left(\sum_{i=1}^{M}e^{-\lambda d\left(S(X^{n,i})_{s-1},S(Y^{n})\right)}\right)\pi(\theta_{s-1})}.\label{eq:MHratio}
\end{equation}
The algorithm itself can also be thought as a specific instance of
an ABC-MCMC kernel. One of the key advantage of using the kernel in
a SMC algorithm is that we can calibrate it using the particles at
a given time. The proposal, in $\theta$ space, is taken to be a Gaussian
distribution centered at the previous value with the estimated covariance
matrix multiplied by a scaling factor. In the case where the dimension
$d$ of the parameter space is large we perform a penalized estimation
of the covariance estimation to ensure it is positive definite. 

Furthermore we propose applying the kernel a fixed number of steps
$K$. Note that it is also possible to calibrate the number of steps
on the average distance the swarm of particles moves as in \cite{Ridgway2014}.
Here to dissociate the effects of the different changes in the methodology
we fix it to some amount ($K=3$ in all experiments). The algorithm
for the MCMC move is summarized in the following pseudo-code (Algorithm
\ref{alg:ABC-MCMC}).

\begin{algorithm}[H]
\caption{MCMC kernel}
\label{alg:ABC-MCMC}

 \begin{description} 
\item[Given] $\theta_{t-1}$ and $S(X^n)_{t-1}$ 
\begin{description}    
\item[a.] Sample 
$(\theta^{\prime},X^{\prime n})\sim \pi(d X^n \vert \theta)q(d\theta\vert\theta_{t-1})$     \item[b.] Compute the log acceptance ratio         
\[         
\ell_t=\log\frac{\sum_{i=1}^{M}e^{-\lambda d\left(S(X^{n,i})_{s},S(Y^{n})\right)}}{\sum_{i=1}^{M}e^{-\lambda d\left(S(X^{n,i})_{s-1},S(Y^{n})\right)}} +\log\frac{\pi(\theta^\prime)}{\pi(\theta_{t-1})} 
\]
\item[c.] Sample $U\sim \mathcal{U}_{[0,1]}$         
\begin{description}             
\item[If] $\ell_t<\log U$             
\item[Then] Set $(\theta_t,S(X^n))\leftarrow (\theta^\prime,S(X^{n,\prime}))$             \item[Else] Set $(\theta_t,S(X^n))\leftarrow (\theta_{t-1},S(X^{n})_{t-1})$         \end{description} 
\end{description} 
\end{description} 
\end{algorithm}

\subsection{Choice of the number of samples $M$}

\label{sec:Choice-M}

Increasing $M$ reduces the variance of the weights and the acceptance
rate. The weights are already controlled by the speed at which the
sequence of inverse temperature is increased. It is therefore reasonable
to increase the value of $M$ whenever the acceptance ratio goes below
some threshold. This is akin the idea that was used in \cite{Chopin2013a}
in the context where the samples are generated according to a particle
filter. The author double the number of samples whenever the acceptance
goes below $20\%$. In their paper the change in the number of samples
is done via importance sampling on an extended space. We detail this
strategy in sub-Section \ref{subsec:Importance-sampling} and propose
an approach adapted from \cite{Chopin2015} in sub-Section \ref{subsec:Gibbs-sampling}.

\subsubsection{Importance sampling\label{subsec:Importance-sampling}}

The first idea to change the number of samples $M$ is based on an
importance sampling algorithm on the extended space including the
new sample $\tilde{X}^{n,1:\tilde{M}}$ of size $\tilde{M}$. The importance sampling step is performed
on $\left(\theta,X^{n,1:M},\tilde{X}^{n,1:\tilde{M}}\right)$. Hence
we want to propose a set of $\tilde{M}$ new particles sampled from
$\prod_{i=1}^{\tilde{M}}\pi\left(\left.d\tilde{X}^{n,i}\right|\theta\right)$.
We start, on the extended space, with an approximate sample from the
distribution 

\[
q\left(d\theta,dX^{n,1:M},dX^{n,1:\tilde{M}}\right)=\rho_{\lambda}\left(d\theta,dX^{n,1:M}\right)\prod_{i=1}^{\tilde{M}}\pi\left(\left.d\tilde{X}^{n,i}\right|\theta\right).
\]
To perform importance sampling we need to introduce an auxiliary backward
kernel $L\left(\tilde{X}^{n,1:\tilde{M}},dX^{n,1:M}\right).$ Using
the proposal distribution $q$ we thus define the importance sampling
algorithm with weights 
\[
w\left(X^{n,1:M},X^{n,1:\tilde{M}}\right)=\frac{\rho_{\lambda}\left(d\theta,d\tilde{X}^{n,1:\tilde{M}}\right)L\left(\tilde{X}^{n,1:\tilde{M}},dX^{n,1:M}\right)}{\rho_{\lambda}\left(d\theta,dX^{n,1:M}\right)\prod_{i=1}^{\tilde{M}}\pi\left(\left.\tilde{X}^{n,i}\right|\theta\right)}.
\]
 The particular choice of the independent backward kernel $L\left(\tilde{X}^{n,1:\tilde{M}},dX^{n,1:M}\right)=\prod_{i=1}^{\tilde{M}}\pi\left(\left.\tilde{X}^{n,i}\right|\theta\right)$
yields tractable weights,
\[
w\left(X^{n,1:M},\tilde{X}^{n,1:\tilde{M}}\right)=\frac{M\sum_{i=1}^{\tilde{M}}e^{-\lambda d\left(S(\tilde{X}^{n,i}),S(Y^{n})\right)}}{\tilde{M}\sum_{i=1}^{M}e^{-\lambda d\left(S(X^{n,i})_{s},S(Y^{n})\right)}}.
\]
The importance sampling step takes the intuitive form of a ratio of
each contributions to the likelihood. It has been observed however
that this approach adds variance to weights. In fact our numerical
experiment show that this algorithm is not usable in practice on our
problem. We observe an important increase of the variance of the weights
that leads to a degeneracy after only a few step of the SMC sampler. 

\subsubsection{Gibbs sampling\label{subsec:Gibbs-sampling}}

Another approach based on similar idea is to use a Gibbs sampler,
hence not modifying the weights. We augment the space by some index
$k$ in the following way,

\[
\rho_{\lambda}\left(k,d\theta,dX^{n,1:M}\right)=\frac{1}{M}e^{-\lambda d\left(S(X^{n,k}),S(Y^{n})\right)}\prod_{i=1}^{M}\pi\left(dX_{i}^{n}\vert\theta\right).
\]
 It is clear that this distribution has the correct marginal (by summing
on the index $k$). To sample from this distribution we start by sampling
from the distribution of the index conditionally on $\theta$ and
$X^{n,1:M}$, then sample the $X^{n}$ conditionally on the rest.
A sample from the index's distribution is easily seen to be a draw
from and empirical distribution,

\[
\hat{k}\sim\sum_{i=1}^{M}w_{k}\delta_{k},\qquad\text{with}\qquad w_{k}=\frac{e^{-\lambda d\left(S(X^{n,k}),S(Y^{n})\right)}}{\sum_{j=1}^{M}e^{-\lambda d\left(S(X^{n,j}),S(Y^{n})\right)}}.
\]
 Hence $X^{n,\hat{k}}$ is kept fixed and it remains to sample from
the other conditional, 
\[
X^{n,2:\tilde{M}_x}\sim\prod_{i=2}^{\tilde{M}_x}\pi\left(\rm{d}\left.X^{n,i}\right|\theta\right),
\]
 where without loss of generality we put $\hat{k}=1$.

\begin{algorithm}[H]
\caption{Change of $M$ using Gibbs}
 \label{alg:ChangeM-Gibbs }

 \begin{description}     
\item[Input]$M_x$, $\tilde{M}_x$     
\item[For ]$i\in \lbrace 1, \cdots, N\rbrace$     
\begin{description}     
\item[a.] Sample an index  $k\vert X_{1:M_x},\theta \sim \sum_{j=1^M}\delta_{j}(k) W_j$     \item[b.] Sample $\tilde{M}_x-1$ new samples $X^n_{2:\tilde{M}_x}\sim \prod_{i=2}^{\tilde{M}_x}\pi(d X^n_i\vert \theta)$    
\end{description}
\item[End For]
\end{description}  
\end{algorithm}

The Gibbs sampling approach has the key advantage of not modifying
the weights and therefore introduces less variance on complicated
problems. As discussed above we observed, in practice, that for the
purpose of ABC it was not feasible to use importance sampling to choose
$M$. However as we will see in the numerical experiments Gibbs sampling
allows a control on the acceptance ratio.

\subsection{Adaptation to the sequence of inverse temperature}

\label{sec:inv-temp}

\cite{DelMoral2012} propose to chooses automatically the new window
in order to match the ESS of the weights to a fix value.

For the first few steps of the algorithm we set $M=1$. Choosing at
$t$ the parameter $\lambda_{t+1}$ to consider next is a matter of
solving a one dimensional equation $ESS(\lambda)=\tau N$ for a given
fixed $\tau$. In addition we know that the ESS is a decreasing function
of $\lambda$. Following \cite{Jasra2011} we propose to use a bi-section
search to solve the equation (see \cite{press2007numerical}). In
the case $M=1$ to change the weights at each new proposed $\lambda$
implies only multiplying the log weights by $\lambda$. The run-time
of the subroutine is negligible with respect to rest of the algorithm
as we can store the distances of each particle to the observed value.
In the case where $M>1$ the computation might be a bit more costly.
However we can keep track of the successive values of $\lambda$ and
use them to predict the next. We propose using a regression in log-scale
to predict the next outcome. We did not observe an important change
in the successive value of the ESS when using this coarse approximation.
In practice whenever we have reached the decision to increase $M$,
we swap to the strategy using the regression. We will typically fix
$\tau$ to be quite high ($\approx0.9$) to ensure a slow exploration
of the sequence of the posterior on what can be considered as difficult
problems.

\section{Numerical experiments\label{sec:Numerical-experiments}}

\subsection{Experiment 1}

We propose to use a variant of the experiment set in \cite{DelMoral2012}.
We use a mixture of two one dimensional Gaussian distributions. We
define the model to be,
\[
\pi(.\vert\theta)\equiv p\mathcal{N}(.;\mu_{1},\sigma_{1}^{2})+(1-p)\mathcal{N}(.;\mu_{2},\sigma_{2}^{2}),
\]
 to set aside any identification issues we assume the probability
known ($p=0.8$). The algorithm learns the rest of the parameters.
Our first experiment studies the case where the model is correctly
satisfied. The statistics are given by $S(x)=\left(x,x^{2},x^{3},x^{4},\textbf{1}_{x<-1},\textbf{1}_{x>2}\right).$
The distance is the one deduced by the Euclidean norm. The setup is
the following we take $n=90$, $\lambda=60$. We run the algorithm
with $N=3000$ particles. As a first measure, we compare the effect
of using a sampler with uniform kernel versus an exponential kernel.
We compare their precision for estimating the pseudo-posterior mean. 

\begin{figure}[H]
\caption{Boxplot for the error in parameter estimation }
\label{fig:boxplot}
\hfill{}%
\begin{minipage}[t]{0.4\columnwidth}%
\subfloat[parameter $\mu_{1}$]{\includegraphics[scale=0.25]{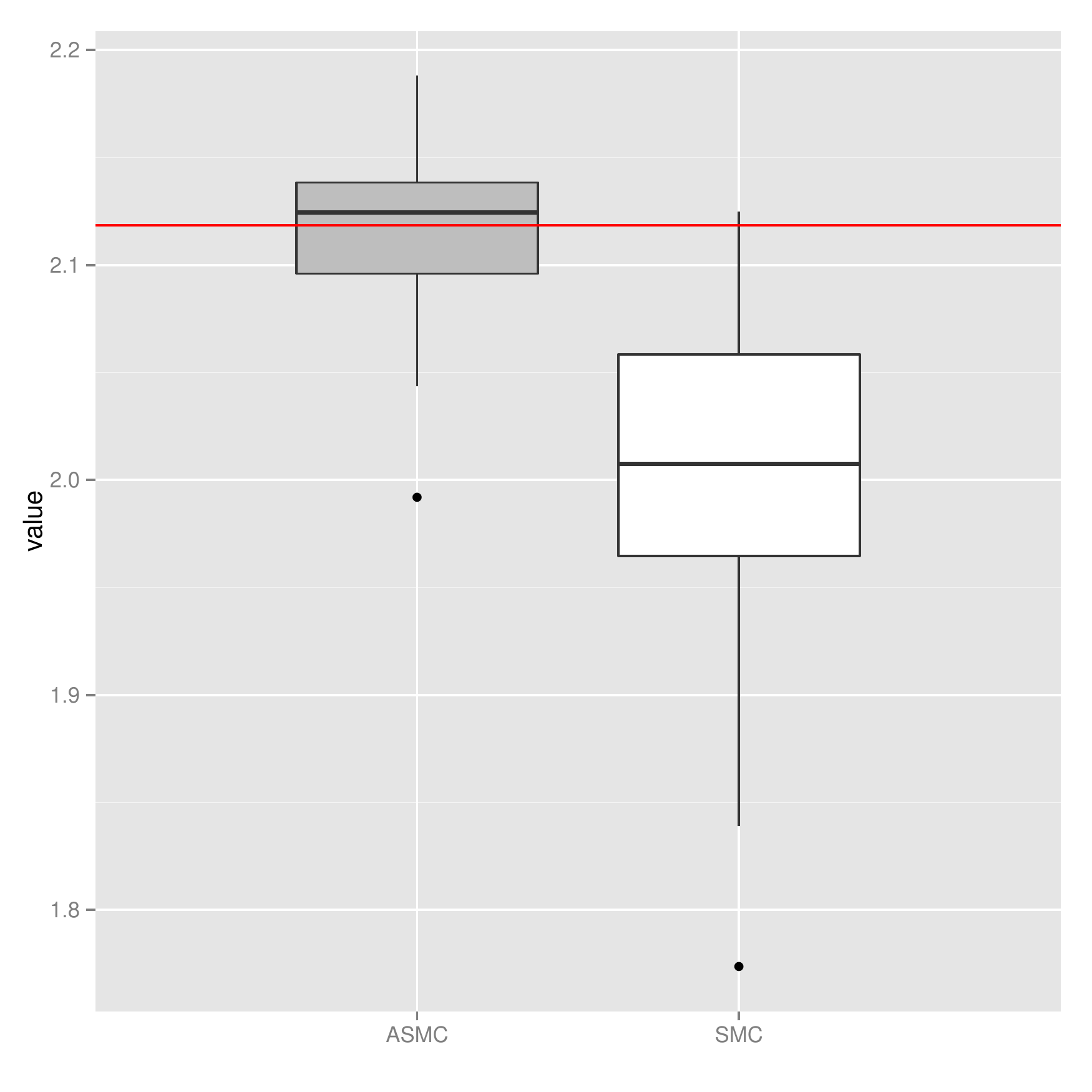}

}%
\end{minipage}\hfill{}%
\begin{minipage}[t]{0.4\columnwidth}%
\subfloat[parameter $\sigma_{1}$]{\includegraphics[scale=0.25]{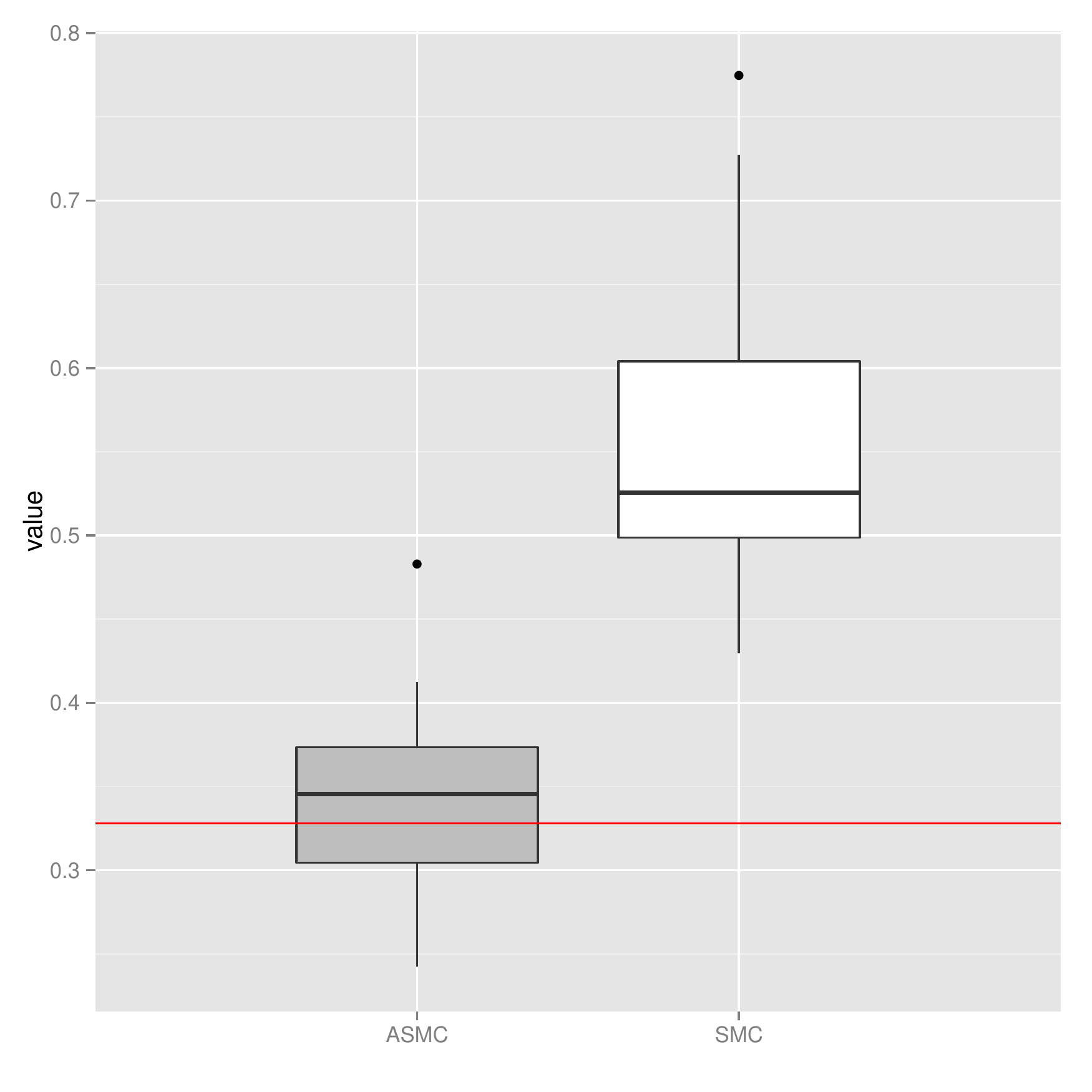}

}%
\end{minipage}\hfill{}

\hfill{}%
\begin{minipage}[t]{0.8\columnwidth}%
{\footnotesize{}Posterior mean of the parameters from both vanilla
SMC (adaptive only in the exploration of the sequence). The white
boxplot corresponds to the uniform kernel and the gray boxplot is
given by the exponential kernel. We obtain the red line by sampling
with ten times more particles. The two algorithm are run at constant
computational time}.%
\end{minipage}\hfill{}

\rule[0.5ex]{1\columnwidth}{1pt}
\end{figure}

There is already a substantial computational gain in using the exponential
kernel rather than the uniform. In the case of ABC this amounts to
weighing the particles rather than just throwing away the particles
that do not make a threshold. Intuitively at least, replacing the
hard threshold by a soft one, should reduce the variance. Furthermore
it seems from the experiments that exponentially weighted (EW) version
could explore more quickly the sequence of posteriors. This will serve
as a general justification for looking at the EW version from now
on. 

On the same experience we show the effect of increasing adaptively
the number of samples $M$. We compare two SMC samplers with exponential weights,
one with fixed $M=1$ and one that increases gradually $M$ according
to the rule that was defined in Section \ref{sec:inv-temp}. 

\begin{figure}[H]
\caption{Acceptance ratio (multiple samples)}

\label{fig:accept}\includegraphics[scale=0.3]{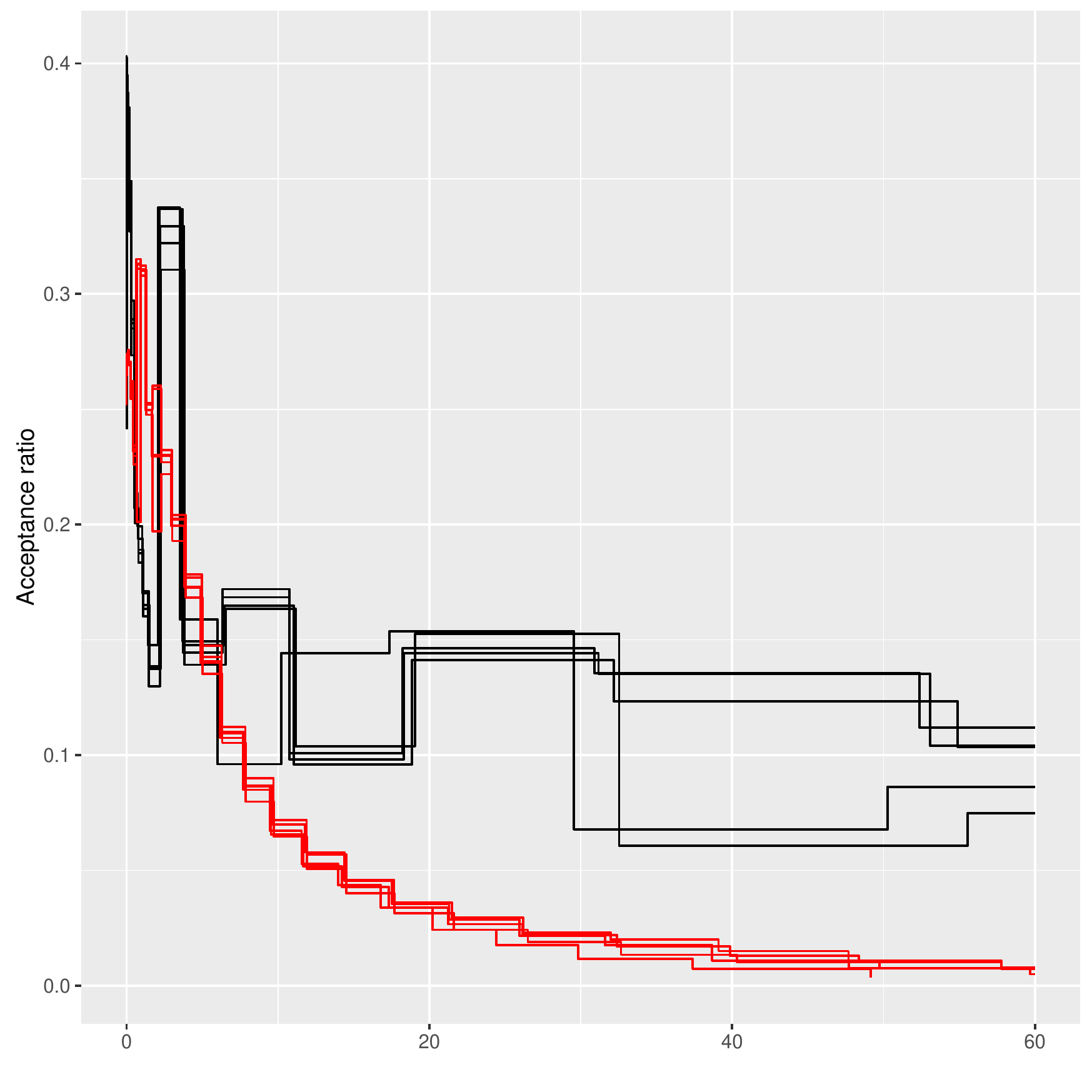}

\hfill{}%
\begin{minipage}[t]{0.8\columnwidth}%
{\footnotesize{}On the correctly specified data-set we show the evolution
of the acceptance ration with the increase of the inverse temperature.
The solid black line is given by the SMC-ABC with exponential kernel
under with an adaptive choice of of parameter $M$ indexing the number
of samples per particles. The red solid line is the corresponding
value for the vanilla EW SMC-ABC. Similar behavior was observed for
the case of the uniform kernel in \cite{DelMoral2012}. The targeted
acceptance ratio is 10\%.}%
\end{minipage}\hfill{}

\rule[0.5ex]{1\columnwidth}{1pt}
\end{figure}

It is already known that for fixed $M$ the value of the acceptance
ratio will decrease. In Figure \ref{fig:accept} we see that the effect
of using Gibbs sampling is to maintain the acceptance ratio at a given
level. We also tested the importance sampling algorithm, the effect
of changing the weights on this examples led to very high variance
to the point that the weight degenerated to $0$ ($ESS=1$). Several
additional point should be considered when using this algorithm. In
particular the increase of $M$ has also an effect on the memory of
the system as described at the beginning of Section \ref{sec:Monte-Carlo-algorithm}.
Different approach can be considered to treat this problem, in a technical
note \cite{Chopin2015} propose to store seeds used for the pseudo-random
number generator and to sample each trajectory each time it is needed.
We do not delve further on those problems as they go beyond the aim
of this paper.
In the rest of the experiements we will use exponential weights and the 
adaptive selection of the number of particles in $M$ presented in sub-Section \label{subsec:Gibbs-sampling}.

\subsection{Experiment 2}

We produce the same experiment as in the previous section, only this
time the model is mis-specified. We consider for a true model a mixture
of Gaussian with $3$ components. In the ABC pseudo-posterior we use
for $\left\{ \pi_{\theta},\theta\in\Theta\right\} $ the model described
in the previous experiment. We show the median and the maximum mean
square error of the statistics that are used in the pseudo posterior.
The MSE does not totally cancel even for the ABC that we develop as
there still exists a small bias. However we get a relatively small
value. To obtain the necessary bounds (replicating the example of
Section \ref{sec:Examples-of-bounds}) we truncate the observation
to the set $\left[-5,5\right]$ (this is equivalent to a change in
the statistics). We can therefore also compare the performance of
the algorithm with adaptive inverse temperature $\lambda$ (i.e. $\lambda$
is selected using by using the adaptive algorithm of Section \ref{subsec:Adaptive-bounds}).

\begin{figure}[h!]
\caption{Median and maximum MSE}
\label{fig:MSE}

\hfill{}%
\begin{minipage}[t]{0.4\columnwidth}%
\subfloat[Median MSE]{\includegraphics[scale=0.25]{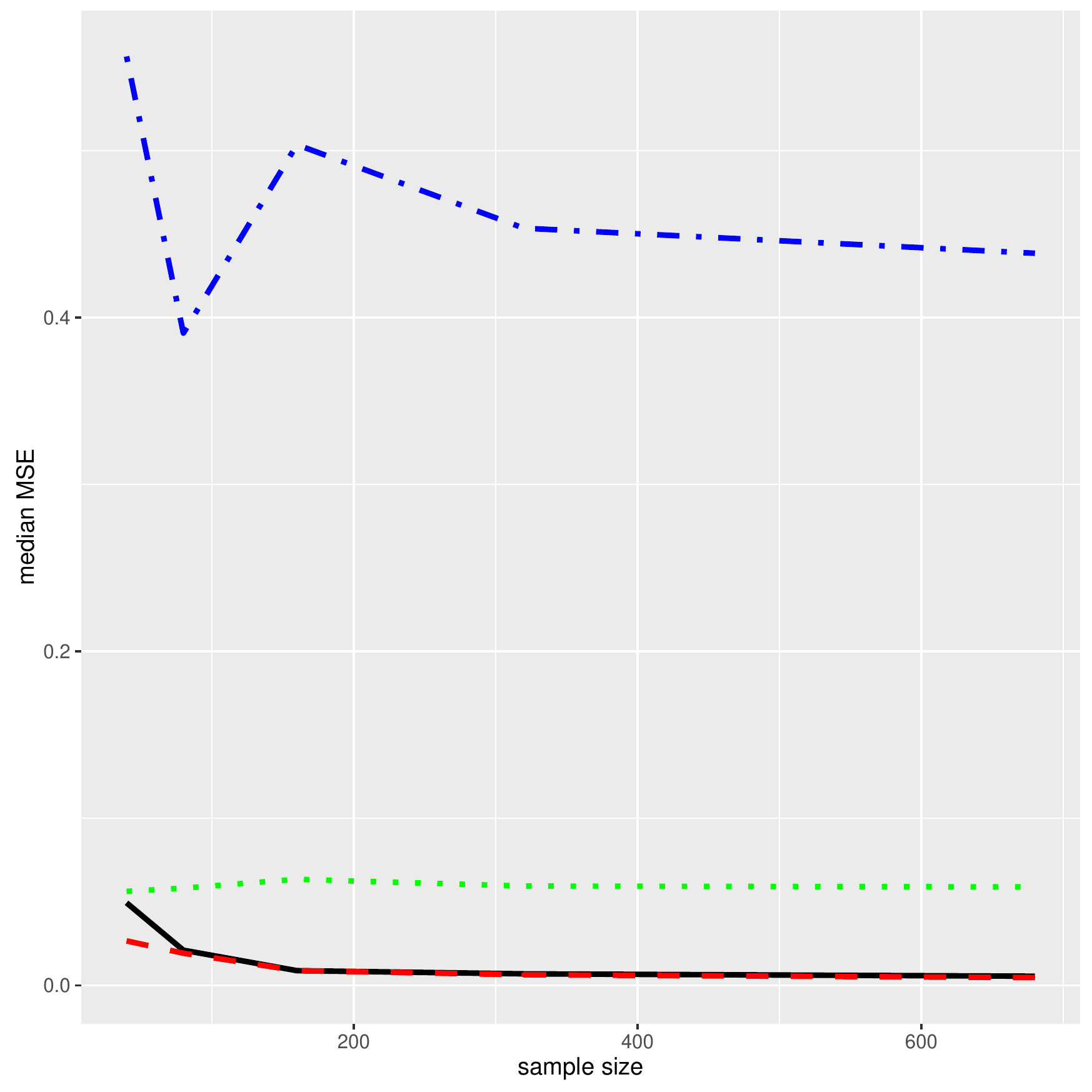}

}%
\end{minipage}\hfill{}%
\begin{minipage}[t]{0.4\columnwidth}%
\subfloat[Maximum MSE]{\includegraphics[scale=0.25]{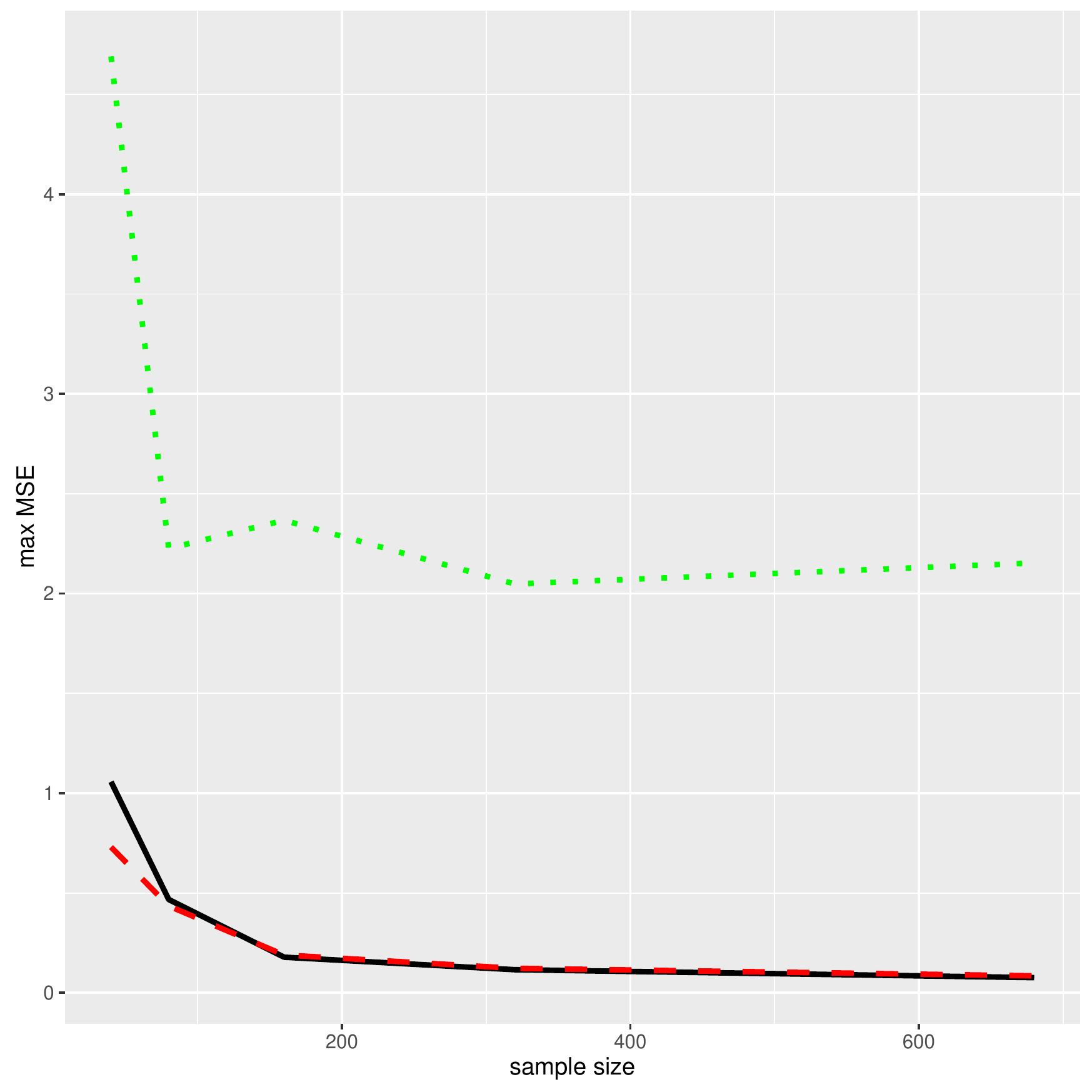}

}%
\end{minipage}\hfill{}

\hfill{}%
\begin{minipage}[t]{0.8\columnwidth}%
{\footnotesize{}We show the MSE corresponding to the algorithms described
in the previous section. The blue dotted-dash line is the original
SMC-ABC algorithm. Our adaptive SMC algorithm with $\lambda=90$ is
given by the red dashed line. The green dotted line is the MSE obtained
by using ``true'' samples from the false model. The solid black
line is the MSE of our adptive SMC with adaptive chosen inverse 
temperature (algorithm of Section \ref{subsec:Adaptive-bounds}).
All the computations are done at constant computational cost.}%
\end{minipage}\hfill{}

\rule[0.5ex]{1\columnwidth}{1pt}
\end{figure}

In Figure \ref{fig:MSE} we show as a function of the sample size
the decrease in MSE. The algorithms are also compared to the loss
that we would obtain from using the standard Bayesian posterior for
the wrong model (blue line). We see that our exponential weight and
adaptive exponential weight ABC (respectively solid black and dashed
red lines) perform well in this framework. 

We also give as an illustration the kind of empirical bounds one can
get using Section \ref{subsec:Empirical-bounds}. We show in Figure
\ref{fig:emp-bound} a bound on the Euclidean distance between the
moments of the statistics considered. The $y$-axis is given in logarithmic
scale. We see that for certain value of the inverse temperature the
bound on the generalized error is actually quite low. Recall however
that we can not use directly the bound for choosing $\lambda$ as
the bound is true in probability for individual values of the parameter.
One could however use a union bound or the adaptive ABC described
in Section \ref{subsec:Adaptive-bounds}. This is in fact very similar
in nature.

\begin{figure}[h!]
\caption{Empirical bound}

\includegraphics[scale=0.3]{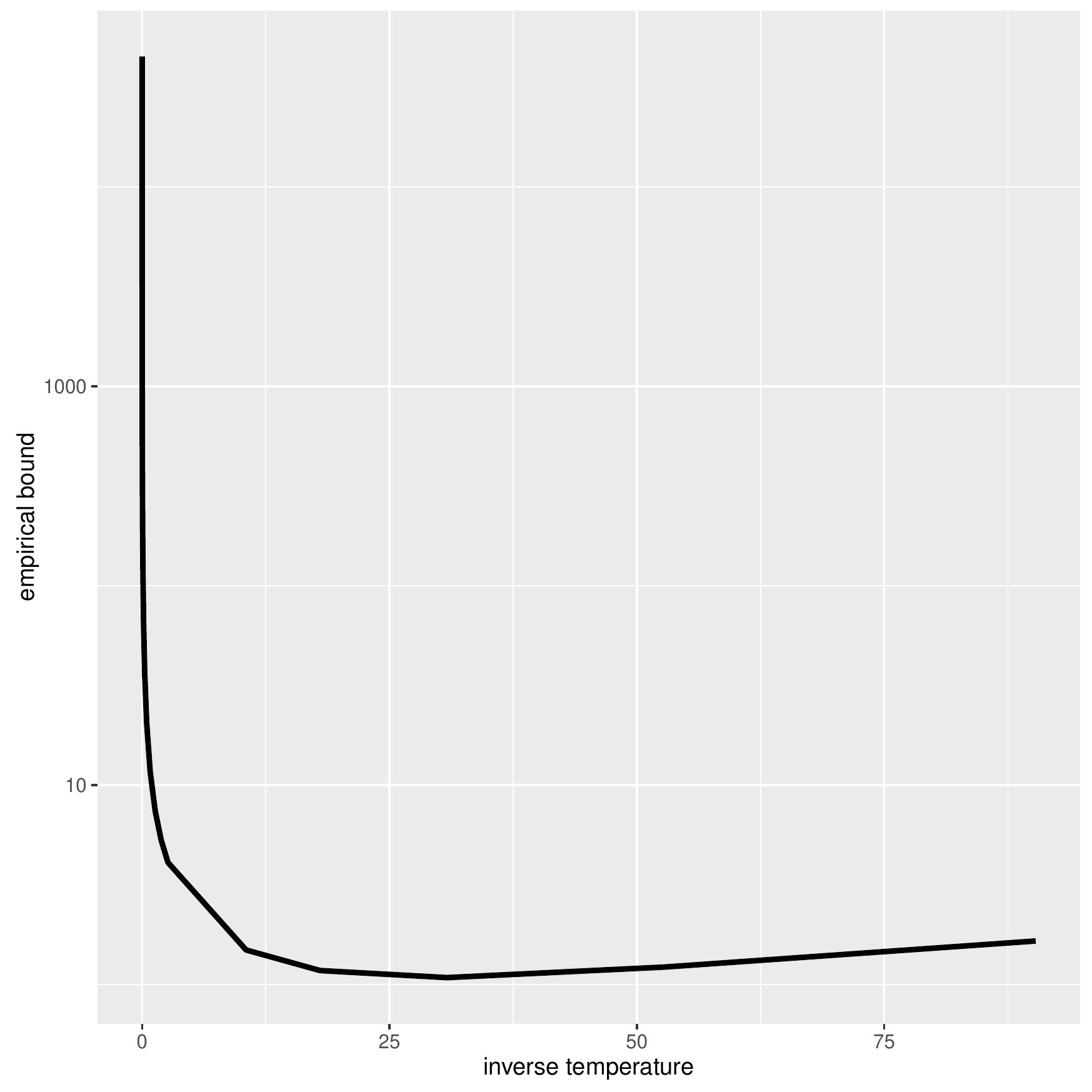}\label{fig:emp-bound}

\hfill{}%
\begin{minipage}[t]{0.8\columnwidth}%
{\footnotesize{}As an illustration we show an empirical bound obtained
for our algorithm under the misspecified setting. The bound is true
up to 95\% probability. The bound however as explained in Section
\ref{sec:Theoretical-bounds} is not true jointly (i.e. for the whole
value of $\lambda$ simultaneously). Here we can just observe the
order of magnitude, implying that the algorithm has significantly
learned from the data. }%
\end{minipage}\hfill{}

\rule[0.5ex]{1\columnwidth}{1pt}
\end{figure}

More details on the experiments can be found in appendix \ref{sec:appendix-implementation}.
\subsection{Experiment 3}

In this next experiment we use the same model, still under misspecification
only we propose this time to use as summary statistics the sequence
of indicator functions $\left(\textbf{1}_{x<t_{i}}\right)_{i=1}^{n}$
for a partition $\left(t_{i}\right)_{i}$ of the interval $[-5,5]$.
 The distance is taken to be the max-norm. Figure \ref{fig:indicator}
illustrates the effect of combining the indicators functions as statistics
and the distance based on the max-norm. The left panel shows that
the max error of the ABC algorithm is smaller, this however leads
to a uni-modal model (green curve on the right panel). On the other
hand the MCMC sampler estimates a model with two distinct modes. This
illustrates the importance of the choice of the different components
(distance and summary statistics) as they have a direct impact on
the final quantity one wants to control.

\begin{figure}[h!]
\caption{Indicators as a statistic }
\label{fig:indicator}

\hfill{}%
\begin{minipage}[t]{0.4\columnwidth}%
\subfloat[Error on the statistics]{\includegraphics[scale=0.25]{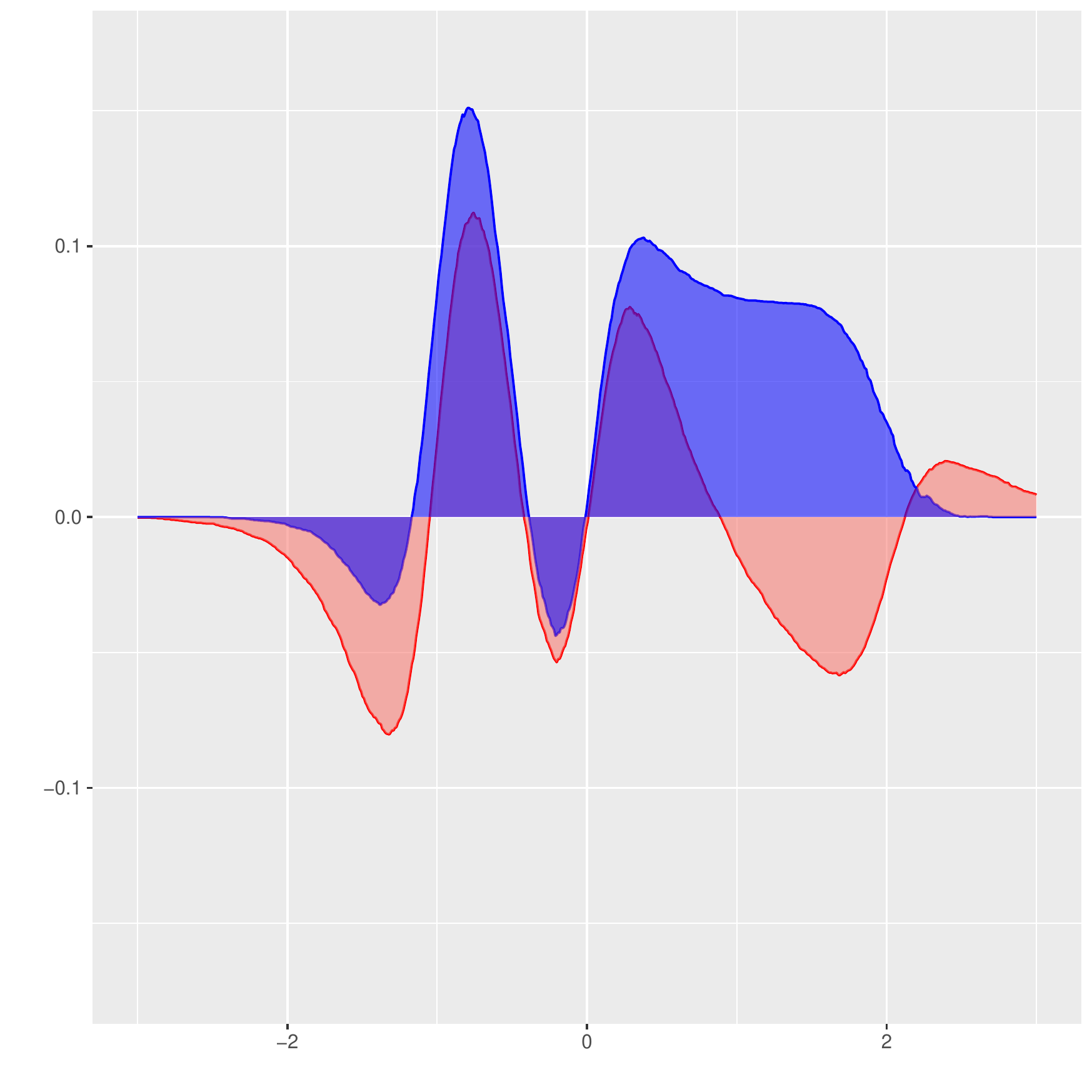}

}%
\end{minipage}\hfill{}%
\begin{minipage}[t]{0.4\columnwidth}%
\subfloat[Density, and predicted density ]{\includegraphics[scale=0.25]{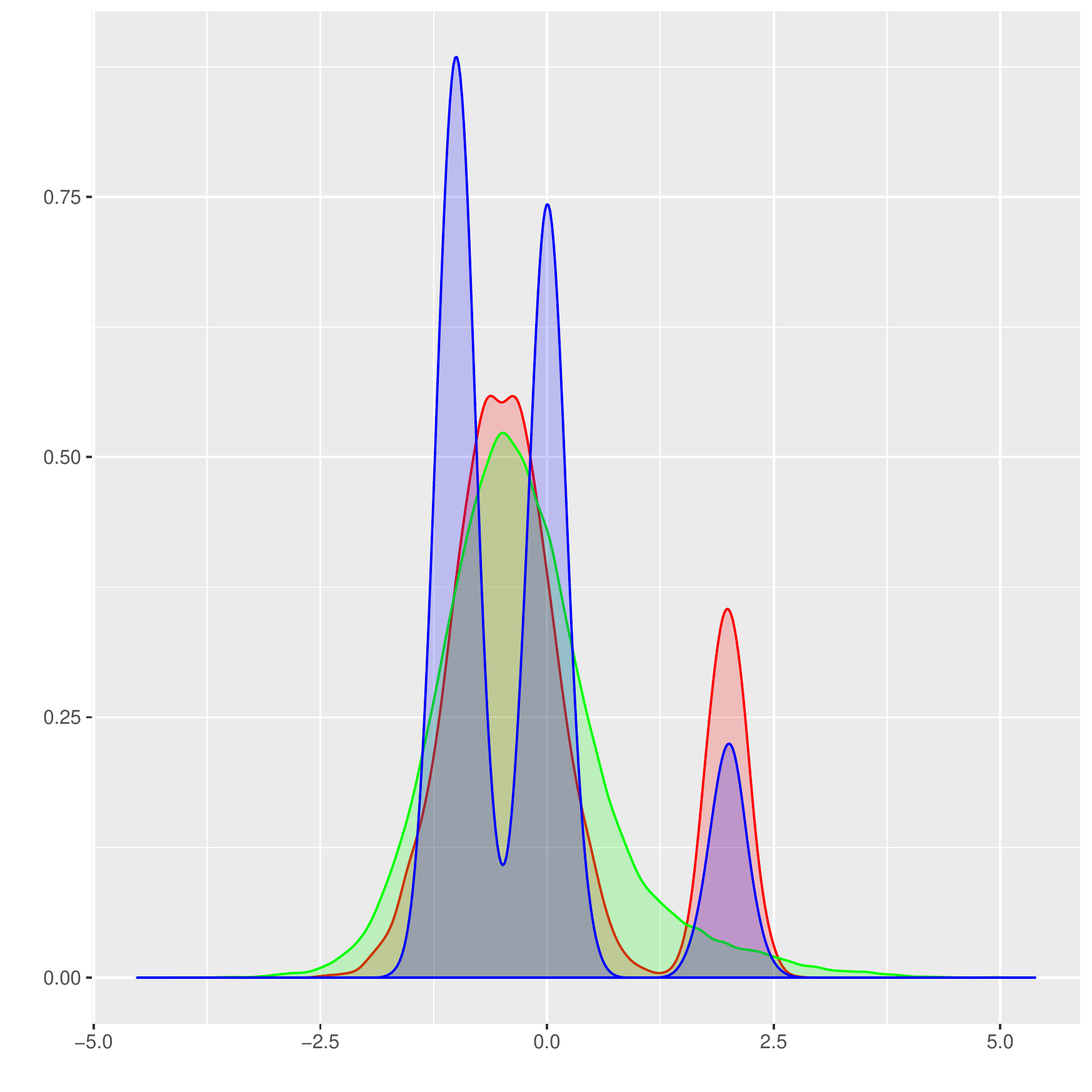}

}%
\end{minipage}\hfill{}

\hfill{}%
\begin{minipage}[t]{0.8\columnwidth}%
{\footnotesize{}We show some results for the sequence of statistics
$\left(\textbf{1}_{x<t_{i}}\right)_{i=1}^{n}$. On the left figure
we show the difference between the statistics obtained from our sampler
and the true statistics (averaged over the true model). In red we show
the error obtained from the ABC sampler and in blue the one obtained
from an MCMC sampler. On the right panel the estimated density of
the observation (blue) and the predicted data according to the MCMC
sampler (red ) and the ABC approximation (green).}%
\end{minipage}\hfill{}

\rule[0.5ex]{1\columnwidth}{1pt}
\end{figure}

\section{Proofs and supporting results\label{sec:Proofs-and-supporting}}

\subsection{Preliminary results}

We start be recalling the following well known formulae (e.g. \cite{Catoni2007} Chapter 1)

For any $\varrho\ll\pi$ and $h\in L^{1}(\varrho)$,
\[
\mathcal{K}(\varrho,\pi_{e^h})=\lambda\int h{\rm d}\varrho+\mathcal{K}(\varrho,\pi)+\log\int\exp(-h){\rm d}\pi.
\]
where $\pi_{e^h}:=\frac{e^{h}d\pi}{\pi\left[e^{h}\right]}$. 
 This implies two well known facts 
\begin{align}
 \pi_{e^h}& =\arg\min_{\varrho\in\mathcal{M}_{+}^{1}(\Theta)}\left\{ \int h{\rm d}\varrho+\mathcal{K}(\varrho,\pi)\right\} ,\label{eq:var1}\\
-\log\int\exp(-h){\rm d}\pi & =\min_{\varrho\in\mathcal{M}_{+}^{1}(\Theta)}\left\{ \int h{\rm d}\varrho+\mathcal{K}(\varrho,\pi)\right\} .\label{eq:var2}
\end{align}

We will use those two equations repeatedly in what follows.

\subsubsection*{Notation specific to the section}

In this section we will use the following shorthand notation for any
probability measure $\nu_{1}$,$\nu_{2}$, 
\[
w_{d}(\nu_{1},\nu_{2})=\int d(S(x),S(y))\nu_{1}(dx)\nu_{2}(dy).
\]

\subsection{Proof of Proposition \ref{prop:Emp}\label{subsec:prop:emp}}

The proofs of this section follow closely the techniques used in \cite{Catoni2007}.
\begin{lem}
\label{lem:unionbound}Under Assumption A\ref{assu:Hoeffding} for
any $\varrho\in\mathcal{M}_{+}^{1}$, $\lambda\in\mathcal{I}$, $n\geq1$
and $\epsilon>0$ jointly with probability at least $1-\epsilon$,

\begin{align}
\omega_{d}(\varrho,\mathbb{P})\leq & \omega_{d}\left(\varrho,\delta_{Y^{n}}\right)+\frac{1}{\lambda}\mathcal{K}(\varrho,\pi_{\theta}\pi)+\frac{f(n,\lambda)}{\lambda}+\frac{1}{\lambda}\log\frac{2}{\epsilon}\label{eq:empirical2}\\
\omega_{d}\left(\varrho,\delta_{Y^{n}}\right) & \leq\omega_{d}(\varrho,\mathbb{P})+\frac{1}{\lambda}\mathcal{K}(\varrho,\pi_{\theta}\pi)+\frac{f(n,\lambda)}{\lambda}+\frac{1}{\lambda}\log\frac{2}{\epsilon}.\nonumber 
\end{align}
\end{lem}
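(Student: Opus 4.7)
My plan is to follow the standard PAC-Bayesian recipe: exponentiate, apply Markov (Chernoff) on the data, then invoke the Donsker--Varadhan variational formula to convert the quenched inequality into one that is uniform in $\varrho$. Finally, a union bound glues the two halves together.

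First, using that $D_n^S=d(S(X^n),S(Y^n))$ depends only on $X^n$ (not on $\theta$) and that $m_X=\pi_\theta\pi$ as a marginal on $\mathcal X^n$, Assumption A\ref{assu:Hoeffding} rewrites as
\[
\mathbb{E}_{Y^n\sim\mathbb P}\!\left[\int\!\!\int e^{-\lambda(D_n^S-\mathbb P D_n^S)}\pi_\theta(dX^n)\pi(d\theta)\right]\le e^{f(n,\lambda)},
\]
and similarly with the sign flipped. By Markov's inequality, each of these two non-negative random variables is bounded by $\tfrac{2}{\epsilon}e^{f(n,\lambda)}$ except on an event of probability at most $\epsilon/2$. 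Taking a logarithm, on the good event
\[
\log\int\!\!\int e^{-\lambda(D_n^S-\mathbb P D_n^S)}\pi_\theta(dX^n)\pi(d\theta)\le f(n,\lambda)+\log\tfrac{2}{\epsilon},
\]
and analogously for the other sign.

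Next, apply the variational formula \eqref{eq:var2} with $h(X^n,\theta)=\lambda(D_n^S-\mathbb P D_n^S)$ and reference measure $\pi_\theta\pi$. For every $\varrho\in\mathcal M_1^+(\Theta\times\mathcal X^n)$ the formula yields
\[
-\lambda\int(D_n^S-\mathbb P D_n^S)\,d\varrho-\mathcal K(\varrho,\pi_\theta\pi)\;\le\;\log\int\!\!\int e^{-\lambda(D_n^S-\mathbb P D_n^S)}\pi_\theta(dX^n)\pi(d\theta).
\]
Combining with the deterministic bound from the previous step, dividing by $\lambda>0$, and recognizing that $\int\mathbb P D_n^S\,d\varrho=\omega_d(\varrho,\mathbb P)$ and $\int D_n^S\,d\varrho=\omega_d(\varrho,\delta_{Y^n})$, gives
\[
\omega_d(\varrho,\mathbb P)-\omega_d(\varrho,\delta_{Y^n})\le\tfrac{1}{\lambda}\mathcal K(\varrho,\pi_\theta\pi)+\tfrac{f(n,\lambda)}{\lambda}+\tfrac{1}{\lambda}\log\tfrac{2}{\epsilon},
\]
valid uniformly in $\varrho$ on an event of probability $\ge 1-\epsilon/2$. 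The same argument applied to the other half of A\ref{assu:Hoeffding} (i.e. $e^{-\lambda(\mathbb P D_n^S-D_n^S)}$) produces the reverse inequality on another event of probability $\ge 1-\epsilon/2$.

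Finally, a union bound over the two exceptional events shows both inequalities hold simultaneously with probability $\ge 1-\epsilon$, which is exactly the claim. The only subtle point is that the variational bound must be applied \emph{after} the Markov step, so the deviation bound becomes uniform over all $\varrho$ without paying an additional sup; there is no genuine obstacle beyond checking that $\varrho$ on $\Theta\times\mathcal X^n$ pairs correctly with $\pi_\theta\pi$ (the $\theta$-coordinate integrates trivially since $D_n^S$ is $\theta$-free).
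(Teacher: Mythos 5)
Your proposal is correct and follows essentially the same route as the paper: the Donsker--Varadhan variational formula applied to $h=\lambda(D_n^S-\mathbb{P}D_n^S)$ with reference measure $\pi_\theta\pi$, a Chernoff/Markov bound on the exponential moment controlled by Assumption A1, and a union bound over the two signs yielding the $\log\frac{2}{\epsilon}$ term. The paper merely writes the supremum over $\varrho$ inside the probability from the outset (equating it with the log-integral via the variational identity) whereas you apply Markov first and then deduce the uniform-in-$\varrho$ bound on the good event; these are the same argument in a different order.
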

\begin{proof}
We start by making successive use of the equality in (\ref{eq:var1})
on the first line, Markov's inequality on the second and Assumption
A\ref{assu:Hoeffding} on the third. For any $\eta>0$

\begin{align*}
\mathbb{P}\left(\sup_{\varrho\in\mathcal{M}_{1}^{+}(\Theta\times\mathcal{X}^{n})}\left\{ \lambda\varrho(D_{n}^{S}-\mathbb{P}D_{n}^{S})-\mathcal{K}(\varrho,\pi_{\theta}\pi)+\eta\right\} \geq0\right) & =\mathbb{P}\left(\log m_{X}\left[e^{\lambda(D_{n}^{S}-\mathbb{P}D_{n}^{S})+\eta}\right]\geq0\right)\\
 & \leq m_{X}\otimes\mathbb{P}\left[e^{\lambda(D_{n}^{S}-\mathbb{P}D_{n}^{S})+\eta}\right]\\
 & \leq e^{f(n,\lambda)+\eta}.
\end{align*}

The inequality is true for any $\eta$ in particular choose $\eta=-\log\frac{1}{\epsilon}-f(n,\lambda)$,

\[
\mathbb{P}\left(\sup_{\varrho\in\mathcal{M}_{1}^{+}(\Theta\times\mathcal{X}^{n})}\left\{ \lambda\varrho(D_{n}^{S}-\mathbb{P}D_{n}^{S})-\mathcal{K}(\varrho,\pi_{\theta}\pi)\right\} -\log\frac{1}{\epsilon}-f(n,\lambda)\geq0\right)\leq\epsilon.
\]

Therefore for any $\varrho\in\mathcal{M}_{+}^{1}(\Theta\times\mathcal{X}^{n})$
with probability at least $1-\epsilon$ ,

\begin{align}
\lambda\varrho(D_{n}^{S}-\mathbb{P}D_{n}^{S}) & \leq\mathcal{K}(\varrho,\pi_{\theta}\pi)+f(n,\lambda)+\log\frac{1}{\epsilon}.\label{eq:prob-bound}
\end{align}

We note that owing to the symmetry of Assumption A\ref{assu:Hoeffding}
we can apply the same reasoning to $-D_{n}^{S}$. Hence we get with
probability at least $1-\epsilon$,
\begin{align}
\lambda\varrho(\mathbb{P}D_{n}^{S}-D_{n}^{S}) & \leq\mathcal{K}(\varrho,\pi_{\theta}\pi)+f(n,\lambda)+\log\frac{1}{\epsilon}.\label{eq:emplemma}
\end{align}

We conclude the proof by applying a union bound to both equations,
rearranging the terms and using the definition of $w_{d}$. 
\end{proof}
The proof of the empirical bound (Proposition \ref{prop:Emp}) can
be obtain from equation (\ref{eq:emplemma}). Using Assumption A\ref{assu:distance}
and Jensen's inequality with probability at least $1-\epsilon$, 
\[
d(\rho(S),\mathbb{P}(S))\leq\varrho(\mathbb{P}D_{n}^{S})\leq\rho\left(D_{n}^{S}\right)+\frac{1}{\lambda}\left(\mathcal{K}(\varrho,\pi_{\theta}\pi)+f(n,\lambda)+\log\frac{1}{\epsilon}\right).
\]

This proves the first part of proposition \ref{prop:Emp}, the second
part is application of equation \ref{eq:var2}, by taking the infimum
of the upper bound in $\rho\in\mathcal{M}_{+}^{1}$ we obtain the
desired result.

\subsection{Proof of Lemma \ref{lem:bound}\label{subsec:Proof-of-lemma1}}

We start from the following lemma, a simple extension of Lemma \ref{lem:unionbound},
\begin{lem}
\label{lem:lemma_oracle1}Under Assumption A\ref{assu:Hoeffding},
for any $\lambda\in\mathcal{I}$, $n\geq1$and $\epsilon>0$ we have
with probability at least $1-\epsilon$,

\[
\omega_{d}(\varrho_{\lambda},\mathbb{P})\leq\inf_{\varrho\in\mathcal{M}_{+}^{1}(\Theta\times\mathcal{X}^{n})}\left\{ \omega_{d}(\varrho,\mathbb{P})+\frac{2}{\lambda}\mathcal{K}(\varrho,\pi_{\theta}\pi)\right\} +2\frac{f(\lambda,n)}{\lambda}+\frac{2}{\lambda}\log\frac{2}{\epsilon}.
\]
\end{lem}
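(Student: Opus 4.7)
The plan is to deduce the oracle inequality by combining Lemma \ref{lem:unionbound} with the variational characterisation of $\varrho_\lambda$. Recall from (\ref{eq:var1}) that the minimiser of $\varrho\mapsto\varrho(\lambda D_n^S)+\mathcal{K}(\varrho,\pi_\theta\pi)$ over $\mathcal{M}_+^1(\Theta\times\mathcal{X}^n)$ is exactly the Gibbs measure proportional to $e^{-\lambda D_n^S}\pi_\theta(dX^n)\pi(d\theta)$, which is $\varrho_\lambda$. In particular, dividing by $\lambda$, for every $\varrho\ll \pi_\theta\pi$,
\[
\omega_d(\varrho_\lambda,\delta_{Y^n})+\frac{1}{\lambda}\mathcal{K}(\varrho_\lambda,\pi_\theta\pi)\;\leq\;\omega_d(\varrho,\delta_{Y^n})+\frac{1}{\lambda}\mathcal{K}(\varrho,\pi_\theta\pi).
\]

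Next I apply Lemma \ref{lem:unionbound}, which gives the two PAC-Bayes bounds jointly with probability at least $1-\epsilon$. The key observation is that the first bound in Lemma \ref{lem:unionbound} was obtained by a uniform-in-$\varrho$ argument (via (\ref{eq:var1}) and Markov's inequality), so it applies in particular to the data-dependent measure $\varrho_\lambda$. I would therefore first use the ``from empirical to true'' direction at $\varrho_\lambda$:
\[
\omega_d(\varrho_\lambda,\mathbb{P})\;\leq\;\omega_d(\varrho_\lambda,\delta_{Y^n})+\frac{1}{\lambda}\mathcal{K}(\varrho_\lambda,\pi_\theta\pi)+\frac{f(n,\lambda)}{\lambda}+\frac{1}{\lambda}\log\frac{2}{\epsilon},
\]
then invoke the minimising property above to replace $\varrho_\lambda$ by an arbitrary $\varrho$ in the first two terms, and finally use the ``from true to empirical'' direction at $\varrho$ to rewrite $\omega_d(\varrho,\delta_{Y^n})\leq \omega_d(\varrho,\mathbb{P})+\tfrac1\lambda\mathcal{K}(\varrho,\pi_\theta\pi)+\tfrac{f(n,\lambda)}{\lambda}+\tfrac{1}{\lambda}\log\tfrac{2}{\epsilon}$.

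Chaining these three inequalities produces, on the same event of probability $1-\epsilon$,
\[
\omega_d(\varrho_\lambda,\mathbb{P})\;\leq\;\omega_d(\varrho,\mathbb{P})+\frac{2}{\lambda}\mathcal{K}(\varrho,\pi_\theta\pi)+\frac{2f(n,\lambda)}{\lambda}+\frac{2}{\lambda}\log\frac{2}{\epsilon}
\]
for every $\varrho\in\mathcal{M}_+^1(\Theta\times\mathcal{X}^n)$, and taking the infimum over $\varrho$ on the right-hand side yields the claim.

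The only delicate point is making sure that applying Lemma \ref{lem:unionbound} at the random measure $\varrho_\lambda$ is legitimate; this is why it matters that Lemma \ref{lem:unionbound} was proved with a supremum over $\varrho\in\mathcal{M}_+^1$ inside the probability, so the event on which the bound fails does not depend on a particular choice of $\varrho$. Beyond that, the proof is essentially a symmetrisation trick using the variational identity, and no further estimates are needed.
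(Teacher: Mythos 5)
Your proposal is correct and follows essentially the same route as the paper: apply Lemma \ref{lem:unionbound} at $\varrho_{\lambda}$ (legitimate because the bound is uniform in $\varrho$), use the variational identity (\ref{eq:var1}) to pass to the infimum over $\varrho$ of the empirical bound, and then use the reverse direction of Lemma \ref{lem:unionbound} to replace $\omega_{d}(\varrho,\delta_{Y^{n}})$ by $\omega_{d}(\varrho,\mathbb{P})$. Your explicit remark on why the PAC-Bayes bound may be evaluated at the data-dependent measure $\varrho_{\lambda}$ is a point the paper leaves implicit, but the argument is the same.
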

\begin{proof}
By applying the inequalities of Lemma \ref{lem:unionbound} to the
case where $\varrho$ is the ABC measure and noticing that by equation
\ref{eq:var1} this measure is solution of the variational problem
minimizing the empirical bound \ref{eq:empirical2} we get jointly
with probability at least $1-\epsilon$,
\begin{align*}
\omega_{d}(\varrho_{\lambda},\mathbb{P})\leq & \inf_{\varrho\in\mathcal{M}_{+}^{1}(\Theta\times\mathcal{X}^{n})}\left\{ \omega_{d}\left(\varrho,\delta_{Y^{n}}\right)+\frac{1}{\lambda}\mathcal{K}(\varrho,\pi_{\theta}\pi)\right\} +\frac{f(\lambda,n)}{\lambda}+\frac{1}{\lambda}\log\frac{2}{\epsilon}\\
\omega_{d}\left(\varrho_{\lambda},\delta_{Y^{n}}\right) & \leq\omega_{d}(\varrho_{\lambda},\mathbb{P})+\frac{1}{\lambda}\mathcal{K}(\varrho_{\lambda},\pi_{\theta}\pi)+\frac{f(\lambda,n)}{\lambda}+\frac{1}{\lambda}\log\frac{2}{\epsilon}.
\end{align*}

We get the desired result by combining both equations. 
\end{proof}
The proof of Lemma \ref{lem:bound} starts from noticing that under
Assumption A\ref{assu:distance} we get from Lemma \ref{lem:lemma_oracle1}
by Jensen's inequality, with probability at least $1-\epsilon$, 
\[
d(\varrho_{\lambda}(S),\mathbb{P}S)\leq\omega_{d}(\varrho_{\lambda},\mathbb{P})\leq\inf_{\varrho\in\mathcal{M}_{+}^{1}(\Theta\times\mathcal{X}^{n})}\left\{ \omega_{d}(\varrho,\mathbb{P})+\frac{2}{\lambda}\mathcal{K}(\varrho,\pi_{\theta}\pi)\right\} +2\frac{f(\lambda,n)}{\lambda}+\frac{2}{\lambda}\log\frac{2}{\epsilon}.
\]

Recall that the oracle parameter $\theta^{\star}$ is defined as the
minimizer of $\theta\mapsto d(\pi_{\theta}(S),\mathbb{P}S)$ from
definition \ref{def:The-oracle-parameter}. The infimum over all measures
can be upper bounded by using the following parametric family,

\[
\varrho_{\theta^{\star},\delta}(dX^{n},d\theta)=\frac{\pi_{\theta}(dX^{n})\pi(d\theta)\textbf{1}_{\Vert\theta-\theta^{\star}\Vert<\delta}}{\pi(\Vert\theta-\theta^{\star}\Vert<\delta)}.
\]

we can therefore weaken the bound, with probability at least $1-\epsilon$,

\[
d(\varrho_{\lambda}(S),\mathbb{P}S)\leq\inf_{\delta\in(0,\bar{\delta})}\left\{ \omega_{d}(\varrho_{\theta^{\star},\delta},\mathbb{P})+\frac{2}{\lambda}\mathcal{K}(\varrho_{\theta^{\star},\delta},\pi_{\theta}\pi)\right\} +2\frac{f(\lambda,n)}{\lambda}+\frac{2}{\lambda}\log\frac{2}{\epsilon}.
\]

Direct calculation yields that the KL divergence is expressed as,

\begin{equation}
\mathcal{K}(\varrho_{\theta^{\star},\delta},\pi_{\theta}\pi)=-\log\pi\left(\Vert\theta-\theta^{\star}\Vert<\delta\right).\label{eq:small-ball}
\end{equation}

It remains to deal with the term $\omega_{d}(\varrho_{\theta^{\star},\delta},\mathbb{P})$,
we have from the triangle inequality,

\[
d(S(X^{n}),S(Y^{n}))\leq d(S(X^{n}),\pi_{\theta}S)+d(\pi_{\theta}S,\pi_{\theta^\star} S)+d(\pi_{\theta^\star}S,\mathbb{P}S)+d(\mathbb{P}S,S(Y^{n})).
\]

We use this inequality in the definition of $w_{d}$,

\begin{align*}
\omega_{d}(\varrho_{\theta^{\star},\delta},\mathbb{P}) & =\int d(S(X^{n}),S(Y^{n}))\mathbb{P}(dY^{n})\frac{\pi_{\theta}(dX^{n})\pi(d\theta)\textbf{1}_{\Vert\theta-\theta^{\star}\Vert<\delta}}{\pi(\Vert\theta-\theta^{\star}\Vert<\delta)}\\
 & \leq \sup_{\theta:\Vert\theta-\theta^{\star}\Vert<\delta}\pi_\theta d(S,\pi_{\theta}S)+\int d(\pi_{\theta}S,\pi_{\theta^\star} S)\varrho_{\theta,\delta}(\rm{d}\theta)+d(\pi_{\theta^\star}S,\mathbb{P}S)+\mathbb{P}d(\mathbb{P}S,S))
\end{align*}

Putting everything together yields the correct result.

\subsection{Proof of Theorem \ref{thm:theta}\label{subsec:thm-theta}}

We start by using Assumption A\ref{assu:inject} and the triangle
inequality
\[
\varrho_{\lambda}\left\{ \left\Vert \theta-\theta^{\star}\right\Vert \right\} \leq K\varrho_{\lambda}\left\{ d\left(\pi_{\theta}S,\pi_{\theta^{\star}}S\right)\right\} \leq K\varrho_{\lambda}\left\{ d\left(\pi_{\theta}S,S\right)\right\} +K\varrho_{\lambda}\left\{ d\left(S,\mathbb{P}S\right)\right\} +Kd\left(\pi_{\theta^{\star}}S,\mathbb{P}S\right).
\]
 The second term on the right hand side is bounded above by Lemma
\ref{lem:bound}, the last term is the oracle risk. We concentrate
on the bound for $\varrho_{\lambda}\left\{ d\left(\pi_{\theta}S,S\right)\right\} $.

We start by recalling the following well known fact, fix a probability
measure $\pi$ then 
\[
\forall\varrho\in\mathcal{M}_{+}^{1},\lambda\in\mathbb{R}_{+\star}\quad\varrho(h)\leq\frac{1}{\lambda}\log\pi\left(e^{\lambda h}\right)+\frac{1}{\lambda}\mathcal{K}(\varrho,\pi).
\]
 We apply this inequality to $\varrho_{\lambda}\left\{ d\left(\pi_{\theta}S,S\right)\right\} $
yielding
\begin{align*}
\varrho_{\lambda}\left\{ d\left(\pi_{\theta}S,S\right)\right\}  & \leq\frac{1}{\lambda}\log\pi\pi_{\theta}\left(e^{\lambda d\left(\pi_{\theta}S,S\right)}\right)+\frac{1}{\lambda}\mathcal{K}\left(\varrho_{\lambda},\pi\pi_{\theta}\right)\\
 & \leq\frac{1}{\lambda}\log\pi\pi_{\theta}\left(e^{\lambda d\left(\pi_{\theta}S,S\right)}\right)-\frac{1}{\lambda}\log\pi\left(e^{-\lambda D_{n}^{S}}\right)-\varrho_{\lambda}\left(D_{n}^{S}\right),
\end{align*}
where the last line comes from an expansion of the KL term. By positivity
of the distance and by multiplication by $e^{\lambda\pi_{\theta}d\left(\pi_{\theta}S,S\right)}$
we get 
\[
\varrho_{\lambda}\left\{ d\left(\pi_{\theta}S,S\right)\right\} \leq\frac{1}{\lambda}\log\pi\left\{ \pi_{\theta}\left(e^{\lambda\left\{ d\left(\pi_{\theta}S,S\right)-\pi_{\theta}d\left(\pi_{\theta}S,S\right)\right\} }\right)e^{\lambda\pi_{\theta}d\left(\pi_{\theta}S,S\right)}\right\} -\frac{1}{\lambda}\log\pi\left(e^{\lambda D_{n}^{S}}\right).
\]
 Under Assumption A\ref{assu:Hoeffding-model} we can bound the first
part

\begin{align*}
\varrho_{\lambda}\left\{ d\left(\pi_{\theta}S,S\right)\right\}  & \leq\frac{\tilde{f}(n,\lambda)}{\lambda}+\frac{1}{\lambda}\log\pi\left\{ e^{\lambda\pi_{\theta}d\left(\pi_{\theta}S,S\right)}\right\} -\frac{1}{\lambda}\log\pi\left(e^{\lambda D_{n}^{S}}\right)\\
 & \leq\frac{\tilde{f}(n,\lambda)}{\lambda}+\sup_{\theta\in\Theta}\pi_{\theta}d\left(\pi_{\theta}S,S\right)+\inf_{\varrho\in\mathcal{M}_{1}^{+}}\left\{ \varrho(D_{n}^{S})+\frac{1}{\lambda}\mathcal{K}(\varrho,\pi)\right\} .
\end{align*}
 It remains to treat the last part, notice that we can make use of
equation (\ref{eq:prob-bound}) with probability a least $1-\epsilon$
we have 
\[
\varrho_{\lambda}\left\{ d\left(\pi_{\theta}S,S\right)\right\} \leq\frac{\tilde{f}(n,\lambda)}{\lambda}+\sup_{\theta\in\Theta}\pi_{\theta}d\left(\pi_{\theta}S,S\right)+\inf_{\varrho\in\mathcal{M}_{1}^{+}}\left\{ \varrho(\mathbb{P}D_{n}^{S})+\frac{2}{\lambda}\mathcal{K}(\varrho,\pi_{\theta}\pi)\right\} +\frac{1}{\lambda}\log\frac{1}{\epsilon}.
\]

We can now use the developments of Lemma \ref{lem:bound}, putting
$\varrho_{\theta^{\star},\delta}(dX^{n},d\theta)=\frac{\pi_{\theta}(dX^{n})\pi(d\theta)\textbf{1}_{\Vert\theta-\theta^{\star}\Vert<\delta}}{\pi(\Vert\theta-\theta^{\star}\Vert<\delta)}$

\begin{align*}
\inf_{\varrho\in\mathcal{M}_{1}^{+}}\left\{ \varrho(\mathbb{P}D_{n}^{S})+\frac{2}{\lambda}\mathcal{K}(\varrho,\pi)\right\}  & \leq\inf_{\delta\in(0,\bar{\delta})}\left\{ \omega_{d}(\varrho_{\theta^{\star},\delta},\mathbb{P})+\frac{2}{\lambda}\mathcal{K}(\varrho_{\theta^{\star},\delta},\pi_{\theta}\pi)\right\} \\
 & \leq \sup_{\theta:\Vert\theta-\theta^{\star}\Vert<\delta}\pi_{\theta}d(S,\mathbb{\pi_{\theta}}S)+\delta L+d(\pi_{\theta^{\star}}S,\mathbb{P}S)+\mathbb{P}d(\mathbb{P}S,S)-\log\pi\left(\Vert\theta-\theta^{\star}\Vert<\delta\right),
\end{align*}
 where we have used equation (\ref{eq:small-ball}) and equation (\ref{eq:triangle-ineq}).
We get the desired result by putting everything together and using
a union bound.

\subsection{Proof of Lemma \ref{lem:oracle-adap}}

Let $\mathcal{I}\subset\mathbb{R}_{+}$be the possible range of the
inverse temperature parameter. Define a prior measure $\nu$ on $(\mathcal{I},\mathcal{B}(\mathcal{I}))$.
We follow the lines of the proof of Lemma \ref{lem:bound} only the
variational procedure is now taken on measures on $(\Theta\times\mathcal{X}^{n}\times\mathcal{I})$.
\begin{lem}
\label{lem:unionbound-adap}Under Assumption A\ref{assu:Hoeffding}
for any $\mu\in\mathcal{M}_{+}^{1}(\Theta\times\mathcal{X}^{n}\times\mathcal{I})$,
$n\geq1$ and $\epsilon>0$ jointly with probability at least $1-\epsilon$,
\begin{align*}
\left.\begin{array}{c}
\mu\left\{ \lambda(\mathbb{P}D_{n}^{S}-D_{n}^{S})\right\} \\
\mu\left\{ \lambda(D_{n}^{S}-\mathbb{P}D_{n}^{S})\right\} 
\end{array}\right\}  & \leq\mu\left[f(n,\lambda)\right]+\mathcal{K}\left(\mu,\left[\pi_{\theta}\pi\right]\otimes\nu\right)+\log\frac{2}{\epsilon}.
\end{align*}
\end{lem}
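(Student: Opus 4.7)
The plan is to mimic the proof of Lemma \ref{lem:unionbound} verbatim, only carried out on the enlarged space $\Theta\times\mathcal{X}^{n}\times\mathcal{I}$ with reference measure $(\pi_{\theta}\pi)\otimes\nu$ rather than $\pi_{\theta}\pi$. The key observation is that Assumption A\ref{assu:Hoeffding} holds for every fixed $\lambda\in\mathcal{I}$, so integrating against the probability measure $\nu$ (via Fubini) costs nothing.

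First, apply the Donsker--Varadhan variational identity (the same one underlying equations (\ref{eq:var1})--(\ref{eq:var2})) to the measurable function
\[
h(\theta,X^{n},\lambda):=\lambda(D_{n}^{S}-\mathbb{P}D_{n}^{S})-f(n,\lambda)
\]
on the extended space. This gives, for every $\mu\in\mathcal{M}_{+}^{1}(\Theta\times\mathcal{X}^{n}\times\mathcal{I})$,
\[
\mu\!\left[\lambda(D_{n}^{S}-\mathbb{P}D_{n}^{S})\right]-\mu[f(n,\lambda)]-\mathcal{K}\!\left(\mu,(\pi_{\theta}\pi)\otimes\nu\right)\leq\log\int e^{h}\,d\!\left((\pi_{\theta}\pi)\otimes\nu\right).
\]

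Second, control the right-hand side in $\mathbb{P}$-probability. All integrands are non-negative, so Fubini applies, and then the first bound of A\ref{assu:Hoeffding} (used pointwise in $\lambda$) yields
\[
\mathbb{P}\!\left[\int e^{h}\,d\!\left((\pi_{\theta}\pi)\otimes\nu\right)\right]=\int e^{-f(n,\lambda)}(m_{X}\otimes\mathbb{P})\!\left[e^{\lambda(D_{n}^{S}-\mathbb{P}D_{n}^{S})}\right]d\nu(\lambda)\leq\int d\nu=1.
\]
Markov's inequality then produces an event of $\mathbb{P}$-probability at least $1-\epsilon/2$ on which $\log\int e^{h}\,d((\pi_{\theta}\pi)\otimes\nu)\leq\log(2/\epsilon)$, giving
\[
\mu\!\left[\lambda(D_{n}^{S}-\mathbb{P}D_{n}^{S})\right]\leq\mu[f(n,\lambda)]+\mathcal{K}\!\left(\mu,(\pi_{\theta}\pi)\otimes\nu\right)+\log\tfrac{2}{\epsilon}
\]
uniformly in $\mu$.

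Third, repeat the whole argument with $h$ replaced by $\lambda(\mathbb{P}D_{n}^{S}-D_{n}^{S})-f(n,\lambda)$, invoking the second bound in A\ref{assu:Hoeffding}. A union bound over the two events of probability at least $1-\epsilon/2$ glues the two directions together to obtain the joint statement with probability at least $1-\epsilon$.

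There is no real obstacle: the only mildly delicate ingredient is the use of Fubini to swap $\mathbb{E}_{\mathbb{P}}$ with the integral against $(\pi_{\theta}\pi)\otimes\nu$, which is immediate by non-negativity. The whole step of ``integrate out $\lambda$ against $\nu$'' is cost-free precisely because $\nu$ is a probability measure and A\ref{assu:Hoeffding} already absorbs the $\lambda$-dependence into $f(n,\lambda)$; this is the mechanism that will later let the adaptive bound of Lemma \ref{lem:oracle-adap} be optimized over $\xi\in\mathcal{F}\subset\mathcal{M}_{+}^{1}(\mathcal{I})$ inside a single high-probability event.
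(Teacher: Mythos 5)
Your proposal is correct and follows essentially the same route as the paper: the Donsker--Varadhan duality on the extended space $\Theta\times\mathcal{X}^{n}\times\mathcal{I}$ with reference measure $(\pi_{\theta}\pi)\otimes\nu$, Fubini plus the pointwise-in-$\lambda$ use of Assumption A1 to bound the exponential moment by $1$, Markov's inequality, and a symmetry-plus-union-bound step (the paper absorbs the $-f(n,\lambda)$ shift into a function $\eta(\lambda)$, which is only a cosmetic difference). No gaps.
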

\begin{proof}
Recall the starting point of the proof of Lemma \ref{eq:emplemma},
and use as before equations \ref{eq:var1}, \ref{eq:var2}, Markov's
inequality and Assumption A\ref{assu:Hoeffding},

\begin{align*}
\mathbb{P}\left(\sup_{\mu\in\mathcal{M}_{1}^{+}(\Theta\times\mathcal{X}^{n}\times\mathcal{I})}\left\{ \mu\left\{ \lambda(D_{n}^{S}-\mathbb{P}D_{n}^{S})+\eta(\lambda)\right\} -\mathcal{K}\left(\mu,\left[\pi_{\theta}\pi\right]\otimes\nu\right)\right\} \geq0\right) & =\mathbb{P}\left(\log\nu\otimes m_{X}\left[e^{\lambda(D_{n}^{S}-\mathbb{P}D_{n}^{S})+\eta}\right]\geq0\right)\\
 & \leq\nu\otimes m_{X}\mathbb{\otimes P}\left[e^{\lambda(D_{n}^{S}-\mathbb{P}D_{n}^{S})+\eta(\lambda)}\right]\\
 & \leq\nu\left[e^{f(n,\lambda)+\eta(\lambda)}\right]
\end{align*}

by choosing $\eta(\lambda)=-\log\frac{1}{\epsilon}-f(n,\lambda)$
we get, 
\[
\mathbb{P}\left(\sup_{\mu\in\mathcal{M}_{1}^{+}(\Theta\times\mathcal{X}^{n}\times\mathcal{I})}\left\{ \mu\left\{ \lambda(D_{n}^{S}-\mathbb{P}D_{n}^{S})-f(n,\lambda)\right\} -\mathcal{K}\left(\mu,\left[\pi_{\theta}\pi\right]\otimes\nu\right)\right\} \geq\log\frac{1}{\epsilon}\right)\leq\epsilon.
\]

Hence for any $\mu\in\mathcal{M}_{1}^{+}(\Theta\times\mathcal{X}^{n}\times\mathcal{I})$
with probability at least $1-\epsilon$, 

\[
\mu\left\{ \lambda(D_{n}^{S}-\mathbb{P}D_{n}^{S})\right\} \leq\mu\left[f(n,\lambda)\right]+\mathcal{K}\left(\mu,\left[\pi_{\theta}\pi\right]\otimes\nu\right)+\log\frac{1}{\epsilon}.
\]

By symmetry we get for any $\mu\in\mathcal{M}_{1}^{+}(\Theta\times\mathcal{X}^{n}\times\mathcal{I})$
with probability at least $1-\epsilon$,

\[
\mu\left\{ \lambda(\mathbb{P}D_{n}^{S}-D_{n}^{S})\right\} \leq\mu\left[f(n,\lambda)\right]+\mathcal{K}\left(\mu,\left[\pi_{\theta}\pi\right]\otimes\nu\right)+\log\frac{1}{\epsilon}.
\]
 We conclude using a union bound.
\end{proof}
In what follows we will restrict ourselves on a specific kind of factorizable
measures that allow us to further perform calculations. We let $\mu=\rho\otimes\xi\in\mathcal{M}_{1}^{+}(\Theta\times\mathcal{X}^{n})\times\mathcal{M}_{1}^{+}(\mathcal{I})\subset\mathcal{M}_{1}^{+}(\Theta\times\mathcal{X}^{n}\times\mathcal{I})$,
and deduce from lemma \ref{lem:unionbound-adap} the following result,
\begin{lem}
Under Assumption A\ref{assu:Hoeffding} for any $\rho\in\mathcal{M}_{1}^{+}(\Theta\times\mathcal{X}^{n})$,
$\xi\in\mathcal{M}_{1}^{+}(\mathcal{I})$, $n\geq1$ and $\epsilon>0$
jointly with probability at least $1-\epsilon$,
\[
\rho_{\hat{\xi}(\lambda)}\left[\mathbb{P}D_{n}^{S}\right]\leq\inf_{\xi\in\mathcal{F}}\left\{ \inf_{\rho\in\mathcal{M}_{1}^{+}(\Theta\times\mathcal{X}^{n})}\left\{ \rho(\mathbb{P}D_{n}^{S})+\frac{2}{\xi(\lambda)}\mathcal{K}\left(\rho,\pi_{\theta}\pi\right)\right\} +\frac{2}{\xi(\lambda)}\left(\xi\left[f(n,\lambda)\right]+\mathcal{K}\left(\xi,\nu\right)+\log\frac{2}{\epsilon}\right)\right\} .
\]
\end{lem}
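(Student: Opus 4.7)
The plan is to apply Lemma \ref{lem:unionbound-adap} to product measures $\mu = \rho \otimes \xi$ with $\rho \in \mathcal{M}_1^+(\Theta \times \mathcal{X}^n)$ and $\xi \in \mathcal{F} \subset \mathcal{M}_1^+(\mathcal{I})$, and then to exploit two variational facts in sequence: (i) the AdABC posterior $\varrho_{\epsilon,\hat\xi(\lambda)}$ is the minimizer of the free energy $\rho \mapsto \hat\xi(\lambda)\rho(D_n^S) + \mathcal{K}(\rho,\pi_\theta\pi)$ via equations (\ref{eq:var1})--(\ref{eq:var2}), and (ii) by construction $\hat\xi$ is the minimizer over $\mathcal{F}$ of the empirical bound appearing in Definition \ref{def:adap-ABC}.

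First I would note two simplifications available for product measures. Since $f(n,\lambda)$ does not depend on $(\theta,X^n)$ and the Kullback--Leibler divergence tensorizes, one has $\mu[f(n,\lambda)] = \xi[f(n,\lambda)]$ and $\mathcal{K}(\rho\otimes\xi,\pi_\theta\pi\otimes\nu) = \mathcal{K}(\rho,\pi_\theta\pi) + \mathcal{K}(\xi,\nu)$. Substituting these identities into the two symmetric bounds of Lemma \ref{lem:unionbound-adap} yields, jointly with probability at least $1-\epsilon$ and uniformly in $(\rho,\xi)$,
\[
\bigl|\,\xi(\lambda)\,\rho[\mathbb{P}D_n^S - D_n^S]\,\bigr| \leq \xi[f(n,\lambda)] + \mathcal{K}(\rho,\pi_\theta\pi) + \mathcal{K}(\xi,\nu) + \log(2/\epsilon).
\]

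Next I would specialize the upper direction to $\rho = \varrho_{\epsilon,\hat\xi(\lambda)}$ and $\xi = \hat\xi$ and divide by $\hat\xi(\lambda)$. The term $\varrho_{\epsilon,\hat\xi(\lambda)}(D_n^S) + \hat\xi(\lambda)^{-1}\mathcal{K}(\varrho_{\epsilon,\hat\xi(\lambda)},\pi_\theta\pi)$ appearing on the right collapses, by (\ref{eq:var2}) and fact (i), to $-\hat\xi(\lambda)^{-1}\log Z_{\hat\xi(\lambda),\pi}$. Fact (ii) then permits passing to the infimum over $\mathcal{F}$:
\[
\varrho_{\epsilon,\hat\xi(\lambda)}[\mathbb{P}D_n^S] \leq \inf_{\xi\in\mathcal{F}}\left\{-\frac{\log Z_{\xi(\lambda),\pi}}{\xi(\lambda)} + \frac{\xi[f(n,\lambda)] + \mathcal{K}(\xi,\nu) + \log(2/\epsilon)}{\xi(\lambda)}\right\}.
\]

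To recover an expression in terms of $\rho(\mathbb{P}D_n^S)$, I would invoke (\ref{eq:var2}) a second time to rewrite $-\log Z_{\xi(\lambda),\pi}/\xi(\lambda)$ as $\inf_\rho\{\rho(D_n^S) + \xi(\lambda)^{-1}\mathcal{K}(\rho,\pi_\theta\pi)\}$, and then apply the \emph{lower} direction of the displayed concentration inequality to each such $\rho$ paired with the same $\xi$. Replacing $\rho(D_n^S)$ by $\rho(\mathbb{P}D_n^S)$ incurs one extra KL penalty, which consolidates with the existing one into the factor $2/\xi(\lambda)$ in front of both the KL and the residual terms, yielding exactly the stated bound. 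The main obstacle is purely bookkeeping: both inequality directions and both variational substitutions must hold on the same probability-one event, which is guaranteed because Lemma \ref{lem:unionbound-adap} is stated uniformly over all product measures $\rho\otimes\xi$, so a single union bound (which accounts for the $2/\epsilon$ in place of $1/\epsilon$) suffices.
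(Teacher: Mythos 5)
Your proof is correct and follows essentially the same route as the paper: restrict Lemma \ref{lem:unionbound-adap} to product measures $\rho\otimes\xi$, use the tensorization of the Kullback--Leibler divergence, identify the $\rho$-infimum with $-\log Z_{\xi(\lambda)}/\xi(\lambda)$ via the variational identities (\ref{eq:var1})--(\ref{eq:var2}) together with the defining minimization of $\hat\xi$ over $\mathcal{F}$, and then apply the symmetric direction of the concentration bound to trade $\rho(D_n^S)$ for $\rho(\mathbb{P}D_n^S)$, which produces the factor $2/\xi(\lambda)$. The only difference is presentational (you specialize to $\hat\xi$ first and then pass to the infimum, whereas the paper takes the infimum first and identifies its minimizer), and the single-event bookkeeping you flag is indeed handled by the uniformity of Lemma \ref{lem:unionbound-adap} over all $\mu$.
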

\begin{proof}
We get by directly restraining the results of lemma \ref{lem:unionbound-adap}
to the factorizable measure jointly with probability at least $1-\epsilon$,

\[
\left.\begin{array}{c}
\xi\left(\lambda\right)\rho(\mathbb{P}D_{n}^{S}-D_{n}^{S})\\
\xi\left(\lambda\right)\rho(D_{n}^{S}-\mathbb{P}D_{n}^{S})
\end{array}\right\} \leq\xi\left[f(n,\lambda)\right]+\mathcal{K}\left(\rho,\pi_{\theta}\pi\right)+\mathcal{K}\left(\xi,\nu\right)+\log\frac{2}{\epsilon}.
\]

We rewrite the first part of the equation for any $\rho\in\mathcal{M}_{1}^{+}(\Theta\times\mathcal{X}^{n})$,
$\xi\in\mathcal{M}_{1}^{+}(\mathcal{I})$
\[
\rho\left[\mathbb{P}D_{n}^{S}\right]\leq\rho(D_{n}^{S})+\frac{1}{\xi(\lambda)}\left(\xi\left[f(n,\lambda)\right]+\mathcal{K}\left(\rho,\pi_{\theta}\pi\right)+\mathcal{K}\left(\xi,\nu\right)+\log\frac{2}{\epsilon}\right).
\]

One can take the best bound amongst possible measures,

\[
\rho_{\tilde{\xi}(\lambda)}\left[\mathbb{P}D_{n}^{S}\right]\leq\inf_{\xi\in\mathcal{M}_{1}^{+}(\mathcal{I})}\left\{ \inf_{\rho\in\mathcal{M}_{1}^{+}(\Theta\times\mathcal{X}^{n})}\left\{ \rho(D_{n}^{S})+\frac{1}{\xi(\lambda)}\mathcal{K}\left(\rho,\pi_{\theta}\pi\right)\right\} +\frac{1}{\xi(\lambda)}\left(\xi\left[f(n,\lambda)\right]+\mathcal{K}\left(\xi,\nu\right)+\log\frac{2}{\epsilon}\right)\right\} .
\]
 Note that the two infimum are computed sequentially, starting with
$\rho$, leading to a problem that can be computed for fixed $\xi$.
Then the infimum in $\xi$.

The infimum over $\rho$ is achieved in the exponential weight measure
with inverse temperature $\xi(\lambda)$ at $-\log Z_{\xi(\lambda)}$
by equations \ref{eq:var1} and \ref{eq:var2}. The measure $\tilde{\xi}$
is the minimum achieved by $-\log Z_{\xi(\lambda)}+\frac{1}{\xi(\lambda)}\left(\xi\left[f(n,\lambda)\right]+\mathcal{K}\left(\xi,\nu\right)+\log\frac{2}{\epsilon}\right)$.
By restricting the minimization to a specific class of probabilities
$\mathcal{F}$ we therefore get the algorithm described in definition
\ref{def:adap-ABC}, we plug-in the second part of lemma \ref{lem:unionbound-adap}
we get the result.
\end{proof}
As for the proof of lemma \ref{lem:bound} we use Assumption A\ref{assu:distance}
and Jensen's inequality to get with probability at least $1-\epsilon$,
\[
d(\rho_{\hat{\xi}(\lambda)}S,\mathbb{P}S)\leq\rho_{\hat{\xi}(\lambda)}\left[\mathbb{P}D_{n}^{S}\right]\leq\inf_{\xi\in\mathcal{F}}\left\{ \inf_{\rho\in\mathcal{M}_{1}^{+}(\Theta\times\mathcal{X}^{n})}\left\{ \rho(\mathbb{P}D_{n}^{S})+\frac{2}{\xi(\lambda)}\mathcal{K}\left(\rho,\pi_{\theta}\pi\right)\right\} +\frac{2}{\xi(\lambda)}\left(\xi\left[f(n,\lambda)\right]+\mathcal{K}\left(\xi,\nu\right)+\log\frac{2}{\epsilon}\right)\right\} .
\]
Again we put 
\[
\varrho_{\theta^{\star},\delta}(dX^{n},d\theta)=\frac{\pi_{\theta}(dX^{n})\pi(d\theta)\textbf{1}_{\Vert\theta-\theta^{\star}\Vert<\delta}}{\pi(\Vert\theta-\theta^{\star}\Vert<\delta)}.
\]
The rest of the proof goes along the line of the end of the proof
of \ref{lem:bound}.

\subsection{Proof of Section \ref{sec:Examples-of-bounds}}

\subsubsection{Proof of Lemma \ref{lem:McDiarmid1}}

To prove the result we will start be defining a vector of observations
with the $i$-th component replaced by $Y^{\prime},$ i.e. $Y^{n,\prime}=(Y_{1},\cdots,Y_{i-1},Y^{\prime},Y_{i},\cdots,Y_{n}).$
For this new sample we have by the triangle inequality 
\[
\left|d\left(S\left(Y^{n}\right),S\left(X^{n}\right)\right)-d\left(S\left(Y^{n,\prime}\right),S\left(X^{n}\right)\right)\right|\leq d\left(S\left(Y^{n,\prime}\right),S\left(Y^{n}\right)\right).
\]
 By the definition of $Y^{n,\prime}$ and $d$ the term on the right
hand-side is $d\left(S\left(Y^{n,\prime}\right),S\left(Y^{n}\right)\right)=\frac{1}{n}\left\Vert H(Y_{i})-H(Y^{\prime})\right\Vert _{p}\leq\frac{2m^{\frac{1}{p}}K}{n}$.
By the bounded difference inequality theorem 6.2 \cite{Boucheron2013}
we get the desired result

\subsubsection{Proof of Theorem \ref{thm:euclidean}}

We can apply the lemma \ref{lem:bound} to the framework of this section
then with probability at least $1-\epsilon$, 

\begin{multline*}
d(\varrho_{\lambda}^{ABC}(S),\mathbb{P}(H))\leq\inf_{\theta\in\Theta}d(\pi_{\theta}(H),\mathbb{P}(H))+\sup_{\theta:\Vert\theta -\theta^\star\Vert\leq \delta}\pi_{\theta}\left\{ d(S,\pi_{\theta}S)\right\} +\delta L+\mathbb{P}\left\{ d(S,\mathbb{P}S)\right\} \\
+\frac{2\lambda K^{2}m^{\frac{2}{p}}}{n}-\frac{2}{\lambda}\log\pi\left(\left\{ \Vert\theta-\theta^{\star}\Vert<\delta\right\} \right)+\frac{2}{\lambda}\log\frac{2}{\epsilon}.
\end{multline*}
We need to control $\mathbb{P}\left\{ d(S,\mathbb{P}S)\right\} $
and $\pi_{\theta^{\star}}\left\{ d(S,\pi_{\theta^{\star}}S)\right\} $,
we use Jensen for the first inequality, on the third line we use Nemirovki's
inequality (see \cite{Boucheron2013} p. 335), 
\begin{align*}
\mathbb{P}\left\{ d(S,\mathbb{P}S)\right\}  & =\mathbb{P}\left\{ \left\Vert \frac{1}{n}\sum_{i=1}^{n}H(Y_{i})-\mathbb{P}H\right\Vert _{p}\right\} \\
 & \leq\sqrt{\frac{1}{n^{2}}\mathbb{P}\left\{ \left\Vert \sum_{i=1}^{n}H(Y_{i})-\mathbb{P}H\right\Vert _{p}^{2}\right\} }\\
 & \leq\sqrt{\frac{K(p,m)}{n^{2}}\sum_{i=1}^{n}\mathbb{P}\left\{ \left\Vert H(Y_{i})-\mathbb{P}H\right\Vert _{p}^{2}\right\} },
\end{align*}
 where $K(p,m)=\min\left(m,p-1,2e\log m\right).$ Using the i.i.d.
hypothesis and the definition of $C$ we get,

\begin{align*}
\mathbb{P}\left\{ d(S,\mathbb{P}S)\right\}  & \leq\sqrt{\frac{K(p,m)}{n}\mathbb{P}\left\{ \left\Vert H(Y_{i})-\mathbb{P}H\right\Vert _{p}^{2}\right\} }\\
 & \leq\sqrt{\frac{K(p,m)m^{\frac{2}{p}}}{n}m^{2}\max_{j\leq m}\mathbb{P}\left\{ \left(h_{j}-\mathbb{P}h_{j}\right)^{2}\right\} }\\
 & \leq\frac{Cm^{\frac{1}{p}+1}}{\sqrt{n}}
\end{align*}

We get a similar bound for $\pi_{\theta}$, $\pi_{\theta}\left\{ d(S,\pi_{\theta}S)\right\} \leq\frac{Cm^{\frac{1}{p}+1}}{\sqrt{n}}$.

\subsubsection{Proof of Corollary \ref{cor:euclidean-Gaussian}}

We start by lower bounding the small probability,

\begin{align*}
\log\pi\left(\left\{ \Vert\theta-\theta^{\star}\Vert_{2}<\delta\right\} \right) & =\log\pi\left(\left\{ \theta:\sum_{i=1}^{d}(\theta_{i}-\theta_{i}^{\star})^{2}\leq\delta^{2}\right\} \right)\\
 & \geq d\min_{i}\log\pi\left(\left\{ \theta:(\theta-\theta_{i}^{\star})^{2}\leq\frac{\delta^{2}}{d}\right\} \right)\\
 & \geq d\min_{i}\log\int_{\frac{\theta_{i}^{\star}}{\sqrt{\vartheta}}-\frac{\delta}{\sqrt{\vartheta d}}}^{\frac{\theta_{i}^{\star}}{\sqrt{\vartheta}}+\frac{\delta}{\sqrt{\vartheta d}}}\Phi_{1}(dx;0,1)\\
 & \geq d\min_{i}\log\left(\frac{\delta}{2\sqrt{\vartheta d}}\varphi\left(\frac{\theta_{i}^{\star}}{\sqrt{\vartheta}}+\frac{\delta}{\sqrt{\vartheta d}}\right)\right)\\
 & =d\log\left(\frac{\delta}{2\sqrt{2\pi\vartheta d}}\exp\left[-\frac{1}{2}\left(\frac{1}{\sqrt{\vartheta}}+\frac{\delta}{\sqrt{\vartheta d}}\right)^{2}\right]\right)\\
 & \geq d\log\left\{ \frac{\delta}{2\sqrt{2\pi\vartheta d}}\exp\left(-\frac{1}{\vartheta}-\frac{\delta^{2}}{\vartheta d}\right)\right\} 
\end{align*}
 The bound can therefore be written, 
\begin{multline*}
\Vert\varrho_{\lambda}^{ABC}(H)-\mathbb{P}(H)\Vert_{p}\leq\inf_{\theta\in\Theta}\Vert\pi_{\theta}(H)-\mathbb{P}(H)\Vert_{p}+\frac{Cm^{\frac{1}{p}+1}}{\sqrt{n}}+L\delta+\frac{2\lambda K^{2}m^{\frac{2}{p}}}{n}+\frac{2d}{\lambda}\left\{ \log\frac{2\sqrt{2\pi\vartheta d}}{\delta}+\frac{1}{\vartheta}+\frac{\delta^{2}}{\vartheta d}\right\} +\frac{2}{\lambda}\log\frac{2}{\epsilon}.
\end{multline*}
 We get the result by plugging$\lambda=\sqrt{\frac{dn}{K^{2}m^{\frac{2}{p}}}}$
and $\delta=\sqrt{\frac{\vartheta}{n}}$. 

\subsubsection{Proof of Corollary \ref{cor:adap-gaussian } }

We start by the result of Theorem \ref{thm:Adap}, 
\begin{multline*}
\left\Vert \varrho_{\epsilon,\hat{\xi}}(H)-\mathbb{P}H\right\Vert _{p}\leq\inf_{\theta\in\Theta}\left\Vert \pi_{\theta}(S)-\mathbb{P}(S)\right\Vert _{p}+\frac{2Cm^{\frac{1}{p}}}{\sqrt{n}}e^{L\delta^{\alpha}}\\
+\inf_{\xi\in\mathcal{F}}\left[\frac{1}{\xi(\lambda)}\left\{ 2\xi\left[\lambda^{2}\right]\frac{K^{2}m^{\frac{2}{p}}}{n}-2\log\pi\left(\left\{ \Vert\theta-\theta^{\star}\Vert<\delta\right\} \right)+2\mathcal{K}(\xi,\nu)+2\log\frac{2}{\epsilon}\right\} \right],
\end{multline*}
 we need to compute the Kullback-Leibler term and the first and second
order moments under an exponential distribution. We have,

\begin{align*}
\mathcal{K}(\xi,\nu) & =\log\frac{\beta}{\alpha}+\frac{\alpha-\beta}{\beta},\\
\xi(\lambda) & =\frac{1}{\beta},\\
\xi(\lambda^{2}) & =\frac{2}{\beta^{2}}.
\end{align*}
 We plug those results in the above equation, 
\begin{multline*}
\left\Vert \varrho_{\epsilon,\hat{\xi}}(H)-\mathbb{P}H\right\Vert _{p}\leq\inf_{\theta\in\Theta}\left\Vert \pi_{\theta}(S)-\mathbb{P}(S)\right\Vert _{p}+2\frac{Cm^{\frac{1}{p}+1}}{\sqrt{n}}+L\delta\\
+\inf_{\beta\in\mathbb{R}_{+}}\left[\beta\left\{ \frac{4}{\beta^{2}}\frac{K^{2}m^{\frac{2}{p}}}{n}-2\log\pi\left(\left\{ \Vert\theta-\theta^{\star}\Vert<\delta\right\} \right)+2\log\frac{\beta}{\alpha}+2\frac{\alpha-\beta}{\beta}+2\log\frac{2}{\epsilon}\right\} \right].
\end{multline*}
 Using the same bound as in corollary \ref{cor:euclidean-Gaussian}
for the small ball under the prior we get,
\begin{multline*}
\left\Vert \varrho_{\epsilon,\hat{\xi}}(H)-\mathbb{P}H\right\Vert _{p}\leq\inf_{\theta\in\Theta}\left\Vert \pi_{\theta}(S)-\mathbb{P}(S)\right\Vert _{p}+2\frac{Cm^{\frac{1}{p}+1}}{\sqrt{n}}+L\delta\\
+\inf_{\beta\in\mathbb{R}_{+}}\left[\frac{4}{\beta}\frac{K^{2}m^{\frac{2}{p}}}{n}+2d\beta\left\{ \log\frac{2\sqrt{2\pi\vartheta d}}{\delta}+\frac{1}{\vartheta}+\frac{\delta^{2}}{\vartheta d}\right\} +2\beta\log\frac{\beta}{\alpha}+2\left(\alpha-\beta\right)+2\beta\log\frac{2}{\epsilon}\right].
\end{multline*}
 Using the fact that $\alpha<\beta$ otherwise the Kullback-Leibler
does not exist, and putting $\beta=\sqrt{\frac{K^{2}m^{\frac{2}{p}}}{nd}}$,
we get 

\begin{multline*}
\left\Vert \varrho_{\epsilon,\hat{\xi}}(H)-\mathbb{P}H\right\Vert _{p}\leq\inf_{\theta\in\Theta}\left\Vert \pi_{\theta}(S)-\mathbb{P}(S)\right\Vert _{p}+2\frac{Cm^{\frac{1}{p}+1}}{\sqrt{n}} +L\delta+4Km^{\frac{1}{p}}\sqrt{\frac{d}{n}}+2Km^{\frac{1}{p}}\sqrt{\frac{d}{n}}\left\{ \log\frac{2\sqrt{2\pi\vartheta d}}{\delta}+\frac{1}{\vartheta}+\frac{\delta^{2}}{\vartheta d}\right\} \\
+\frac{Km^{\frac{1}{p}}}{\sqrt{nd}}\log\left(\frac{K^{2}m^{\frac{2}{p}}}{nd\alpha}\right)+2\frac{Km^{\frac{1}{p}}}{\sqrt{nd}}\log\frac{2}{\epsilon}.
\end{multline*}
Put $\delta=\sqrt{\frac{\vartheta}{n}}$ to get the result.

\subsubsection{Proof of Lemma \ref{lem:McDiarmid-nonparam}}

As for the proof of lemma \ref{lem:McDiarmid1} define a sample $Y^{\prime,n}=\left(Y_{1},\cdots,Y^{\prime},\cdots Y_{n}\right)$
where the $i$-th sample has been replaced by $Y^{\prime}$ with the
same distribution. By the triangular inequality, 
\[
\left|d(S(X^{n}),S(Y^{n}))-d(S(X^{n}),S(Y^{n,\prime}))\right|\leq d(S(Y^{n}),S(Y^{n,\prime})),
\]
 hence in this case $d(S(Y^{n}),S(Y^{n,\prime}))=\sqrt{\frac{1}{n^{2}}(Y_{i}-Y^{\prime})}\leq\sqrt{\frac{2K}{n^{2}}}.$
Hence by applying the bounded difference inequality \cite{Boucheron2013}
we get the result.

\subsubsection{Proof of Lemma \ref{lem:small-ball}}

From Lemma 5.3. of \cite{van2008reproducing} we get that the non-centered
Gaussian small ball probability is characterized by its concentration
function, i.e. for any $f^{\star}$ in the support, 
\[
-\log\pi\left[\left\Vert f-f^{\star}\right\Vert <\delta\right]\leq\inf_{h\in\mathbb{H}:\left\Vert h-f^{\star}\right\Vert \leq\frac{\delta}{2}}\frac{1}{2}\left\Vert h\right\Vert _{\mathbb{H}}^{2}-\log\pi\left[\left\Vert f\right\Vert <\frac{\delta}{2}\right],
\]
 where $\left(\mathbb{H},\left\Vert .\right\Vert _{\mathbb{H}}\right)$
is the reproducing kernel Hilbert space of the Gaussian variable $f$.
\cite{van2007bayesian} give a bound for the two quantities in the
right hand-side, in the case where the Gaussian process has a spectral
measure with exponentially decreasing tails and $f^{\star}\in\mathcal{H}^{\beta}\left(\left[0,1\right]\right)$.
The condition on the spectral measure is satisfied in particular for
the centered Gaussian process with Gaussian kernel. Suppose that the
Gaussian process is re scaled with parameter $c\leq1$ then by Theorem
2.4 \cite{van2007bayesian} there exists $\delta_{0}>0$ and a constant
$K$ such that for any $\delta\in(0,\delta_{0})$ the centered small
ball probability satisfies,

\[
-\log\pi\left[\left\Vert f\right\Vert <2\delta\right]\leq\frac{K}{c}\left(\log\frac{1}{c\delta^{2}}\right)^{2}.
\]
 Lemma 2.2 of the same paper gives a bound for the second part of
the concentration function, under the assumption on $\pi$ and $f^{\star}$
there exist constants $D_{f^{\star}}$ and $C_{f^{\star}}$ depending
only on $f^{\star}$ such that 
\[
\inf_{h\in\mathbb{H}:\left\Vert h-f^{\star}\right\Vert \leq C_{f^{\star}}c^{\beta}}\frac{1}{2}\left\Vert h\right\Vert _{\mathbb{H}}^{2}\leq D_{f^{\star}}\left(\frac{1}{c}\right).
\]
 Choosing $c$ such that $c^{\beta}\leq\delta$, we get the result
by combining the two bounds. 

\subsubsection{Proof of Theorem \ref{thm:nonpara}}

We start with lemma \ref{lem:bound} applied to the framework of this
section, with probability at least $1-\epsilon$, 

\begin{multline*}
d(\varrho_{\lambda}^{ABC}(S),\mathbb{P}(H))\leq\inf_{\theta\in\Theta}d(\pi_{\theta}(H),\mathbb{P}(H))+\sup_{\theta:\Vert\theta-\theta^\star\Vert}\pi_{\theta}\left\{ d(S,\pi_{\theta}S)\right\}+ L\delta+\mathbb{P}\left\{ d(S,\mathbb{P}S)\right\} \\
+\frac{\lambda K}{n}-\frac{2}{\lambda}\log\pi\left(\left\{ \Vert\theta-\theta^{\star}\Vert<\delta\right\} \right)+\frac{2}{\lambda}\log\frac{2}{\epsilon}.
\end{multline*}
 We start as before to bound $\pi_{\theta}\left\{ d(S,\pi_{\theta}S)\right\} $
and $\mathbb{P}\left\{ d(S,\mathbb{P}S)\right\} $, 
\begin{align*}
\pi_{\theta}\left\{ d(S,\pi_{\theta}S)\right\}  & =\pi_{\theta}\left\{ \sqrt{\frac{1}{n^{2}}\sum_{i=1}^{n}\left[X_{i}-\pi_{\theta}\left(X_{i}\right)\right]{}^{2}}\right\} \\
 & \leq\sqrt{\pi_{\theta}\left\{ \frac{1}{n^{2}}\sum_{i=1}^{n}\left[X_{i}-\pi_{\theta}\left(X_{i}\right)\right]{}^{2}\right\} }\\
 & \leq\frac{1}{\sqrt{n}}\sup_{i}\sqrt{\mathbb{\mathbb{\pi_{\theta}}}\left\{ \left(X_{i}-\pi_{\theta}(X_{i})\right)^{2}\right\} }.
\end{align*}
 We get a similar bound for $\mathbb{P}\left\{ d(S,\mathbb{P}S)\right\} $,
hence 
\begin{multline*}
d(\varrho_{\lambda}^{ABC}(S),\mathbb{P}(H))\leq\inf_{\theta\in\Theta}d(\pi_{\theta}(H),\mathbb{P}(H))+\frac{1}{\sqrt{n}}C+\frac{\lambda K}{n}\\
-\frac{2}{\lambda}\log\pi\left(\left\{ \Vert\theta-\theta^{\star}\Vert<\delta\right\} \right)+\frac{2}{\lambda}\log\frac{2}{\epsilon}.
\end{multline*}

From lemma \ref{lem:small-ball} we get an estimate of the prior concentration,
we get that for $\delta_{n}\geq c_{n}^{\beta},$ 
\begin{multline*}
d(\varrho_{\lambda}^{ABC}(S),\mathbb{P}(H))\leq\inf_{\theta\in\Theta}d(\pi_{\theta}(H),\mathbb{P}(H))+\frac{1}{\sqrt{n}}C+L\delta+\frac{\lambda K}{n}\\
+\frac{2}{\lambda}\left[D_{0}\left(\frac{1}{c_{n}}\right)+C_{0}\frac{1}{c_{n}}\left(\log\frac{1}{c_{n}\delta_{n}}\right)^{2}\right]+\frac{2}{\lambda}\log\frac{2}{\epsilon}.
\end{multline*}

Now we put $\delta_{n}\asymp\left(\frac{\log^{2}n}{n}\right)^{\frac{\beta}{2\beta+1}}$,
$c_{n}\asymp\left(\frac{\log^{2}n}{n}\right)^{\frac{1}{2\beta+1}}$
and $\lambda_{n}\asymp n^{\frac{\beta+1}{2\beta+1}}\left(\log n\right)^{\frac{\beta}{2\beta+1}}$
to get the result for $n$ sufficiently large as to ensure $\delta_{n}<\bar{\delta}$. 

\section{Summary }

We have explored convergence results for the ABC algorithm in the
specific case of the exponential kernel. This kernel is introduced
for technical reasons, in particular because it is the solution of
a variational problem. The results in the paper suggest that ABC can
be used in a mispecified scenario (i.e. $\mathbb{P}\notin\left\{ \pi_{\theta},\theta\in\Theta\right\} $
in the notations of the paper), at the cost of choosing a larger window
in the kernel. In particular it is instructive to note that we do
not want this parameter to go to zero too fast even if it was possible
computationally. We obtain oracle inequalities for the expected statistics
under the ABC distribution. We show that they can be extended in some
cases to oracle inequalities in the parameter space. The results rely
mostly on the exponential concentration of the distance and some regularity
of the model around the oracle parameter. One could remove the need
for concentration inequalities on this problem by using the techniques
introduced in \cite{grunwald2016,mendelson2014,Alquier2017,Bhattacharya2017},
we leave this for a future study. A nice aspect of the result is that
they are given for finite sample sizes and in deviation.  

We also showed that we can obtain empirical bounds and adaptive oracle
inequalities in the bandwidth. The proposed bounds can be used to
gain intuition on the distance to choose and the size of the summary
statistics for a given problem. 

Finally we suggest some methodological improvement to the previously
known SMC-ABC of \cite{DelMoral2012}, allowing for further adaptation.
We would like to stress at this point that, although we believe that
SMC can perform well on this kind of problems, it is by no means the
only approach to sample from the pseudo distribution.

\subsection*{Acknowledgments}

I would like to warmly thank Pierre Alquier and Nicolas Chopin for
the helpful discussions and comments.

\bibliographystyle{plainnat}
\bibliography{/home/james/Dropbox/biball}

\appendix

\section{Implementation Details}

\label{sec:appendix-implementation}

We describe some building blocks of the algorithms of Section \ref{sec:Monte-Carlo-algorithm}

\begin{algorithm}
\caption{Systematic resampling\label{alg:SystResampling}}

\begin{description} \item[Input:] Normalised weights  
$W_t^j:= w_t(\theta_{t-1}^j)/\sum_{i=1}^N w_t(\theta_{t-1}^i)$. 
\item[Output:] indices $A^i\in\{1,\ldots,N\}$, for $i=1,\ldots,N$.  \item[a.] Sample $U\sim \mathcal{U}nif_{[0,1]}$. \item[b.] Compute cumulative weights as %$c^0=0$,  $C^n=\sum_{m=1}^n NW^m$.
\item[c.] Set $s\leftarrow U$, $m\leftarrow 1$.  
\item[d.] \textbf{For} $n= 1:N$ \item[]$\qquad$ \textbf{While} $C^m<s$ \textbf{do} $m\leftarrow m+1$.  \item[]$\qquad$ $A^n\leftarrow m$, and $s\leftarrow s+1$.  \item[]$\quad$ \textbf{End For} \end{description} 
\end{algorithm}

\begin{tabular}{|c|c|c|}

	\hline

	 & In common  & Tested\\

	\hline

	 Figure \ref{fig:boxplot} &  \begin{minipage}{6cm}Algorithm are all versions \\of \cite{DelMoral2012}\end{minipage} & \begin{minipage}{6cm} One algorithm with exponential weights\\ the other with uniform weigths\end{minipage}\\

	\hline
	 Figure \ref{fig:accept} & \begin{minipage}{6cm} Algorithm are all versions of \cite{DelMoral2012}\\ with exponential weights\end{minipage} &\begin{minipage}{6cm}On has adaptive choice of $M$ \\ not the other \end{minipage}\\

	\hline
	 Figure \ref{fig:MSE} &  \begin{minipage}{6cm}Algorithm are all versions of \cite{DelMoral2012} \\ with exponential weights and adaptive choice  $M$\end{minipage}&\begin{minipage}{6cm} One with fixed temperature, one with adaptive temperature and one is the algorithm of  \cite{DelMoral2012} for a benchmark\end{minipage}\\

	\hline

\end{tabular}
\end{document}